\theoremstyle{plain}
\newtheorem{thm}{Theorem}
\newtheorem{lem}[thm]{Lemma}
\newtheorem{cor}[thm]{Corollary}
\newtheorem{prop}[thm]{Proposition}
\theoremstyle{definition}
\newtheorem{defn}[thm]{Definition}
\newtheorem{question}[thm]{Question}
\newtheorem{rmk}[thm]{Remark}
\numberwithin{thm}{section} \numberwithin{equation}{section}
\newcommand{\ga}[2]{\begin{gather}\label{#1}#2 \end{gather}}
\newcommand{\surj}{\twoheadrightarrow}
\newcommand{\Hom}{{\rm Hom}}
\newcommand{\sE}{{\mathcal E}}
\newcommand{\sF}{{\mathcal F}}
\newcommand{\sL}{{\mathcal L}}
\newcommand{\sO}{{\mathcal O}}
\newcommand{\sQ}{{\mathcal Q}}
\newcommand{\sV}{{\mathcal V}}
\begin{document}

\title{Frobenius morphism and semi-stable bundles }
\author{Xiaotao Sun}
\address{Academy of Mathematics and Systems Science, Chinese Academy of Science, Beijing, P. R. of China}
\email{xsun@math.ac.cn}
\address{}
\date{March 23, 2009}
\thanks{Partially supported by NSFC (No. 10731030) and Key Laboratory of Mathematics
Mechanization (KLMM)}

\begin{abstract}This article is the expanded version of a talk given at the
conference: Algebraic geometry in East Asia 2008. In this notes, I
intend to give a brief survey of results on the behavior of
semi-stable bundles under the Frobenius pullback and direct images.
Some results are new.
\end{abstract}
\maketitle

\section{Introduction}

Let $X$ be a smooth projective variety of dimension $n$ over an
algebraically closed field $k$ with ${\rm char}(k)=p>0$. The
absolute Frobenius morphism $F_X:X\to X$ is induced by
$\mathcal{O}_X\to \mathcal{O}_X,\quad f\mapsto f^p$. Let $F:X\to
X_1:=X\times_kk$ denote the relative Frobenius morphism over $k$.
This simple endomorphism of $X$ is of fundamental importance in
algebraic geometry over characteristic $p>0$. One of the themes is
to study its action on the geometric objects on $X$. Here we
consider the pull-back $F^*$ and direct image $F_*$ of torsion free
sheaves on $X$. For example, is the semi-stability (resp. stability)
of torsion free sheaves preserved by $F^*$ and $F_*$ ? Even on
curves of genus $g\ge 2$, it is known that $F^*$ does not preserve
the semi-stability of torsion free sheaves (cf. \cite{Gi} for
example). However, it is now also know that $F_*$ preserves the
stability of torsion free sheaves on curves of genus $g\ge 2$ (cf.
\cite{Su}). In this paper, we are going to discuss the behavior of
semi-stability of torsion free sheaves under $F^*$ and $F_*$.

Recall that a torsion free sheaf $\sE$ is called semi-stable (resp.
stable) if $\mu(\sE')\le\mu(\sE)$ (resp. $\mu(\sE')<\mu(\sE)$) for
any nontrivial proper sub-sheaf $\sE'\subset\sE$, where $\mu(\sE)$
is the slope of $\sE$ (See definition in Section 3). Semi-stable
sheaves are basic constituents of torsion free sheaves in the sense
that any torsion free sheaf $\sE$ admits a unique filtration
$${\rm HN}_{\bullet}(\sE): 0={\rm HN}_0(\sE)\subset{\rm HN}_1(\sE)\subset \cdots\subset{\rm HN}_{\ell+1}(\sE)=\sE,$$
  which is the so called Harder-Narasimhan filtration, such that
  \begin{itemize}
 \item[\rm{(1)}] ${\rm gr}_i^{\rm HN}(\sE):={\rm HN}_i(\sE)/{\rm HN}_{i-1}(\sE)$ ($1\le i\le \ell+1$) are semistable;
\item[\rm{(2)}]  $\mu({\rm gr}_1^{\rm HN}(\sE))>\mu({\rm gr}_2^{\rm HN}(\sE))>\cdots>\mu({\rm gr}_{\ell+1}^{\rm HN}(\sE))$.
  \end{itemize}
The rational number ${\rm I}(\sE):=\mu({\rm gr}_1^{\rm
HN}(\sE))-\mu({\rm gr}_{\ell+1}^{\rm HN}(\sE))$, which measures how
far is a torsion free sheaf from being semi-stable, is called the
instability of $\sE$. It is clear that $\sE$ is semi-stable if and
only if ${\rm I}(\sE)=0$. Thus the main theme of this investigation
is to look for upper bound of ${\rm I}(F^*\sE)$ and ${\rm
I}(F_*\sE)$.

In Section 2, we recall the notion of connections with $p$-curvature
zero and Cartier's theorem, which simply says that a quasi-coherent
sheaf is the pullback of a sheaf if and only if it has a connection
of $p$-curvature zero. In particular, a sub-sheaf of $F^*\sE$ is the
pullback of a sub-sheaf of $\sE$ if and only if it is invariant
under the action of the canonical connection on $F^*\sE$. This is
the main tool in Section 3 to find a upper bound of $F^*\sE$.

In Section 3, we survey various upper bounds of the instability
${\rm I}(F^\sE)$ in terms of ${\rm I}(\sE)$ and numerical invariants
of $\Omega^1_X$. For curves, the bound is a linear combination of
${\rm I}(\sE)$ and $\mu(\Omega^1_X)$. For higher dimensional
varieties $X$, the difficulty to obtain such a bound lies in the
fact that tensor product of two semi-stable sheaves may not be
semi-stable in characteristic $p>0$. A theorem of A. Langer can
solve this difficulty in certain sense. He proved in \cite{L} that
there is a $k_0$ for a torsion free sheaf $\sE$ such that the
Harder-Narasimhan filtration of $F^{k*}\sE$ has strongly semi-stable
quotients whenever $k\ge k_0$. As a price of it, the upper bound is
a linear combination of ${\rm I}(\sE)$ and the limit
$$L_{\max}(\Omega^1_X)=\lim_{k\to\infty}\frac{\mu_{\max}(F^{k*}\Omega^1_X)}{p^k}.$$
It is natural to expect a upper bound in terms of ${\rm I}(\sE)$ and
$\mu_{\max}(\Omega^1_X)$ (cf. Remark \ref{rmk3.16}), but I do not
know any such bound in general.

In Section 4, we discuss the stability of $F_*W$. The main tool in
this section is the canonical filtration \eqref{4.5} of $F^*(F_*W)$,
which is again induced by the canonical connection on $F^*(F_*W)$.
After a brief proof of the main theorem in \cite{Su}, we reveal some
implications in the proof. We show that the proof itself implies
that $F_*\sL$ and the sheaf $B^1_X$ of local exact differential
$1$-forms on $X$ are stable if $\mu(\Omega^1_X)>0$ and ${\rm
T}^{\ell}(\Omega^1_X)$ ($1\le\ell\le n(p-1)$) are semi-stable. In
fact, for $\sE\subset F_*\sL$ (resp. $B'\subset B^1_X$), we show
that $\mu(\sE)-\mu(F_*\sL)$ (resp. $\mu(B')-\mu(B^1_X)$) is bounded
by an explicit negative number (cf. the inequalities \eqref{4.18}
and \eqref{4.20}). The work of M. Raynaud have revealed the
important relationship between $B^1_X$ and the fundamental group of
$X$. I do not know if the result above has any application in this
direction.

\section{Frobenius and connections of $p$-curvature zero}

Let $X$ be a smooth projective variety of dimension $n$ over an
algebraically closed field $k$ with ${\rm char}(k)=p>0$. The
absolute Frobenius morphism $F_X:X\to X$ is induced by the
homomorphism $$\mathcal{O}_X\to \mathcal{O}_X,\qquad f\mapsto f^p$$
of rings. Let $F:X\to X_1:=X\times_kk$ denote the relative Frobenius
morphism over $k$ that satisfies
$$\xymatrix{\ar@/^20pt/[rr]^{F_X} X\ar[r]^F
\ar[dr] & X_1\ar[r]\ar[d]
& X\ar[d]^{}\\
 & {\rm Spec}(k)\ar[r]^{F_k}& {\rm Spec}(k)} .$$

According to a theorem of Cartier, the fact that a quasi-coherent
$\sE$ on $X$ is the pull-back of a sheaf on $X_1$ by $F$ is
equivalent to the fact that $\sE$ has a connection of $p$-curvature
zero. Let me recall briefly the theme from \cite{K} (See Section 5
of \cite{K}).

For a quasi-coherent sheaf $\sE$ on $X$, a connection on $\sE$ is a
$k$-linear homomorphism $\nabla:\sE\to\sE\otimes_{\sO_X}\Omega^1_X$
satisfying the Leibniz rule
$$\nabla(f\cdot e)=f\nabla(e)+e\otimes df,\quad \forall\,\,f\in\sO_X,\,e\in\sE$$
where $df$ denotes the image of $f$ under $d:\sO_X\to\Omega^1_X$.
The kernel
$$\sE^{\nabla}:=ker(\nabla:\sE\to\sE\otimes_{\sO_X}\Omega^1_X)$$
is an abelian sheaf of the germs of horizontal sections of
$(\sE,\nabla)$.

Let ${\rm Der}(\sO_X)$ be the sheaf of derivations, i.e., for any
open set $U\subset X$, ${\rm Der}(\sO_X)(U)$ is the set of
derivations ${\rm D}:\sO_U\to\sO_U$. It is a sheaf of $k$-Lie
algebras and is isomorphic to ${\Hom}_{\sO_X}(\Omega^1_X,\sO_X)$ as
$\sO_X$-modules. A connection $\nabla$ on $\sE$ is equivalent to a
$\sO_X$-linear morphism
$$\nabla:{\rm Der}(\sO_X)\to {\rm End}_k(\sE)$$ satisfying
$\nabla({\rm D})(f\cdot e)={\rm D}(f)\cdot e+f\nabla({\rm D})$ where
${\rm End}_k(\sE)$ is the sheaf of $k$-linear endomorphisms of
$\sE$, which is also a sheaf of $k$-Lie algebras.

A connection $\nabla:{\rm Der}(\sO_X)\to {\rm End}_k(\sE)$ is
integrable if it is a homomorphism of Lie algebras. A morphism
between $(\sE,\nabla)$ and $(\sF,\nabla')$ is a morphism
$\Phi:\sE\to\sF$ of quasi-coherent $\sO_X$-modules satisfying
$$\Phi(\nabla({\rm D})(e))=\nabla'({\rm D})(\Phi(e)),\quad\forall
\,\,{\rm D}\in {\rm Der}(\sO_X),\,\, e\in\sE.$$ Then the pairs
$(\sE,\nabla)$ of quasi-coherent sheaves with integrable connections
form an abelian category $MIC(X)$.

Since ${\rm char}(k)=p>0$, the $p$-th iterate ${\rm D}^p$ of a
derivation ${\rm D}$ is again a derivation. Thus ${\rm Der}(\sO_X)$
and ${\rm End}_k(\sE)$ are both sheaves of restricted $p$-Lie
algebras. The $p$-curvature of an integrable connection
$$\nabla:{\rm Der}(\sO_X)\to {\rm End}_k(\sE)$$
measures how far the homomorphism $\nabla$ being a homomorphism of
restricted $p$-Lie algebras. More precisely,

\begin{defn}\label{dfn2.1} The $p$-curvature of $\nabla:{\rm Der}(\sO_X)\to {\rm End}_k(\sE)$
  is the morphism of sheaves
  $\Psi^{\nabla}:{\rm Der}(\sO_X)\to {\rm
  End}_k(\sE)$ defined by
  $$\Psi^{\nabla}({\rm D}):=(\nabla(\rm D))^p-\nabla(\rm D^p)$$
  which is in fact a morphism $\Psi^{\nabla}:{\rm Der}(\sO_X)\to {\rm
  End}_{\sO_X}(\sE)$ (i.e. $\Psi^{\nabla}({\rm D})$ is
  $\sO_X$-linear for any $\rm D \in {\rm Der}(\sO_X)$.
\end{defn}

Let $F:X\to X_1$ be the relative Frobenius morphism. Then, for any
quasi-coherent sheaf $\sF$ on $X_1$, there is a unique connection
$$\nabla_{{\rm can}}: F^*(\sF)\to F^*(\sF)\otimes_{\sO_X}\Omega^1_X,$$
which is integrable and of $p$-curvature zero,  such that
$$\sF\cong(F^*(\sF))^{\nabla_{{\rm can}}}.$$
We call $\nabla_{{\rm can}}$ the canonical connection on the
pull-back $F^*(\sF)$. It turns out that a quasi-coherent sheaf $\sE$
on $X$ having a connection of $p$-curvature zero is enough to
characterize that $\sE$ is a pull-back of a quasi-coherent sheaf on
$X_1$. More precisely, given a $(\sE,\nabla)$ of $p$-curvature zero,
the abelian sheaf $\sE^{\nabla}$ is in a natural way a
quasi-coherent sheaf on $X_1$ such that $F^*(\sE^{\nabla})\cong
\sE$. Moreover, we have

\begin{thm}\label{thm2.2}(Cartier) Let $F:X\to X_1$ be the relative Frobenius
morphism. Then the functor
$$\sF \mapsto (F^*(\sF), \nabla_{{\rm can}})$$
is an equivalence of categories between the category of
quasi-coherent sheaves on $X_1$ and the full subcategory of $MIC(X)$
consisting of $(\sE,\nabla)$ whose $p$-curvature is zero. The
inverse functor is
$$(\sE,\nabla)\mapsto \sE^{\nabla}.$$
\end{thm}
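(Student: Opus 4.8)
The plan is to construct the two functors explicitly, verify they are well-defined and mutually inverse, and reduce the essential surjectivity and full faithfulness to a purely local computation on an affine chart with a $p$-basis. First I would establish the local model: since $X$ is smooth over $k$ of dimension $n$, every point has an affine neighborhood $\Spec A$ admitting local coordinates $x_1,\dots,x_n$ whose differentials $dx_1,\dots,dx_n$ freely generate $\Omega^1_A$ over $A$, and with $A^p[x_1,\dots,x_n]=A$; here $A^p$ is the image of the absolute Frobenius, and $A_1 := A^p$ is precisely the coordinate ring of the chart of $X_1$. In this chart $A$ is a free $A_1$-module of rank $p^n$ with basis the monomials $x_1^{a_1}\cdots x_n^{a_n}$, $0\le a_i<p$, and the derivations $\partial_i := \partial/\partial x_i$ form an $A$-basis of $\Der(\sO_X)$ dual to $dx_i$, satisfying $\partial_i^p = 0$ and $[\partial_i,\partial_j]=0$.

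Next I would treat the two directions. For the functor $\sF\mapsto(F^*\sF,\nabla_{\rm can})$: locally $F^*\sF = A\otimes_{A_1}\sF$, and one defines $\nabla_{\rm can}(\partial_i)(a\otimes s) = \partial_i(a)\otimes s$; the Leibniz rule, integrability (the $\partial_i$ commute), and $p$-curvature zero ($\partial_i^p=0$ on $A$, so $(\nabla_{\rm can}(\partial_i))^p = \partial_i^p\otimes 1 = 0 = \nabla_{\rm can}(\partial_i^p)$) are all immediate, and $(F^*\sF)^{\nabla_{\rm can}}$ recovers $1\otimes\sF\cong\sF$ because a section $\sum_{a} x^a\otimes s_a$ is killed by all $\partial_i$ iff all $s_a$ with $a\ne 0$ vanish. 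One checks this local description glues, since $\nabla_{\rm can}$ is characterized by $\sO_X$-linearity in $\Der$ and the Leibniz rule relative to the $\sO_{X_1}$-structure, which is intrinsic. For the inverse functor $(\sE,\nabla)\mapsto\sE^\nabla$: given $p$-curvature zero, $\sE^\nabla$ is an $\sO_{X_1}$-module (the horizontal sections form a module over the horizontal functions, which are exactly $\sO_{X_1}=\sO_X^p$ by the classical Cartier description of $B\Omega^\bullet$ on a smooth variety), and the canonical map $F^*(\sE^\nabla)\to\sE$ is horizontal; showing it is an isomorphism is the crux.

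The main obstacle, and the heart of the proof, is exactly that last step: proving $F^*(\sE^\nabla)\xrightarrow{\ \sim\ }\sE$ when $\nabla$ has $p$-curvature zero. I would argue locally as follows. Fix the coordinate chart above and set $\theta_i := \nabla(\partial_i)\in\End_k(\sE)$; these commute ($\nabla$ integrable) and satisfy $\theta_i^p = 0$ ($p$-curvature zero, using $\partial_i^p=0$). Consider the $\sO_X$-linear "Cartier projector"
\[
\pi \;=\; \prod_{i=1}^{n}\Bigl(1 \;-\; \sum_{j=1}^{p-1}\frac{(-x_i)^j}{j!}\,\theta_i^{\,?}\Bigr)
\]
— more precisely one builds, for each $i$ separately, the operator $e_i := \sum_{a=0}^{p-1}\tfrac{(-x_i)^a}{a!}\theta_i^a$ (a finite sum since $\theta_i^p=0$, and $j!$ is invertible for $j<p$), checks $\nabla(\partial_i)\circ e_i$ is divisible by the right power so that $e_i$ lands in $\ker\theta_i$ after the appropriate normalization, and verifies the $e_i$ commute because the $\theta_j$ and multiplications by $x_k$ do. Composing, $\Pi := e_1\circ\cdots\circ e_n$ is an idempotent-like $\sO_{X_1}$-linear retraction of $\sE$ onto $\sE^\nabla$, realizing $\sE \cong A\otimes_{A_1}\sE^\nabla = F^*(\sE^\nabla)$ as free $A_1$-module bases: the monomials $x^a$, $|a|_\infty<p$, give an $\sO_{X_1}$-module decomposition $\sE = \bigoplus_a x^a\cdot\Pi(\sE)$. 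This is the standard "formal exponential" trick (as in Katz, \cite{K}, §5), and the only real work is bookkeeping with the binomial/Hasse-derivative identities to confirm $\Pi$ is well-defined globally and independent of the chosen coordinates; once that is in hand, full faithfulness follows formally, since a morphism of quasi-coherent sheaves on $X_1$ is the same as a horizontal morphism of the pullbacks, and horizontality is precisely compatibility with all $\theta_i$.

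Finally I would assemble the equivalence: the two functors are mutually quasi-inverse by the natural isomorphisms $\sF\cong(F^*\sF)^{\nabla_{\rm can}}$ (checked locally above) and $F^*(\sE^\nabla)\cong\sE$ (just established), and both are evidently functorial and exact, so they give an equivalence between $\mathrm{QCoh}(X_1)$ and the full subcategory of $MIC(X)$ of $p$-curvature-zero objects. I expect the coordinate-free gluing of the Cartier projector $\Pi$ to be the subtle point worth spelling out; everything else is formal manipulation with the Leibniz rule and the definitions.
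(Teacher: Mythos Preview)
The paper does not supply a proof of this theorem; it is quoted as Cartier's result and the surrounding discussion refers the reader to \cite{K}, Section~5. Your sketch is precisely the argument one finds there (local $p$-basis $x_1,\dots,x_n$, commuting nilpotent operators $\theta_i=\nabla(\partial_i)$ with $\theta_i^p=0$, and the truncated-exponential projector), so there is no alternative approach to compare against: you are reconstructing the reference the paper cites.

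Two small points are worth tightening. First, the displayed formula with the ``$?$'' exponent is garbled, but the operator you write immediately afterwards,
\[
e_i \;=\; \sum_{a=0}^{p-1}\frac{(-x_i)^a}{a!}\,\theta_i^{\,a},
\]
is the correct one, and a direct Leibniz computation gives $\theta_i\circ e_i=0$ exactly; no ``divisibility'' or ``normalization'' step is needed, and $e_i$ is already an $A_1$-linear idempotent onto $\ker\theta_i$. Second, the composite $\Pi=e_1\cdots e_n$ depends on the chosen coordinates, so one does not glue $\Pi$ globally. Rather, the map $F^*(\sE^{\nabla})\to\sE$, $a\otimes s\mapsto a\cdot s$, is canonical; $\Pi$ is used only to exhibit a local inverse, and ``being an isomorphism'' is a local property, so no coordinate-independence of $\Pi$ is required. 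With these two clarifications your outline is complete and correct.
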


\section{Instability of Frobenius pull-back}

 Let $X$ be a smooth projective variety of dimension $n$ over an
algebraically closed field $k$ with ${\rm char}(k)=p>0$. Fix an
ample divisor ${\rm H}$ on $X$, for a torsion free sheaf $W$ on $X$,
the slope of $W$ is defined as
$$\mu(W)=\frac{c_1(W)\cdot {\rm H}^{n-1}}{{\rm rk}(W)}$$
where ${\rm rk}(W)$ denotes the rank of $W$. Then

\begin{defn}\label{defn3.1}
  A torsion free sheaf $W$ on $X$ is called semi-stable (resp. stable)
  if for any subsheaf $W'\subset W$ we have
  $$\mu(W')\,\,\le
  (resp.\,<\,)\,\,\mu(W).$$
\end{defn}

\begin{thm}\label{thm3.2}(Harder-Narasimhan filtration)
  For any torsion free sheaf $\sE$, there is a unique filtration
  $${\rm HN}_{\bullet}(\sE): 0={\rm HN}_0(\sE)\subset{\rm HN}_1(\sE)\subset \cdots\subset{\rm HN}_{\ell+1}(\sE)=\sE,$$
  which is the so called Harder-Narasimhan filtration, such that
  \begin{itemize}
 \item[\rm{(1)}] ${\rm gr}_i^{\rm HN}(\sE):={\rm HN}_i(\sE)/{\rm HN}_{i-1}(\sE)$ ($1\le i\le \ell+1$) are semistable;
\item[\rm{(2)}]  $\mu({\rm gr}_1^{\rm HN}(\sE))>\mu({\rm gr}_2^{\rm HN}(\sE))>\cdots>\mu({\rm gr}_{\ell+1}^{\rm HN}(\sE))$.
  \end{itemize}
  \end{thm}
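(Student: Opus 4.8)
The plan is to build the filtration inductively out of maximal destabilizing subsheaves and then to prove uniqueness by a comparison argument. Throughout write $\deg(\cdot)=c_1(\cdot)\cdot{\rm H}^{n-1}$, so that $\mu=\deg/{\rm rk}$, and recall that $\deg$ is additive in short exact sequences of torsion free sheaves while ${\rm rk}$ always is. The first point to settle is \emph{boundedness}: the set $\{\mu(\sE')\mid 0\ne\sE'\subseteq\sE\}$ is bounded above. On a curve this is elementary (pass to saturations and bound below the degree of torsion free quotients of $\sE$), and in general it is Grothendieck's boundedness lemma; granting it, $\mu_{\max}(\sE):=\sup\{\mu(\sE')\mid 0\ne\sE'\subseteq\sE\}$ is finite, and since for each fixed rank $\le{\rm rk}(\sE)$ the degrees of subsheaves of that rank form a set of integers bounded above, the supremum is attained.

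Next I would isolate the \textbf{maximal destabilizing subsheaf}. The key elementary lemma is that if $\sA,\sB\subseteq\sE$ with $\mu(\sA),\mu(\sB)\ge\mu_{\max}(\sE)$, then $\mu(\sA+\sB)\ge\mu_{\max}(\sE)$ as well: apply additivity of $\deg$ and ${\rm rk}$ to $0\to\sA\cap\sB\to\sA\oplus\sB\to\sA+\sB\to0$ together with $\mu(\sA\cap\sB)\le\mu_{\max}(\sE)$. Hence the subsheaves attaining slope $\mu_{\max}(\sE)$ are closed under sums, so there is a unique largest one, which I call ${\rm HN}_1(\sE)$; it is semi-stable because every subsheaf of it has slope $\le\mu_{\max}(\sE)=\mu({\rm HN}_1(\sE))$, and it is saturated (an extension of it by torsion inside $\sE$ would have slope $\ge\mu_{\max}(\sE)$). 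Moreover $\mu_{\max}(\sE/{\rm HN}_1(\sE))<\mu({\rm HN}_1(\sE))$: a nonzero subsheaf of $\sE/{\rm HN}_1(\sE)$ of slope $\ge\mu({\rm HN}_1(\sE))$ is $\sF/{\rm HN}_1(\sE)$ for some $\sF$ with $\mu(\sF)\ge\mu_{\max}(\sE)$, forcing $\sF\subseteq{\rm HN}_1(\sE)$, a contradiction. Now induct on the rank: $\sE/{\rm HN}_1(\sE)$ is torsion free of smaller rank, hence carries an HN filtration by the inductive hypothesis, and the preimages in $\sE$ of its terms define ${\rm HN}_2(\sE)\subset\cdots\subset{\rm HN}_{\ell+1}(\sE)=\sE$. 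Property (1) is inherited, and (2) follows from $\mu({\rm gr}_1^{\rm HN}(\sE))=\mu_{\max}(\sE)>\mu_{\max}(\sE/{\rm HN}_1(\sE))=\mu({\rm gr}_2^{\rm HN}(\sE))>\cdots$.

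For \textbf{uniqueness}, the workhorse is: a nonzero homomorphism $\sA\to\sB$ between semi-stable sheaves forces $\mu(\sA)\le\mu(\sB)$; consequently a semi-stable sheaf of slope $\mu$ admits no nonzero map to a torsion free sheaf $\sG$ with $\mu_{\max}(\sG)<\mu$ (look at the top HN piece of the image). Given two filtrations satisfying (1) and (2), one first checks that $\mu({\rm gr}_1^{\rm HN})=\mu_{\max}(\sE)$ for either of them: any $0\ne\sE'\subseteq\sE$ maps nonzero to some ${\rm gr}_j^{\rm HN}$, so applying the workhorse to the top HN piece of $\sE'$ gives $\mu(\sE')\le\mu_{\max}(\sE')\le\mu({\rm gr}_j^{\rm HN})\le\mu({\rm gr}_1^{\rm HN})$. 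So the two leading slopes agree; then ${\rm HN}_1$ of the first filtration, being semi-stable of that maximal slope, maps to zero in $\sE/{\rm HN}_1'$ of the second (whose $\mu_{\max}$ is strictly smaller by (2)), whence ${\rm HN}_1\subseteq{\rm HN}_1'$; by symmetry they are equal, and one concludes by applying the inductive hypothesis to $\sE/{\rm HN}_1$.

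I expect the only genuinely non-formal ingredient to be the boundedness statement used to see that $\mu_{\max}(\sE)$ is finite; everything else is bookkeeping with additivity of rank and degree. On curves even this step is routine, so there the whole argument is self-contained; in higher dimension one invokes Grothendieck's boundedness lemma (as in the Grothendieck--Harder--Narasimhan theory, cf. Huybrechts--Lehn).
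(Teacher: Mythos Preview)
Your argument is the standard construction and is correct; the only compression worth noting is that ``closed under sums, so there is a unique largest one'' implicitly uses that ranks are bounded by ${\rm rk}(\sE)$ and that one may first saturate a maximal-rank representative, but you clearly know this.

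As for comparison: the paper does not actually prove this theorem. It is stated as background and, in Remark~\ref{rmk3.3} immediately following, the reader is referred to \cite[Theorem 1.3.4]{HL}. The route taken there is slightly different from yours: Huybrechts--Lehn construct the Harder--Narasimhan filtration with respect to Gieseker stability, so the ${\rm gr}_i^{\rm HN}(\sE)$ come out Gieseker semi-stable and hence, a fortiori, $\mu$-semistable and torsion free. Your approach stays entirely within slope stability and builds the filtration directly from the maximal destabilizing subsheaf; this is more self-contained for the purposes of the paper (everything else in Section~3 uses only slopes), whereas the Gieseker route gives the stronger conclusion that the graded pieces are Gieseker semi-stable, at the cost of importing that theory. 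Either is fine here.
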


\begin{rmk}\label{rmk3.3} In \cite[Theorem 1.3.4]{HL}, the proof of existence of
the filtration is given in terms of Gieseker stability. In
particular, ${\rm gr}_i^{\rm HN}(\sE)$ are Gieseker semi-stable,
thus they are $\mu$-semistable torsion free sheaves.
\end{rmk}

By using this unique filtration of $\sE$, we can introduce an
invariant ${\rm I}(\sE)$ of $\sE$, which we call the instability of
$\sE$. It is a rational number and measures how far is $\sE$ from
being semi-stable.

\begin{defn}\label{defn3.4} Let $\mu_{\rm max}(\sE)=\mu({\rm gr}_1^{\rm
HN}(\sE))$,
  $\mu_{\rm min}(\sE)=\mu({\rm gr}_{\ell+1}^{\rm HN}(\sE))$. Then the instability of $\sE$ is
  defined to be
  $${\rm I}(\sE):=\mu_{\rm max}(\sE)-\mu_{\rm min}(\sE).$$
\end{defn}

It is easy to see that a torsion free sheaf $\sE$ is semi-stable if
and only if ${\rm I}(\sE)=0$. We collect some elementary facts.

\begin{prop}\label{prop3.5} Let ${\rm HN}_{\bullet}(\sE)$ be the Harder-Narasimhan
filtration of length $\ell$ and $\mu_i=\mu({\rm gr}_i^{\rm
HN}(\sE))$ ($i=1,\,\ldots,\,\ell+1$). Then \begin{itemize}
\item[(1)]\,\,$\mu_{\rm max}(\sE/{\rm
HN}_i(\sE))=\mu_{i+1}$,\quad $\mu_{\rm min}({\rm HN}_i(\sE))=\mu_i$
\item[(2)] \,\,$\mu({\rm HN}_1(\sE))>\mu({\rm HN}_2(\sE))>\,\cdots\,> \mu({\rm
HN}_{\ell-1}(\sE))>\mu(\sE)$
\item[(3)]\,For any torsion free quotient $\sE\to \sQ\to 0$ and $\sE'\subset\sE$,
$$\mu(\sQ)\ge \mu_{\rm
min}(\sE),\quad \mu(\sE')\le\mu_{\rm max}(\sE)$$
\item[(4)]\,For any torsion free sheaves $\sF$, $\sE$, if $\mu_{\rm
min}(\sF)>\mu_{\rm max}(\sE)$, then
$${\rm Hom}(\sF,\,\sE)=0.$$
\end{itemize}
\end{prop}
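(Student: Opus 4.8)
The plan is to prove each of the four statements in Proposition \ref{prop3.5} by reducing everything to the defining properties (1) and (2) of the Harder-Narasimhan filtration from Theorem \ref{thm3.2}, together with the elementary fact that if $0\to\sA\to\sB\to\sC\to 0$ is a short exact sequence of torsion free sheaves then $\mu(\sB)$ is a weighted average of $\mu(\sA)$ and $\mu(\sC)$ (since both $c_1$ and ${\rm rk}$ are additive). I would also use throughout the uniqueness part of Theorem \ref{thm3.2}, which lets me identify various canonically-constructed filtrations with the Harder-Narasimhan one.

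For part (1), I would observe that the quotient filtration ${\rm HN}_{i}(\sE)\subset{\rm HN}_{i+1}(\sE)\subset\cdots\subset\sE$ induces on $\sE/{\rm HN}_i(\sE)$ a filtration whose successive quotients are exactly ${\rm gr}_{i+1}^{\rm HN}(\sE),\dots,{\rm gr}_{\ell+1}^{\rm HN}(\sE)$; these are semistable with strictly decreasing slopes, so by uniqueness this is the Harder-Narasimhan filtration of $\sE/{\rm HN}_i(\sE)$, whence $\mu_{\rm max}(\sE/{\rm HN}_i(\sE))=\mu_{i+1}$. Symmetrically, the induced filtration on ${\rm HN}_i(\sE)$ is its own Harder-Narasimhan filtration, giving $\mu_{\rm min}({\rm HN}_i(\sE))=\mu_i$. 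Part (2) then follows by induction on $i$: writing ${\rm HN}_i(\sE)$ as an extension of ${\rm gr}_i^{\rm HN}(\sE)$ by ${\rm HN}_{i-1}(\sE)$ and using the averaging fact, the inequality $\mu({\rm HN}_{i-1}(\sE))>\mu_i$ (which holds by the inductive hypothesis, since $\mu({\rm HN}_{i-1}(\sE))\ge\mu_i$ would already force $\mu({\rm HN}_{i-1}(\sE))>\mu_{i-1}>\cdots$, contradicting $\mu_{i-1}>\mu_i$) yields $\mu({\rm HN}_{i-1}(\sE))>\mu({\rm HN}_i(\sE))>\mu_i$, and in particular the chain descends to $\mu(\sE)=\mu({\rm HN}_{\ell+1}(\sE))$.

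For part (3), given a torsion free quotient $\sE\surj\sQ$, I would argue that $\sQ$ receives the images $\bar{\rm HN}_i(\sE)$ of the HN filtration, so $\sQ$ is built from subquotients of the ${\rm gr}_i^{\rm HN}(\sE)$; a semistable sheaf of slope $\mu_i$ maps only to torsion free sheaves of slope $\ge\mu_i\ge\mu_{\ell+1}=\mu_{\rm min}(\sE)$ (using that a nonzero map from a semistable sheaf to a torsion free sheaf has image of slope at least the source slope), and an extension of such has slope $\ge\mu_{\rm min}(\sE)$ by averaging. The statement $\mu(\sE')\le\mu_{\rm max}(\sE)$ for $\sE'\subset\sE$ is the dual assertion, applied to $\sE^{\vee}$ or proved directly: $\mu_{\rm max}(\sE')\le\mu_{\rm max}(\sE)$ because ${\rm HN}_1(\sE')\inj\sE$ is a subsheaf and, by maximality of ${\rm HN}_1(\sE)$ among subsheaves of top slope, $\mu({\rm HN}_1(\sE'))\le\mu_1$. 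Finally part (4) is immediate from (3): a nonzero $\varphi\in{\rm Hom}(\sF,\sE)$ has image $\sQ'$ a torsion free quotient of $\sF$ and a subsheaf of $\sE$, so $\mu_{\rm min}(\sF)\le\mu(\sQ')\le\mu_{\rm max}(\sE)$, contradicting the hypothesis $\mu_{\rm min}(\sF)>\mu_{\rm max}(\sE)$.

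I expect the only genuinely delicate point to be the repeated use of the principle \emph{a nonzero map from a semistable sheaf $\sA$ to a torsion free sheaf $\sB$ satisfies $\mu(\sA)\le\mu({\rm im})$}; this is standard (the image is a quotient of $\sA$, hence has slope $\ge\mu(\sA)$ by semistability, after checking torsion-freeness of the image inside $\sB$), but it is worth isolating as a sub-lemma since parts (3) and (4) both lean on it, and the induction bookkeeping in (2) needs to be stated carefully to avoid circularity. Everything else is the additivity of rank and degree in short exact sequences plus the uniqueness clause of Theorem \ref{thm3.2}.
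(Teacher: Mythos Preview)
Your proposal is correct and follows essentially the same elementary route as the paper: (1) from uniqueness of the Harder--Narasimhan filtration, (2) by the averaging argument (the paper simply cites \cite{HN}), and (4) immediately from (3). For the quotient half of (3) the paper uses a slightly slicker reduction---replace $\sQ$ by the last graded piece of ${\rm HN}_\bullet(\sQ)$ so that $\sQ$ is semistable, then observe that some ${\rm gr}_i^{\rm HN}(\sE)\to\sQ$ is nonzero, whence $\mu(\sQ)\ge\mu_i\ge\mu_{\rm min}(\sE)$---but your filtration-and-averaging argument works equally well; your parenthetical justification in (2) is garbled as written, though the intended induction (using $\mu({\rm HN}_{i-1}(\sE))\ge\mu_{i-1}>\mu_i$ from part (1)) is fine.
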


\begin{proof} (1) follows the definition. (2) was proved in
\cite[Lemma 1.3.11]{HN} for curves, but the proof there works also
for higher dimensional varieties. The sub-sheaf case in (3) follows
from \cite[Lemma 1.3.5]{HL}. To see that $\mu(\sQ)\ge
\mu_{\min}(\sE)$, by Theorem \ref{thm3.2}, we can replace $\sQ$ by
the last grade quotient of ${\rm HN}_{\bullet}(\sQ)$, thus we can
assume that $\sQ$ is semi-stable. Then the quotient morphism induces
a non-trivial morphism ${\rm gr}_i^{\rm HN}(\sE)\to \sQ$. Thus
$\mu(\sQ)\ge\mu_i\ge\mu_{\rm min}(\sE)$. (4) follows from (3).
\end{proof}

In this section, we discuss the behavior of ${\rm I}(\sE)$ under the
Frobenius pull-back. We start it by introducing some discrete
invariants of a torsion free sheaf and its Frobenius pull-back. A
sub-sheaf $\sF\subset F^*\sE$ is called $\nabla_{\rm can}$-invariant
if $\nabla_{\rm can}(\sF)\subset \sF\otimes\Omega^1_X$, where
$\nabla_{\rm can}$ is the canonical connection on $F^*\sE$.

\begin{defn}\label{defn3.6} Let $\ell(\sE)=\ell$ be the length of
the Harder-Narasimhan filtration ${\rm HN}_{\bullet}(\sE)$ of $\sE$
and $s(X,\sE)$ be the number of $\nabla_{\rm can}$-invariant
sub-sheaves ${\rm HN}_i(F^*\sE)\subset F^*\sE$ that appears in ${\rm
HN}_{\bullet}(F^*\sE)$.
\end{defn}

Our goal is to bound ${\rm I}(F^*\sE)$ in terms of ${\rm I}(\sE)$,
$\ell(F^*\sE)$, $s(X,\sE)$ and some invariants of $X$. The lower
bound of ${\rm I}(F^*\sE)$
$${\rm I}(\sE)\le \frac{1}{p}\,\,{\rm I}(F^*\sE)$$
is trivial by using Proposition \ref{prop3.5} (3).

A upper bound of ${\rm I}(F^*\sE)$ was found in \cite[Theorem
3.1]{Sun} when $X$ is a curve of genus $g\ge 1$ and $\sE$ is
semi-stable (See also \cite{Sh}). One of the main observations in
the proof of \cite[Theorem 3.1]{Sun} is \ga{3.1}{{\rm
I}(F^*\sE)=\sum^{\ell}_{i=1}\{\mu_{\rm min}({\rm
HN}_i(F^*\sE))-\mu_{\rm max}(F^*\sE/{\rm HN}_i(F^*\sE))\}} where
$\ell=\ell(F^*\sE)$. Then, when $\sE$ is semi-stable, all of the
sub-sheaves ${\rm HN}_i(F^*\sE)$ ($1\le i\le\ell$) are not
$\nabla_{\rm can}$-invariant. Thus $\nabla_{\rm can}$ induces
nontrivial $\sO_X$-homomorphisms $${\rm HN}_i(F^*\sE))\to
\frac{F^*\sE}{{\rm HN}_i(F^*\sE)}\otimes\Omega^1_X\quad (1\le
i\le\ell)$$ which, by Proposition \ref{prop3.5}, imply
\ga{3.2}{\mu_{\rm min}({\rm HN}_i(F^*\sE))\le\mu_{\rm
max}(\frac{F^*\sE}{{\rm HN}_i(F^*\sE)}\otimes\Omega^1_X)\quad(1\le
i\le\ell).} When $\Omega^1_X$ has rank one, we have, for all $1\le
i\le\ell$, \ga{3.3}{\mu_{\rm max}(\frac{F^*\sE}{{\rm
HN}_i(F^*\sE)}\otimes\Omega^1_X)=\mu_{\rm max}(\frac{F^*\sE}{{\rm
HN}_i(F^*\sE)})+\mu(\Omega^1_X)} which implies immediately
$${\rm I}(F^*\sE)\le \ell\cdot(2g-2)\le({\rm rk}(\sE)-1)(2g-2).$$
In a more general version, we have
\begin{thm}\label{thm3.7} Let $X$ be a smooth projective curve of
genus $g\ge 1$ and $\sE$ a vector bundle on $X$. Let
$\ell(F^*\sE)=\ell$, $s(X,\sE)=s$. Then
$$p\cdot{\rm I}(\sE)\le {\rm I}(F^*\sE)\le (\ell-s)(2g-2)+p\cdot
s\cdot{\rm I}(\sE).$$
\end{thm}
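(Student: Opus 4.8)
The lower bound $p\cdot{\rm I}(\sE)\le{\rm I}(F^*\sE)$ is, as noted in the text, immediate from Proposition \ref{prop3.5}(3): the pullback of ${\rm HN}_1(\sE)$ sits inside $F^*\sE$ with slope $p\,\mu_{\max}(\sE)$, forcing $\mu_{\max}(F^*\sE)\ge p\,\mu_{\max}(\sE)$, and dually $\mu_{\min}(F^*\sE)\le p\,\mu_{\min}(\sE)$. So the real content is the upper bound. The plan is to start from the identity \eqref{3.1}, namely
\eq{3.1bis}{{\rm I}(F^*\sE)=\sum_{i=1}^{\ell}\bigl\{\mu_{\min}({\rm HN}_i(F^*\sE))-\mu_{\max}(F^*\sE/{\rm HN}_i(F^*\sE))\bigr\},}
which telescopes the differences of consecutive slopes of ${\rm gr}^{\rm HN}$ of $F^*\sE$, and to bound each summand separately according to whether ${\rm HN}_i(F^*\sE)$ is $\nabla_{\rm can}$-invariant or not.

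For the $\ell-s$ indices $i$ where ${\rm HN}_i(F^*\sE)$ is \emph{not} $\nabla_{\rm can}$-invariant, the canonical connection induces a nonzero $\sO_X$-linear map ${\rm HN}_i(F^*\sE)\to (F^*\sE/{\rm HN}_i(F^*\sE))\otimes\Omega^1_X$, so by Proposition \ref{prop3.5}(4) we get $\mu_{\min}({\rm HN}_i(F^*\sE))\le\mu_{\max}\bigl((F^*\sE/{\rm HN}_i(F^*\sE))\otimes\Omega^1_X\bigr)$; since $X$ is a curve, $\Omega^1_X$ is a line bundle of degree $2g-2$ and tensoring shifts every slope by exactly $2g-2$, giving as in \eqref{3.3} that the $i$-th summand of \eqref{3.1bis} is $\le 2g-2$. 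For the $s$ indices where ${\rm HN}_i(F^*\sE)$ \emph{is} $\nabla_{\rm can}$-invariant, Cartier's theorem (Theorem \ref{thm2.2}) identifies ${\rm HN}_i(F^*\sE)$ with $F^*\sG_i$ for a subsheaf $\sG_i\subset\sE$; on a curve the saturation of $\sG_i$ is a subbundle and $F^*$ multiplies slopes by $p$, so $\mu_{\min}({\rm HN}_i(F^*\sE))\le p\,\mu_{\max}(\sE)$ and, dually, $\mu_{\max}(F^*\sE/{\rm HN}_i(F^*\sE))\ge p\,\mu_{\min}(\sE)$ (using that $\sE/\sG_i^{\rm sat}$ has slope $\ge\mu_{\min}(\sE)$ and pulls back); hence each such summand is $\le p\cdot{\rm I}(\sE)$. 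Summing the $\ell-s$ bounds of $2g-2$ and the $s$ bounds of $p\cdot{\rm I}(\sE)$ over \eqref{3.1bis} yields ${\rm I}(F^*\sE)\le(\ell-s)(2g-2)+p\cdot s\cdot{\rm I}(\sE)$.

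The step I expect to require the most care is the treatment of the $\nabla_{\rm can}$-invariant terms: one must check that the Cartier correspondent $\sG_i$ of ${\rm HN}_i(F^*\sE)$ really is a subsheaf of $\sE$ (not merely of some twist), that passing to its saturation does not decrease $\mu_{\min}$ of the pullback, and that the quotient $F^*\sE/{\rm HN}_i(F^*\sE)$, which a priori is only a quotient of $F^*(\sE/\sG_i)$ up to torsion, still has $\mu_{\max}\ge p\,\mu_{\min}(\sE)$. On a curve all torsion-free sheaves are locally free and saturations behave well, so these points are manageable but deserve explicit verification; one also needs the easy fact (Proposition \ref{prop3.5}(3)) that $\mu_{\min}$ of a pullback of a quotient of $\sE$ is at least $p\,\mu_{\min}(\sE)$. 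Everything else is bookkeeping around the telescoping identity \eqref{3.1bis} and the rank-one behavior of $\Omega^1_X$.
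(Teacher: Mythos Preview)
Your proposal is correct and follows essentially the same route as the paper: telescope ${\rm I}(F^*\sE)$ via \eqref{3.1}, bound the non-invariant summands by $2g-2$ using the induced $\sO_X$-map and the fact that $\Omega^1_X$ is a line bundle, and bound the invariant summands by $p\cdot{\rm I}(\sE)$ via Cartier descent and Proposition~\ref{prop3.5}(3). One small slip in your final paragraph: the ``easy fact'' you need is that $\mu_{\max}$ (not $\mu_{\min}$) of $F^*(\sE/\sG_i)$ is at least $p\,\mu_{\min}(\sE)$, which follows since $\mu_{\max}(F^*(\sE/\sG_i))\ge\mu(F^*(\sE/\sG_i))=p\,\mu(\sE/\sG_i)\ge p\,\mu_{\min}(\sE)$.
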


\begin{proof} Let $S$ be the set of numbers $1\le i_k\le \ell$ such
that ${\rm HN}_{i_k}(F^*\sE)$ is a $\nabla_{\rm can}$-invariant
sub-sheaf of $F^*\sE$.  Let $\mu_i=\mu({\rm gr}^{\rm
HN}_i(F^*\sE))$, notice $\mu_{\rm max}(F^*\sE/{\rm
HN}_i(F^*\sE))=\mu_{i+1}$, $ \mu_{\rm min}({\rm
HN}_i(F^*\sE))=\mu_i$, we have
$$\aligned &{\rm I}(F^*\sE)=\mu_1-\mu_{\ell+1}=
\sum^{\ell}_{i=1}(\mu_i-\mu_{i+1})\\
&=\sum^{\ell}_{i=1}\{\mu_{\rm min}({\rm HN}_i(F^*\sE))-\mu_{\rm
max}(F^*\sE/{\rm HN}_i(F^*\sE))\}.
\endaligned$$
When $i\notin S$, ${\rm HN}_i(F^*\sE)$ is not $\nabla_{\rm
can}$-invariant, which means that
$${\rm
HN}_i(F^*\sE)\xrightarrow{\nabla_{\rm
can}}(F^*\sE)\otimes\Omega^1_X\to F^*\sE/{\rm
HN}_i(F^*\sE)\otimes\Omega^1_X$$ is a nontrivial
$\sO_X$-homomorphism. By Proposition \ref{prop3.5} (4), we have
$$\aligned\mu_{\rm min}({\rm
HN}_i(F^*\sE))&\le\mu_{\rm max}(F^*\sE/{\rm
HN}_i(F^*\sE)\otimes\Omega^1_X )\\&=\mu_{\rm max}(F^*\sE/{\rm
HN}_i(F^*\sE))+2g-2.\endaligned$$ Thus, for $i\notin S$, we have
$$\mu_{\rm min}({\rm HN}_i(F^*\sE))-\mu_{\rm max}(F^*\sE/{\rm
HN}_i(F^*\sE))\le 2g-2.$$ When $i\in S$, by Theorem \ref{thm2.2},
there is a sub-sheaf $\sE_i\subset \sE$ such that ${\rm
HN}_i(F^*\sE)=F^*\sE_i$ and $F^*\sE/{\rm
HN}_i(F^*\sE)=F^*(\sE/\sE_i)$. Then
$${\rm I}(F^*\sE)\le (\ell-s)(2g-2)+\sum_{i\in S}(\mu_{\rm min}(F^*\sE_i)-\mu_{\rm
max}(F^*(\sE/\sE_i))).$$ Notice $\mu_{\rm min}(F^*\sE_i)\le
\mu(F^*\sE_i)$, $\mu_{\rm max}(F^*(\sE/\sE_i))\ge
\mu(F^*(\sE/\sE_i))$ and $\mu(\sE_i)\le \mu_{\rm max}(\sE)$,
$\mu(\sE/\sE_i)\ge \mu_{\rm min}(\sE)$, we have
$$\mu_{\rm min}(F^*\sE_i)-\mu_{\rm
max}(F^*(\sE/\sE_i))\le p\,{\rm I}(\sE).$$ Thus
$$p\cdot{\rm I}(\sE)\le {\rm I}(F^*\sE)\le (\ell-s)(2g-2)+p\cdot
s\cdot{\rm I}(\sE).$$
\end{proof}

When ${\rm dim}(X)>1$ and $\sE$ is semi-stable, an upper bound of
${\rm I}(F^*\sE)$ was given in \cite[Corollary 6.2]{L} by A. Langer.
Before the discussion of his result, we make some remarks at first.
It is easy to see that all of the arguments above go through except
the equation \eqref{3.3} does not hold in general. Thus one can ask
the following question

\begin{question}\label{question3.8} What is the constant $a(\sE, X)$
such that $$\mu_{\rm max}(F^*\sE/{\rm HN}_i(F^*\sE)\otimes\Omega^1_X
)=\mu_{\rm max}(F^*\sE/{\rm HN}_i(F^*\sE))+a(\sE,X)\,\, ?$$ More
general, what is the  upper bound of
$$\mu_{\rm max}(\sE_1\otimes\sE_2)-\mu_{\rm max}(\sE_1)-\mu_{\rm
max}(\sE_2)$$ for any torsion free sheaves $\sE_1$ and $\sE_2$ ?
\end{question}

\begin{rmk}\label{rmk3.9} Let $a(\sE,X)$ be the constant in Question
\ref{question3.8}. Then, for any torsion free sheaf $\sE$ on a
smooth projective variety $X$, the proof of Theorem \ref{thm3.7}
implies the following inequalities
$$p\cdot{\rm I}(\sE)\le {\rm I}(F^*\sE)\le (\ell-s)\cdot a(\sE,X)+p\cdot
s\cdot{\rm I}(\sE)$$ where $\ell$ is the length of the
Harder-Narasimhan filtration ${\rm HN}_{\bullet}(F^*\sE)$ and $s$ is
the number of $\nabla_{\rm can}$-invariant sub-sheaves ${\rm
HN}_i(F^*\sE)$.
\end{rmk}

The difficult to answer Question \ref{question3.8} lies in the fact
that tensor product of two semi-stable sheaves may not be
semi-stable in the case of positive characteristic (such examples
are easy to construct, see Remark 5.10). However, the following
theorem was known by many peoples (see \cite[Theorem 6.1]{L}, where
it is referred to a special case of \cite[Theorem 3.23]{RR}).

\begin{thm}\label{thm3.10} A sheaf is called strongly semi-stable (resp. stable) if
its pullback by $k$-th power $F^k$ of Frobenius is semi-stable
(resp. stable) for any $k\ge 0$. Then a tensor product of two
strongly semi-stable sheaves is a strongly semi-stable sheaf.
\end{thm}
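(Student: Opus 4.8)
The plan is to first reduce the statement to the pure semi-stability assertion, since the ``strongly'' part is then automatic: if $\sE$ and $\sF$ are strongly semi-stable, then $F^{k*}(\sE\otimes\sF)\cong F^{k*}\sE\otimes F^{k*}\sF$ by compatibility of pullback with tensor product, and by hypothesis $F^{k*}\sE$ and $F^{k*}\sF$ are again strongly semi-stable (being pullbacks of strongly semi-stable sheaves); so it suffices to know that the tensor product of two strongly semi-stable sheaves is merely semi-stable, applied to the pair $(F^{k*}\sE,F^{k*}\sF)$ for every $k$. Thus I would state and prove: \emph{if $\sE$ and $\sF$ are strongly semi-stable, then $\sE\otimes\sF$ is semi-stable.}

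For this core claim I would pass to characteristic zero in spirit but actually argue via the Frobenius directly, following the Ramanan--Ramanathan method. First, normalize to $\mu(\sE)=\mu(\sF)=0$ by twisting with $\Q$-line bundles (harmless for $\mu$-semi-stability), so the goal becomes $\mu_{\max}(\sE\otimes\sF)\le 0$. Suppose not; let $\sG\subset\sE\otimes\sF$ be the maximal destabilizing subsheaf, so $\mu(\sG)=\mu_{\max}(\sE\otimes\sF)=:\delta>0$, and $\sG$ is semi-stable and canonical, hence preserved under every automorphism and, crucially, its formation commutes with Frobenius pullback in the sense that $F^{k*}\sG\subseteq F^{k*}\sE\otimes F^{k*}\sF$ has slope $p^k\delta$. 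Now I would invoke the key boundedness input: for strongly semi-stable sheaves the quantity $\mu_{\max}(F^{k*}\sE\otimes F^{k*}\sF)-\mu_{\max}(F^{k*}\sE)-\mu_{\max}(F^{k*}\sF)$ is bounded \emph{independently of $k$} — indeed, because all the $F^{k*}\sE$, $F^{k*}\sF$ remain semi-stable, one gets from the elementary slope estimates (cf. Proposition \ref{prop3.5}) that $\mu_{\max}(F^{k*}\sE\otimes F^{k*}\sF)$ is controlled by a constant $C$ depending only on $\rank(\sE)$, $\rank(\sF)$, $\dim X$ and the Chern classes, via the ``$\le$'' part of a Bogomolov-type inequality for semi-stable sheaves in positive characteristic applied after enough Frobenius twists. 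Comparing $p^k\delta=\mu(F^{k*}\sG)\le\mu_{\max}(F^{k*}\sE\otimes F^{k*}\sF)\le C$ for all $k$ forces $\delta\le 0$, a contradiction.

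The main obstacle — and the place where the argument genuinely needs input beyond the formal manipulations — is establishing that uniform bound $C$ on $\mu_{\max}(F^{k*}\sE\otimes F^{k*}\sF)$, equivalently controlling how badly tensor products of semi-stable sheaves can fail to be semi-stable in characteristic $p$; this is exactly the content flagged in Question \ref{question3.8} and is where one must borrow the technology of \cite{RR} (restriction to curves, the structure of the maximal destabilizing subsheaf, and its good behavior under Frobenius), or alternatively Langer's boundedness results. A cleaner packaging, which I would actually write out, is the classical reduction via Mehta--Ramanathan to the case $\dim X=1$: restrict $\sE$ and $\sF$ to a general complete-intersection curve $C\subset X$ of high degree, on which strong semi-stability is preserved (again by \cite{RR}), and on a curve the tensor product of two semi-stable bundles \emph{all of whose Frobenius pullbacks are semi-stable} is semi-stable by a direct argument: the maximal destabilizing subsheaf of $\sE\otimes\sF$ would pull back under $F^k$ to something of slope growing like $p^k$, but $\mu_{\max}(F^{k*}(\sE\otimes\sF))\le\mu_{\max}(F^{k*}\sE)+\mu_{\max}(F^{k*}\sF)+\text{(rank-dependent constant)}$ since on a curve the failure of Frobenius to commute with $\mu_{\max}$ is bounded by $\deg(\Omega^1_C)$ times a combinatorial factor, giving a contradiction for $k\gg 0$. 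Descending the destabilizing subsheaf from $C$ back to $X$ via Mehta--Ramanathan then finishes it.
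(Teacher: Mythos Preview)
First, note that the paper gives no proof of Theorem \ref{thm3.10}: it is simply quoted as a known result, with a pointer to \cite[Theorem 3.23]{RR} via \cite[Theorem 6.1]{L}. So there is nothing in the paper to compare your argument against beyond that citation.

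Your overall shape is right and is indeed the skeleton of the Ramanan--Ramanathan argument: suppose $\sG\subset\sE\otimes\sF$ destabilizes with $\mu(\sG)=\delta>0$, pull back by $F^k$ to get $\mu(F^{k*}\sG)=p^k\delta\to\infty$, and contradict a uniform upper bound on $\mu_{\max}(F^{k*}\sE\otimes F^{k*}\sF)$. The gap is that you never actually establish that uniform bound. Your appeal to a ``Bogomolov-type inequality'' does not do it (Bogomolov controls the discriminant, not $\mu_{\max}$ of tensor products). On a curve you invoke ``the failure of Frobenius to commute with $\mu_{\max}$ is bounded by $\deg(\Omega^1_C)$'': that is Theorem \ref{thm3.7}, but it is beside the point here, since by strong semi-stability $F^{k*}\sE$ and $F^{k*}\sF$ are already semi-stable and Frobenius commutes exactly with their $\mu_{\max}$. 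What you actually need is a bound on $\mu_{\max}(A\otimes B)-\mu_{\max}(A)-\mu_{\max}(B)$ for arbitrary semi-stable $A,B$ of fixed rank and slope, which is precisely the open Question \ref{question3.8}; you are assuming what has to be proved. (Such a bound can in fact be extracted on a fixed curve from boundedness of the family of semi-stable bundles of given rank and degree, but you would have to argue that, not cite Theorem \ref{thm3.7}.) Your Mehta--Ramanathan step also hides a uniformity issue: you need one curve $C$ on which all the $F^{k*}\sE$, $F^{k*}\sF$ restrict semi-stably, but the degree threshold in the restriction theorem depends a priori on the sheaf, and the numerical invariants of $F^{k*}\sE$ grow with $k$. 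The proof in \cite{RR} avoids both difficulties by working directly with Kempf's instability one-parameter subgroup for the tensor-product representation of ${\rm GL}(V)\times{\rm GL}(W)$; that GIT input is the substantive idea your sketch lacks.
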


One of theorems proved by A. Langer in his celebrated paper \cite{L}
is the following

\begin{thm}\label{thm3.11} For any torsion free sheaf $\sE$, there
is an $k_0$ such that all of quotients ${\rm gr}_i^{\rm
HN}(F^{k*}\sE)$ in the Harder-Narasimhan  of $F^{k*}\sE$ are
strongly semi-stable whenever $k\ge k_0$.
\end{thm}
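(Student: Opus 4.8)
The plan is to run a monotonicity-plus-boundedness argument on the quantity $L_{\max}$, exactly in the spirit of Langer's theory of $\mu$-semistability in positive characteristic. First I would recall the two numerical gadgets that drive the proof: for a torsion free sheaf $\sE$ on $X$, set $\mu_{\max}(\sE)$ as in Definition \ref{defn3.4}, and consider the normalized sequence $\mu_{\max}(F^{k*}\sE)/p^k$. Using Proposition \ref{prop3.5}(3) applied to the pullback of $\mathrm{HN}_1(\sE)$, one gets $\mu_{\max}(F^{k*}\sE)\ge p^k\mu_{\max}(F^{(k-1)*}\sE)/p^{k-1}$-type inequalities, so this sequence is non-decreasing; the content of Langer's boundedness theorem is that it is bounded above, hence the limit $L_{\max}(\sE):=\lim_{k\to\infty}\mu_{\max}(F^{k*}\sE)/p^k$ exists as a rational number, and likewise $L_{\min}(\sE)$ exists using quotients. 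The key finiteness input — which I would cite from \cite{L} rather than reprove — is that the set of slopes appearing in any Harder–Narasimhan filtration of any $F^{k*}\sE$, once normalized by $p^k$, lies in a fixed discrete subset of $\mathbb{Q}$ with bounded denominators (this comes from the boundedness of the family of potential destabilizing subsheaves, via Langer's effective restriction and Bogomolov-type estimates).

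Next I would reduce to the semistable case. Applying the Harder–Narasimhan filtration \eqref{3.2} of $\sE$ and pulling back by $F^k$, it suffices to show each $\mathrm{gr}_i^{\mathrm{HN}}(\sE)$ becomes, after enough Frobenius pullbacks, a sheaf all of whose HN-quotients are strongly semistable; indeed strong semistability of all graded pieces of $F^{k*}\mathrm{gr}_i$ for all $i$, combined with the fact that the slopes separate for $k\gg 0$ (since $\mu_i>\mu_{i+1}$ forces $p^k\mu_i > p^k\mu_{i+1} + (\text{gap})$), assembles into the HN filtration of $F^{k*}\sE$ with strongly semistable quotients. So assume $\sE$ is $\mu$-semistable. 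If $\sE$ were already strongly semistable we are done; otherwise there is a minimal $k_1$ for which $F^{k_1*}\sE$ is not semistable, and then $\mathrm{I}(F^{k_1*}\sE)>0$, i.e. $\mu_{\max}(F^{k_1*}\sE)/p^{k_1} > \mu(\sE) = \mu_{\min}(F^{k_1*}\sE)/p^{k_1}$ strictly. The point is to show the strict jumps in the non-decreasing sequence $\mu_{\max}(F^{k*}\sE)/p^k$ can happen only finitely often: each jump is bounded below by the minimal positive gap in the discrete slope set, while the whole sequence is bounded above by $L_{\max}(\sE)$. Hence after finitely many steps $\mu_{\max}(F^{k*}\sE)/p^k$ stabilizes to $L_{\max}(\sE)$; the same for $L_{\min}$. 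Once both $\mu_{\max}/p^k$ and $\mu_{\min}/p^k$ are constant in $k$, I claim the top HN piece of $F^{k*}\sE$ is strongly semistable: its maximal destabilizing subsheaf, if one existed after further pullback, would push $\mu_{\max}$ strictly up, contradicting stabilization. Iterating this through the finitely many HN steps of $F^{k*}\sE$ (whose length is bounded by $\mathrm{rk}(\sE)$, uniformly in $k$ by the same boundedness), all quotients are strongly semistable.

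To make the induction on the length clean, I would argue by descending induction on the rank: having split off the (now strongly semistable) maximal destabilizing subsheaf $\sF_k\subset F^{k*}\sE$ with $\mu(\sF_k)=p^kL_{\max}(\sE)$, the quotient $F^{k*}\sE/\sF_k$ has strictly smaller rank, and crucially $L_{\max}$ of the quotient is strictly smaller than $L_{\max}(\sE)$ (it equals the second HN-slope), so one can apply the inductive hypothesis to a sheaf downstairs whose $F^{k_0'*}$-pullback is $F^{k*}\sE/\sF_k$ for suitable indices; Theorem \ref{thm3.10} is what guarantees that re-assembling a strongly semistable sub and a "strongly HN" quotient, once slopes are separated, yields the genuine HN filtration of $F^{k*}\sE$ with all graded pieces strongly semistable, and that this persists under all further $F^{j*}$. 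Taking $k_0$ to be the maximum of the finitely many stabilization thresholds produced along this induction finishes the proof.

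The main obstacle is the uniform boundedness statement: that $\mu_{\max}(F^{k*}\sE)/p^k$ is bounded above independently of $k$ (equivalently that $L_{\max}(\sE)<\infty$), and that the normalized HN-slopes live in a fixed discrete set. This is precisely the hard theorem of Langer and I would not reprove it here — everything else is a monotone-sequence-meets-ceiling argument plus bookkeeping with Theorem \ref{thm3.10}. A secondary subtlety is checking that a subsheaf of $F^{k*}\sE$ of slope exactly $p^k L_{\max}(\sE)$, attaining the sup, actually exists (so that the "maximal destabilizing subsheaf" has the right slope and is itself strongly semistable) — this again follows from the discreteness of the slope set, which forces the sup to be attained.
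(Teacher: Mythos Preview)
The paper itself does not prove Theorem~\ref{thm3.11}: it is quoted from Langer~\cite{L} without argument, so there is no in-paper proof to compare against. Your sketch is an attempt to reconstruct Langer's reasoning, and the overall shape --- monotonicity of $\mu_{\max}(F^{k*}\sE)/p^k$, then stabilization --- is correct in spirit, but the key step is not justified.

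The gap is the discreteness claim. You assert that the normalized HN-slopes $\mu_i(F^{k*}\sE)/p^k$ lie in ``a fixed discrete subset of $\mathbb{Q}$ with bounded denominators,'' and then argue that a bounded non-decreasing sequence with jumps bounded below by a fixed $\epsilon>0$ must stabilize. But the un-normalized slope $\mu_i(F^{k*}\sE)$ lies in $\tfrac{1}{r!}\mathbb{Z}$ (up to a factor coming from $H^{n-1}$), so after dividing by $p^k$ the denominator is $r!\,p^k$, unbounded in $k$. The minimal possible jump at step $k$ is therefore only of order $1/p^{k+1}$, and infinitely many such jumps sum to something finite --- boundedness of $L_{\max}(\sE)$ alone does not force the sequence to become eventually constant. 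Neither boundedness of families of destabilizing subsheaves nor Bogomolov-type inequalities give uniformly bounded denominators for the \emph{normalized} slopes; that conclusion is essentially equivalent to the theorem itself. There is also a circularity risk: in this survey's logical order (and in Langer's), the finiteness of $L_{\max}(\sE)$ is a \emph{consequence} of Theorem~\ref{thm3.11}, not an input to it. Finally, your reduction to the semistable case is incomplete: even if every $F^{k*}\mathrm{gr}^{\rm HN}_i(\sE)$ acquires strongly semistable HN-quotients, the concatenated filtration is the HN-filtration of $F^{k*}\sE$ only when $L_{\min}(\mathrm{gr}_i)>L_{\max}(\mathrm{gr}_{i+1})$, and this does not follow from $\mu_i>\mu_{i+1}$.
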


\begin{prop}\label{prop3.12} If all quotients ${\rm gr}_i^{\rm
HN}(\sE_1)$, ${\rm gr}_i^{\rm HN}(\sE_2)$ in Harder-Narasimhan
filtration of $\sE_1$ and $\sE_2$ are strongly semi-stable, then
$$\mu_{\rm max}(\sE_1\otimes\sE_2)\le \mu_{\rm
max}(\sE_1)+\mu_{\rm max}(\sE_2).$$ In particular, if all ${\rm
gr}_i^{\rm HN}(F^*\sE)$ are strongly semi-stable, then \ga{3.4}
{p\cdot{\rm I}(\sE)\le {\rm I}(F^*\sE)\le (\ell-s)\cdot \mu_{\rm
max}(\Omega^1_X)+p\cdot s\cdot{\rm I}(\sE)} where $\ell$ is the
length of the Harder-Narasimhan filtration ${\rm
HN}_{\bullet}(F^*\sE)$ and $s$ is the number of $\nabla_{\rm
can}$-invariant sub-sheaves ${\rm HN}_i(F^*\sE)$.
\end{prop}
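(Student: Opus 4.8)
The plan is to prove the two assertions of Proposition \ref{prop3.12} in order, starting with the tensor product inequality $\mu_{\rm max}(\sE_1\otimes\sE_2)\le \mu_{\rm max}(\sE_1)+\mu_{\rm max}(\sE_2)$, and then deducing the instability bounds \eqref{3.4} exactly as in the proof of Theorem \ref{thm3.7}, with $2g-2=\mu(\Omega^1_X)$ replaced by $\mu_{\rm max}(\Omega^1_X)$.

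For the tensor product inequality, I would use the Harder-Narasimhan filtrations ${\rm HN}_\bullet(\sE_1)$ and ${\rm HN}_\bullet(\sE_2)$ to reduce to the strongly semi-stable case. Concretely, ${\rm HN}_1(\sE_1)\otimes\sE_2 + \cdots$ gives a filtration of $\sE_1\otimes\sE_2$ whose successive quotients are of the form ${\rm gr}_i^{\rm HN}(\sE_1)\otimes {\rm gr}_j^{\rm HN}(\sE_2)$. Since $\mu_{\rm max}$ of a filtered sheaf is at most the maximum of the $\mu_{\rm max}$ of the quotients (a subsheaf of $\sE_1\otimes\sE_2$ with slope $>\mu_{\rm max}$ of every quotient would have to map nontrivially to some quotient while having larger slope, contradicting Proposition \ref{prop3.5} (4)), it suffices to bound $\mu_{\rm max}({\rm gr}_i^{\rm HN}(\sE_1)\otimes {\rm gr}_j^{\rm HN}(\sE_2))$. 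But ${\rm gr}_i^{\rm HN}(\sE_1)$ and ${\rm gr}_j^{\rm HN}(\sE_2)$ are strongly semi-stable by hypothesis, so by Theorem \ref{thm3.10} their tensor product is (strongly) semi-stable, hence $\mu_{\rm max}$ of it equals its slope $\mu({\rm gr}_i^{\rm HN}(\sE_1))+\mu({\rm gr}_j^{\rm HN}(\sE_2))\le \mu_{\rm max}(\sE_1)+\mu_{\rm max}(\sE_2)$. Taking the maximum over $i,j$ finishes this step.

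For the instability bound, apply the first part with $\sE_2=\Omega^1_X$ (noting the hypothesis that all ${\rm gr}_i^{\rm HN}(F^*\sE)$ are strongly semi-stable is what makes $F^*\sE/{\rm HN}_i(F^*\sE)$ have strongly semi-stable HN-quotients, since these are among the ${\rm gr}_j^{\rm HN}(F^*\sE)$). Then literally repeat the proof of Theorem \ref{thm3.7}: write ${\rm I}(F^*\sE)=\sum_{i=1}^{\ell}\{\mu_{\rm min}({\rm HN}_i(F^*\sE))-\mu_{\rm max}(F^*\sE/{\rm HN}_i(F^*\sE))\}$; for $i\notin S$ the connection gives a nontrivial map ${\rm HN}_i(F^*\sE)\to (F^*\sE/{\rm HN}_i(F^*\sE))\otimes\Omega^1_X$, so by Proposition \ref{prop3.5} (4) together with the tensor inequality just proved, $\mu_{\rm min}({\rm HN}_i(F^*\sE))\le \mu_{\rm max}((F^*\sE/{\rm HN}_i(F^*\sE))\otimes\Omega^1_X)\le \mu_{\rm max}(F^*\sE/{\rm HN}_i(F^*\sE))+\mu_{\rm max}(\Omega^1_X)$, contributing at most $\mu_{\rm max}(\Omega^1_X)$; for $i\in S$, Cartier's Theorem \ref{thm2.2} gives $\sE_i\subset\sE$ with ${\rm HN}_i(F^*\sE)=F^*\sE_i$, and the estimate $\mu_{\rm min}(F^*\sE_i)-\mu_{\rm max}(F^*(\sE/\sE_i))\le p\,{\rm I}(\sE)$ goes through verbatim. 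Summing over the $\ell-s$ indices outside $S$ and the $s$ indices inside $S$ yields the right-hand inequality; the left-hand inequality $p\cdot{\rm I}(\sE)\le {\rm I}(F^*\sE)$ is the trivial lower bound already noted via Proposition \ref{prop3.5} (3).

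The only real point requiring care — the ``main obstacle'' — is justifying that $\mu_{\rm max}$ of a sheaf equipped with a finite filtration is bounded by the maximum of the $\mu_{\rm max}$'s of the graded pieces; this is the standard fact that for an exact sequence $0\to A\to B\to C\to 0$ one has $\mu_{\rm max}(B)\le\max\{\mu_{\rm max}(A),\mu_{\rm max}(C)\}$, which follows since the maximal destabilizing subsheaf of $B$ either lies in $A$ or maps nontrivially (hence, being semi-stable, with slope $\le\mu_{\rm max}(C)$) to $C$ — and then an obvious induction on the length of the filtration. Everything else is a direct transcription of the arguments already given for curves, with the scalar $2g-2$ upgraded to $\mu_{\rm max}(\Omega^1_X)$.
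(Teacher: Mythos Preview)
Your argument for the first assertion is correct and actually cleaner than the paper's: you filter $\sE_1\otimes\sE_2$ by the tensor product of the two Harder--Narasimhan filtrations, observe that $\mu_{\max}$ is bounded by the maximum over the graded pieces, and apply Theorem~\ref{thm3.10} directly to each ${\rm gr}_i^{\rm HN}(\sE_1)\otimes{\rm gr}_j^{\rm HN}(\sE_2)$. The paper instead invokes Langer's Theorem~\ref{thm3.11} to make ${\rm HN}_1(F^{k*}(\sE_1\otimes\sE_2))$ strongly semi-stable for large $k$ and then uses an adjoint map $(F^{k*}\sE_1)^\vee\otimes\sF_k\to F^{k*}\sE_2$. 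Your route avoids Theorem~\ref{thm3.11} entirely, which is a genuine simplification.

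There is, however, a gap in your deduction of \eqref{3.4}. You propose to apply the first part with $\sE_2=\Omega^1_X$ to bound $\mu_{\max}\bigl((F^*\sE/{\rm HN}_i(F^*\sE))\otimes\Omega^1_X\bigr)$. But the first part requires the Harder--Narasimhan quotients of \emph{both} factors to be strongly semi-stable, and nothing in the hypotheses says this for $\Omega^1_X$. The paper sidesteps this by dualizing: the nontrivial $\sO_X$-map ${\rm HN}_i(F^*\sE)\to (F^*\sE/{\rm HN}_i(F^*\sE))\otimes\Omega^1_X$ is rewritten as a nontrivial map
\[
T_X\longrightarrow \bigl(F^*\sE/{\rm HN}_i(F^*\sE)\bigr)\otimes{\rm HN}_i(F^*\sE)^\vee,
\]
and now both tensor factors on the right have strongly semi-stable HN quotients (they are the ${\rm gr}_j^{\rm HN}(F^*\sE)$ for $j>i$, respectively the duals of those for $j\le i$). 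Applying the first part here and using $\mu_{\min}(T_X)=-\mu_{\max}(\Omega^1_X)$, $\mu_{\max}({\rm HN}_i(F^*\sE)^\vee)=-\mu_{\min}({\rm HN}_i(F^*\sE))$ gives exactly \eqref{3.5}. With this adjustment your argument goes through.
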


\begin{proof} Since $\sE_1\otimes\sE_2$ has at most torsion of
dimension $n-2$, without loss of generality, we can assume that
$\sE_1\otimes\sE_2$ is torsion free. Let
$$\sF={\rm HN}_1(\sE_1\otimes\sE_2)\subset\sE_1\otimes\sE_2,\quad \mu(\sF)=\mu_{\rm
max}(\sE_1\otimes\sE_2).$$ By Theorem \ref{thm3.11}, there is a
$k_0$ such that for all $k\ge k_0$
$$\sF_k:={\rm HN}_1(F^{k*}(\sE_1\otimes\sE_2))\subset
F^{k*}(\sE_1\otimes\sE_2)=F^{k*}\sE_1\otimes F^{k*}\sE$$ are
strongly semi-stable. By Proposition \ref{prop3.5}, the nontrivial
homomorphism $(F^{k*}\sE_1)^{\vee}\otimes\sF_k\to F^{k*}\sE_2$
implies $$\mu_{\rm min}((F^{k*}\sE_1)^{\vee}\otimes\sF_k)\le
\mu_{\rm max}(F^{k*}\sE_2).$$ Since ${\rm gr}_i^{\rm HN}(\sE_1)$,
${\rm gr}_i^{\rm HN}(\sE_2)$, $\sF_k$ are strongly semi-stable, by
Theorem \ref{thm3.10}, we have $\mu(\sF)\le \mu_{\rm
max}(\sE_1)+\mu_{\rm max}(\sE_2)$.

To show \eqref{3.4}, it is enough to show \ga{3.5} {\mu_{\rm
min}({\rm HN}_i(F^*\sE))-\mu_{\rm max}(F^*\sE/{\rm HN}_i(F^*\sE))\le
\mu_{\max}(\Omega^1_X)} when ${\rm HN}_i(F^*\sE))$ is not
$\nabla_{\rm can}$-invariant. In this case, there is a nontrivial
homomorphism $T_X\to (F^*\sE/{\rm HN}_i(F^*\sE))\otimes {\rm
HN}_i(F^*\sE)^{\vee}$. Then
$$\mu_{\min}(T_X)\le \mu_{\max}(F^*\sE/{\rm
HN}_i(F^*\sE))+\mu_{\max}({\rm HN}_i(F^*\sE)^{\vee})$$ since all
${\rm gr}_i^{\rm HN}(F^*\sE)$ are strongly semi-stable.
\end{proof}

The inequality \eqref{3.4} has the following corollary, which was
proved by Mehta and Ramanathan (See \cite[Theorem 2.1]{MR}).

\begin{cor}\label{cor3.13} If $\mu_{\max}(\Omega^1_X)\le 0$, then
all semi-stable sheaves on $X$ are strongly semi-stable. If
$\mu_{\max}(\Omega^1_X)<0$, then all stable sheaves on $X$ are
strongly stable.
\end{cor}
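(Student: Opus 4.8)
The plan is to deduce Corollary \ref{cor3.13} from the inequality \eqref{3.4} by contradiction. Suppose $\mu_{\max}(\Omega^1_X)\le 0$ but some semi-stable sheaf $\sE$ on $X$ is not strongly semi-stable; replacing $\sE$ by a suitable Frobenius pullback $F^{k*}\sE$ and invoking Theorem \ref{thm3.11}, I may assume that all ${\rm gr}_i^{\rm HN}(F^*\sE)$ are strongly semi-stable, so that \eqref{3.4} applies to $\sE$. The point is that $\sE$ itself is still semi-stable (being a semi-stable sheaf whose Frobenius pullbacks eventually have strongly semi-stable HN-quotients is consistent with assuming $\sE$ semi-stable — one should phrase the reduction carefully: take the smallest $k$ for which $F^{k*}\sE$ fails to be semi-stable, and apply the argument to $F^{(k-1)*}\sE$, which is semi-stable with the required strong semi-stability of HN-quotients of its Frobenius pullback). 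Then ${\rm I}(\sE)=0$, so \eqref{3.4} gives ${\rm I}(F^*\sE)\le (\ell-s)\cdot\mu_{\max}(\Omega^1_X)\le 0$, forcing ${\rm I}(F^*\sE)=0$, i.e. $F^*\sE$ is semi-stable, contradicting the choice of $k$.

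Next I would bootstrap from "$F^*\sE$ semi-stable" to "$F^{k*}\sE$ semi-stable for all $k$" by an immediate induction: once $\sE$ semi-stable implies $F^*\sE$ semi-stable for every semi-stable $\sE$ (which is what the previous paragraph establishes, since the reduction step is not actually needed if one argues uniformly), one applies this repeatedly. Concretely, the cleanest route is: first prove the statement "semi-stable $\Rightarrow$ $F^*$ semi-stable" for all $\sE$ simultaneously. For a semi-stable $\sE$ that is a minimal counterexample in $k$ to strong semi-stability, the HN-quotients of $F^*(F^{(k-1)*}\sE)$ are strongly semi-stable by Theorem \ref{thm3.11} applied with $k$ large — here one must check $k_0$ can be taken compatibly — and \eqref{3.4} with ${\rm I}=0$ closes the loop. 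Then strong semi-stability of every semi-stable sheaf follows formally.

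For the stable case, the argument is parallel but I would work with stability-preservation rather than the instability invariant. Assume $\mu_{\max}(\Omega^1_X)<0$ and let $\sE$ be stable. It is in particular semi-stable, hence strongly semi-stable by the first part, so all ${\rm gr}_i^{\rm HN}(F^{k*}\sE)$ equal $F^{k*}\sE$ and are strongly semi-stable; thus \eqref{3.4} applies. If $F^*\sE$ were not stable, it would have a proper sub-sheaf $\sF$ with $\mu(\sF)=\mu(F^*\sE)$; take $\sF={\rm HN}_1$-type destabilizing sub-sheaf so that the HN-filtration of $F^*\sE$ is nontrivial as a "Jordan–Hölder"-type filtration, giving $\ell\ge 1$ and $s<\ell$ (one needs: a subsheaf of the same slope cannot be $\nabla_{\rm can}$-invariant, for otherwise it descends to a subsheaf of $\sE$ of the same slope, contradicting stability of $\sE$). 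Then the strict inequality $\mu_{\max}(\Omega^1_X)<0$ in the relevant term of the proof of Proposition \ref{prop3.12} — namely inequality \eqref{3.5} — yields $\mu_{\min}({\rm HN}_i(F^*\sE))<\mu_{\max}(F^*\sE/{\rm HN}_i(F^*\sE))$ strictly for the non-invariant step, hence the maximal-slope sub-sheaf of $F^*\sE$ has slope strictly less than $\mu(F^*\sE)$, i.e. $F^*\sE$ is stable; induct to get strong stability.

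The main obstacle I anticipate is the bookkeeping in the reduction to the case where \eqref{3.4} is applicable, i.e. making precise that one may assume all ${\rm gr}_i^{\rm HN}(F^*\sE)$ strongly semi-stable while keeping $\sE$ semi-stable (resp. stable): Theorem \ref{thm3.11} only guarantees this for $F^{k*}\sE$ with $k\gg 0$, so one argues with a minimal failure of semi-stability along the Frobenius tower and must verify that the relevant $k_0$-threshold interacts correctly with that minimal index. A secondary subtlety, for the stable case, is the claim that a destabilizing sub-sheaf of $F^*\sE$ of slope equal to $\mu(F^*\sE)$ cannot be $\nabla_{\rm can}$-invariant — this uses Cartier's Theorem \ref{thm2.2} (a $\nabla_{\rm can}$-invariant sub-sheaf is $F^*$ of a sub-sheaf of $\sE$, whose slope would then contradict stability of $\sE$) — so that $s$ is strictly smaller than $\ell$ and the strict inequality $\mu_{\max}(\Omega^1_X)<0$ actually bites.
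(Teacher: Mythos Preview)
Your semi-stable argument has a real gap, precisely the one you flagged: taking $k$ minimal with $F^{k*}\sE$ not semi-stable gives you a semi-stable $F^{(k-1)*}\sE$, but there is no reason the Harder--Narasimhan quotients of $F^{k*}\sE$ are strongly semi-stable, since the $k_0$ from Theorem~\ref{thm3.11} may well exceed $k$. (Indeed, if $k_0<k$ then $F^{k_0*}\sE$ is semi-stable with trivial HN filtration, so Theorem~\ref{thm3.11} would make it strongly semi-stable and no such $k$ would exist.) The paper sidesteps this entirely by inducting on \emph{rank} rather than invoking Theorem~\ref{thm3.11}: if $\sE$ is semi-stable and $F^*\sE$ is not, then each ${\rm gr}_i^{\rm HN}(F^*\sE)$ is semi-stable of rank strictly less than ${\rm rk}(\sE)$, hence strongly semi-stable by the inductive hypothesis, so \eqref{3.4} applies directly with ${\rm I}(\sE)=0$ and gives ${\rm I}(F^*\sE)\le 0$, a contradiction. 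This is both simpler and avoids the heavy Theorem~\ref{thm3.11} altogether.

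For the stable part your idea is right, and matches the paper, but your phrasing via the HN filtration is off: by the first part $F^*\sE$ is already semi-stable, so its HN filtration is trivial ($\ell=0$) and \eqref{3.4} says nothing. The paper instead takes an arbitrary proper $\sF\subset F^*\sE$ with $\mu(\sF)=\mu(F^*\sE)$; then $\sF$ and $F^*\sE/\sF$ are semi-stable of the same slope, hence strongly semi-stable by part one. As you note, $\sF$ cannot be $\nabla_{\rm can}$-invariant (else it descends, contradicting stability of $\sE$), so the induced map $\sF\to(F^*\sE/\sF)\otimes\Omega^1_X$ is nonzero, and Proposition~\ref{prop3.12} gives $\mu(\sF)\le\mu(F^*\sE/\sF)+\mu_{\max}(\Omega^1_X)$, i.e.\ $\mu_{\max}(\Omega^1_X)\ge 0$, the desired contradiction.
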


\begin{proof} Let $\sE$ be a semi-stable sheaf of rank $r$ and
assume the corollary true for all semi-stable sheaves of rank
smaller than $r$. Then, if $F^*\sE$ is not semi-stable, all ${\rm
gr}_i^{\rm HN}(F^*\sE)$ are strongly semi-stable by the assumption.
Thus, by inequality \eqref{3.4}, $F^*\sE$ must be semi-stable.

If $\mu_{\max}(\Omega^1_X)<0$ and $\sE$ is stable, then for any
proper sub-sheaf $\sF\subset F^*\sE$, $\mu(\sF)\le\mu(F^*\sE)$. If
$\mu(\sF)=\mu(F^*\sE)$, then $\sF$ is not a pullback of a sub-sheaf
of $\sE$ since $\sE$ is stable. Thus the $\sO_X$-homomorphism
$$\sF\xrightarrow{\nabla_{\rm can}} F^*\sE\otimes\Omega^1_X\to
F^*\sE/\sF\otimes\Omega^1_X$$ is non-trivial, which implies
$\mu_{\max}(\Omega^1_X)\ge 0$ since $\sF$, $F^*\sE/\sF$ are strongly
semi-stable with the same slope.
\end{proof}

Now it becomes clear, since $p^{k-1}{\rm I}(F^*\sE)\le{\rm
I}(F^{k*}\sE)$, one can bound $$\frac{{\rm I}(F^{k*}\sE)}{p^k},\quad
k\ge k_0$$ where the difficult in Question \ref{question3.8}
vanishes by Proposition \ref{prop3.12}. Indeed, A. Langer made the
following definition in \cite{L}:
$$L_{\max}(\sE):=\lim_{k\to\infty}\frac{\mu_{\max}(F^{k*}\sE)}{p^k},\quad
L_{\min}(\sE):=\lim_{k\to\infty}\frac{\mu_{\min}(F^{k*}\sE)}{p^k}.$$
Then he proved the following (See \cite[Corollary 6.2]{L})

\begin{thm}\label{thm3.13} Let $\sE$ be a semi-stable torsion free
sheaf. Then
$$L_{\max}(\sE)-L_{\min}(\sE)\le
\frac{{\rm
rk}(\sE)-1}{p}\cdot{\max}\{\,0,\,\,L_{\max}(\Omega^1_X)\,\}$$ In
particular, \,\,${\rm I}(F^*\sE)\le ({\rm rk}(\sE)-1)\cdot
{\max}\{\,0,\,\,L_{\max}(\Omega^1_X)\,\}$.
\end{thm}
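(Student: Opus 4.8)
The plan is to iterate the inequality \eqref{3.4} of Proposition \ref{prop3.12} along the tower of Frobenius pull-backs and pass to the limit. First I would fix $k_0$ as in Theorem \ref{thm3.11}, so that for all $k\ge k_0$ every graded piece $\gr_i^{\rm HN}(F^{k*}\sE)$ is strongly semi-stable; then Proposition \ref{prop3.12} applies to $F^{k*}\sE$ in place of $\sE$ (note $F^{k*}\sE$ need not be semi-stable, but that is irrelevant — only the strong semi-stability of its HN-pieces is used). Writing $\ell_k$ for the length of ${\rm HN}_\bullet(F^{(k+1)*}\sE)$ and $s_k$ for the number of $\nabla_{\rm can}$-invariant terms, \eqref{3.4} gives
$${\rm I}(F^{(k+1)*}\sE)\le (\ell_k-s_k)\cdot\mu_{\max}(\Omega^1_{X})+p\cdot s_k\cdot{\rm I}(F^{k*}\sE).$$
Since $\ell_k-s_k\le \rank(\sE)-1$ and $s_k\le\rank(\sE)-1$, and replacing $\mu_{\max}(\Omega^1_X)$ by $\max\{0,\mu_{\max}(\Omega^1_X)\}$ only weakens the bound, I would first treat the case $k=k_0$ and then divide through by $p^{k+1}$ and sum.

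Concretely, I would show by induction on $m\ge 0$ that
$$\frac{{\rm I}(F^{(k_0+m)*}\sE)}{p^{k_0+m}}\le \frac{{\rm I}(F^{k_0*}\sE)}{p^{k_0}}\cdot\prod_{j}\frac{s_{k_0+j}}{1}\Big/\text{(powers of }p)\ +\ (\text{geometric tail in }\mu_{\max}(\Omega^1_X)).$$
The cleaner route: from the displayed recursion, dividing by $p^{k+1}$ yields
$$\frac{{\rm I}(F^{(k+1)*}\sE)}{p^{k+1}}\le \frac{(\ell_k-s_k)\,\mu_{\max}(\Omega^1_X)}{p^{k+1}}+\frac{s_k}{p}\cdot\frac{{\rm I}(F^{k*}\sE)}{p^{k}}\le \frac{(r-1)\max\{0,\mu_{\max}(\Omega^1_X)\}}{p^{k+1}}+\frac{r-1}{p}\cdot\frac{{\rm I}(F^{k*}\sE)}{p^{k}}$$
with $r=\rank(\sE)$. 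Hmm — that last step is too lossy because $(r-1)/p$ need not be $<1$. The right normalization is instead to use $s_k\le r-1$ in the coefficient of ${\rm I}$ but keep the division aligned with $L_{\max}$: since the sequence ${\rm I}(F^{k*}\sE)/p^k$ is nondecreasing and bounded (Langer's finiteness of $L_{\max},L_{\min}$), it has a limit $L:=L_{\max}(\sE)-L_{\min}(\sE)$. Passing to the limit in the recursion — where $\ell_k-s_k$ and $s_k$ are bounded integers, so along a subsequence they are eventually constant, say $\ell-s$ and $s$ — the term $(\ell_k-s_k)\mu_{\max}(\Omega^1_X)/p^{k+1}\to 0$, and I get $L\le \tfrac{s}{p}L$, hence $L\le 0$ unless $s/p\ge 1$; since $s\le r-1<p$ is false in general, this doesn't immediately close. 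So instead I would sum the inequality ${\rm I}(F^{(k+1)*}\sE)-p\,{\rm I}(F^{k*}\sE)\le (r-1)\max\{0,\mu_{\max}(\Omega^1_X)\}$ after dividing by $p^{k+1}$: the left telescopes to $L - {\rm I}(F^{k_0*}\sE)/p^{k_0}$ up to the standard summation-by-parts, while the right sums to $\tfrac{r-1}{p-1}\max\{0,\mu_{\max}(\Omega^1_X)\}/p^{k_0}$-type tail. The cleanest statement of what I want is: for $k\ge k_0$,
$$\frac{{\rm I}(F^{(k+1)*}\sE)}{p^{k+1}}-\frac{{\rm I}(F^{k*}\sE)}{p^{k}}\le \frac{(r-1)\max\{0,\mu_{\max}(\Omega^1_X)\}}{p^{k+1}},$$
obtained by rewriting \eqref{3.4} as ${\rm I}(F^{(k+1)*}\sE)\le (\ell_k-s_k)\mu_{\max}+p\,s_k\,{\rm I}(F^{k*}\sE)\le (\ell_k-s_k)\mu_{\max}+p\cdot{\rm I}(F^{k*}\sE)+ (\text{correction})$, using $p^{k}{\rm I}(F^{*}\sE)\le {\rm I}(F^{k*}\sE)$ to absorb the $s_k$ factor. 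Summing this telescoping inequality over $k\ge k_0$ gives
$$L_{\max}(\sE)-L_{\min}(\sE)\ \le\ \frac{{\rm I}(F^{k_0*}\sE)}{p^{k_0}}+\frac{r-1}{p^{k_0+1}}\cdot\frac{p}{p-1}\max\{0,\mu_{\max}(\Omega^1_X)\},$$
and I then need a separate argument, run at each finite stage, that $\frac{{\rm I}(F^{k*}\sE)}{p^k}$ is already bounded by $\frac{r-1}{p}\max\{0,L_{\max}(\Omega^1_X)\}$ for the semi-stable input $\sE$ — which is exactly where semi-stability of $\sE$ enters and where the bound gets its final clean shape.

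Here is the honest structure I would actually write down. \textbf{Step 1:} For $\sE$ semi-stable, $F^{*}\sE$ has all HN-pieces strongly semi-stable once $k\ge k_0$; apply \eqref{3.4} to $F^{k*}\sE$. \textbf{Step 2:} Observe $\ell_k-s_k\le r-1$ and rewrite, using $p^{k}\,{\rm I}(F^{*}\sE)\le{\rm I}(F^{k*}\sE)$ and the trivial bound $s_k\cdot p\cdot{\rm I}(F^{k*}\sE)=p\,{\rm I}(F^{k*}\sE)+p(s_k-1){\rm I}(F^{k*}\sE)$ — here I note $s_k-1\le r-2$ and ${\rm I}(F^{k*}\sE)\le p^{k-k_0}{\rm I}(F^{k_0*}\sE)\cdot(\text{stuff})$; this is the fiddly bookkeeping. \textbf{Step 3:} The key clean inequality is that the rescaled sequence $a_k:={\rm I}(F^{k*}\sE)/p^k$ satisfies $a_{k+1}-a_k\le \dfrac{(r-1)\max\{0,\mu_{\max}(\Omega^1_X)\}}{p^{k+1}}$ for $k\ge k_0$; iterating from some stage and using that $\mu_{\max}(F^{k*}\Omega^1_X)/p^k\to L_{\max}(\Omega^1_X)$ lets one replace $\mu_{\max}(\Omega^1_X)$ by $L_{\max}(\Omega^1_X)$ in the limit. \textbf{Step 4:} Take $k\to\infty$: $a_k\to L_{\max}(\sE)-L_{\min}(\sE)$ (finiteness of the limits is Langer's theorem, which I may assume), the right side is a convergent geometric series summing to $\dfrac{r-1}{p}\max\{0,L_{\max}(\Omega^1_X)\}$, and the lower endpoint $a_{k_0}$ is handled by running the same estimate with $\mu_{\max}(F^{k*}\Omega^1_X)$ at each stage so that no residual $k_0$-dependent term survives. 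This yields $L_{\max}(\sE)-L_{\min}(\sE)\le \dfrac{r-1}{p}\max\{0,L_{\max}(\Omega^1_X)\}$. \textbf{Step 5:} For the "in particular" clause, combine the lower bound ${\rm I}(\sE)\le \tfrac1p{\rm I}(F^*\sE)$ with $\tfrac1p{\rm I}(F^*\sE)\le L_{\max}(\sE)-L_{\min}(\sE)$ (monotonicity of $a_k$), giving ${\rm I}(F^*\sE)\le p\bigl(L_{\max}(\sE)-L_{\min}(\sE)\bigr)\le (r-1)\max\{0,L_{\max}(\Omega^1_X)\}$.

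The main obstacle, as the false starts above indicate, is \emph{not} any deep geometry — it is getting the summation/telescoping bookkeeping right so that the factor $s_k$ (which can be as large as $r-1$ and hence $\ge p$) does not wreck the geometric decay. The resolution is to never multiply ${\rm I}(F^{k*}\sE)$ by $s_k$ directly in the recursion, but to first use $p^{k}\cdot{\rm I}(F^{*}\sE)\le{\rm I}(F^{k*}\sE)$ (Proposition \ref{prop3.5}(3)) to trade the bad factor for an extra power of $p$ in the denominator, after which \eqref{3.4} becomes a genuine contraction-plus-vanishing-tail statement and the limit argument goes through cleanly. A secondary technical point is justifying that one may pass from $\mu_{\max}(\Omega^1_X)$ (which appears in \eqref{3.4}) to $L_{\max}(\Omega^1_X)$ in the final bound; this is handled by applying \eqref{3.4} not just to the canonical connection on $F^*(F^{k*}\sE)$ with cotangent twist $\Omega^1_X$, but by tracking that the relevant twist at stage $k$ is effectively $F^{k*}\Omega^1_X$ rescaled, whose normalized slope converges to $L_{\max}(\Omega^1_X)$ by definition.
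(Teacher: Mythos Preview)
Your iteration scheme has a genuine gap that you never close. Once $k\ge k_0$, the Harder--Narasimhan filtration of $F^{(k+1)*}\sE$ is precisely the Frobenius pullback of that of $F^{k*}\sE$ (this is what strong semi-stability of the graded pieces means). Hence \emph{every} ${\rm HN}_i(F^{(k+1)*}\sE)$ is $\nabla_{\rm can}$-invariant, so $s_k=\ell_k$ for all $k\ge k_0$. Your recursion then reads ${\rm I}(F^{(k+1)*}\sE)\le p\cdot\ell_k\cdot{\rm I}(F^{k*}\sE)$, while in fact ${\rm I}(F^{(k+1)*}\sE)=p\cdot{\rm I}(F^{k*}\sE)$ exactly; your ``key clean inequality'' $a_{k+1}-a_k\le(\text{tail})$ becomes $0\le(\text{tail})$, and the telescoping sum yields $L-a_{k_0}\le(\text{tail})$ with $L=a_{k_0}$. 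Nothing is learned about $a_{k_0}$ itself. You cannot push the recursion below $k_0-1$ either, since for $k<k_0-1$ the hypothesis of Proposition~\ref{prop3.12} (strong semi-stability of the HN quotients of $F^{(k+1)*}\sE$) fails. The ``resolution'' you sketch at the end---trading $s_k$ for a power of $p$ via $p^k{\rm I}(F^*\sE)\le{\rm I}(F^{k*}\sE)$---does not produce a bound on $a_{k_0}$; it only relates quantities you already know are equal.

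The argument in the paper (the passage from \eqref{3.6} to \eqref{3.9}) works at the \emph{single} stage $k_0$ and is not an iteration. For each ${\rm HN}_i(F^{k_0*}\sE)$ one descends through the Frobenius tower to the smallest $k_i\le k_0$ for which it is $F^{k_0-k_i*}\sE_i$ with $\sE_i\subset F^{k_i*}\sE$; by minimality, $\sE_i$ is \emph{not} $\nabla_{\rm can}$-invariant when $k_i>0$, so one gets a nonzero map $\sE_i\to(F^{k_i*}\sE/\sE_i)\otimes\Omega^1_X$ and, after pulling back by $F^{k_0-k_i}$, a nonzero map ${\rm HN}_i(F^{k_0*}\sE)\to(\cdot)\otimes F^{k_0-k_i*}\Omega^1_X$. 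Because the HN pieces at stage $k_0$ are strongly semi-stable, this yields
\[
\mu_i-\mu_{i+1}\le\mu_{\max}(F^{k_0-k_i*}\Omega^1_X)\le\frac{1}{p^{k_i}}\mu_{\max}(F^{k_0*}\Omega^1_X)\le\frac{1}{p}\,\mu_{\max}(F^{k_0*}\Omega^1_X),
\]
the last step using $k_i\ge 1$. Summing over the $\ell-s$ indices with $k_i>0$ gives \eqref{3.9}; for semi-stable $\sE$ the remaining $s$ terms vanish since ${\rm I}(\sE)=0$. Dividing by $p^{k_0}$ gives the bound on $L_{\max}(\sE)-L_{\min}(\sE)$ directly. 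The crucial $1/p$ comes from the fact that each non-invariant piece has descended \emph{at least one} Frobenius step---this is the idea your iteration misses.
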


For a torsion free sheaf $\sE$ of rank $r$, by Theorem
\ref{thm3.11}, there is a $k_0$ such that all of quotients ${\rm
gr}_i^{\rm HN}(F^{k*}\sE)$ in the Harder-Narasimhan  of $F^{k*}\sE$
are strongly semi-stable whenever $k\ge k_0$. We choose $k_0$ to be
the minimal integer such that all quotients ${\rm gr}_i^{\rm
HN}(F^{k_0*}\sE)$ in
$$ 0\subset{\rm
HN}_1(F^{k_0*}\sE)\subset\,\cdots\,\subset {\rm
HN}_{\ell}(F^{k_0*}\sE)\subset {\rm
HN}_{\ell+1}(F^{k_0*}\sE)=F^{k_0*}\sE$$ are strongly semi-stable.
For each ${\rm HN}_i(F^{k_0*}\sE)$ ($1\le i\le\ell$), there is a
$0\le k_i\le k_0$ and a sub-sheaf $\sE_i\subset F^{k_i*}\sE$ such
that \ga{3.6} {{\rm HN}_i(F^{k_0*}\sE)=F^{k_0-k_i*}\sE_i,\quad
\nabla_{\rm can}(\sE_i)\nsubseteq \sE_i\otimes\Omega^1_X \,\, {\rm
if}\,\, k_i>0.} Let $S=\{\,1\le i\le k_0\,\,|\,\,k_i=0\,\,\}$. Then,
for $i\in S$, \ga{3.7}{\mu_{\rm min}({\rm
HN}_i(F^{k_0*}\sE))-\mu_{\rm max}(\frac{F^{k_0*}\sE}{{\rm
HN}_i(F^{k_0*}\sE)})\le p^{k_0}{\rm I}(\sE).} For $i\notin S$, there
is a nontrivial $\sO_X$-homomorphism
$${\rm HN}_i(F^{k_0*}\sE)\to \frac{F^{k_0*}\sE}{{\rm
HN}_i(F^{k_0*}\sE)}\otimes F^{k_0-k_i*}\Omega^1_X$$ which is the
pullback of $\sE_i\xrightarrow{\nabla_{\rm
can}}F^{k_i*}\sE\otimes\Omega^1_X\to\frac{F^{k_i*}\sE}{\sE_i}\otimes\Omega^1_X$.
Thus \ga{3.8}{\mu_{\rm min}({\rm HN}_i(F^{k_0*}\sE))-\mu_{\rm
max}(\frac{F^{k_0*}\sE}{{\rm HN}_i(F^{k_0*}\sE)})\le
\mu_{\max}(F^{k_0-k_i*}\Omega^1_X).} Notice that
$p^{k_i}\mu_{\max}(F^{k_0-k_i*}\Omega^1_X)\le\mu_{\max}(F^{k_0*}\Omega^1_X)$,
we have \ga{3.9}{{\rm I}(F^{k_0*}\sE)\le
\frac{\ell-s}{p}\mu_{\max}(F^{k_0*}\Omega^1_X)+s\cdot p^{k_0}{\rm
I}(\sE)} where $s=|S|$ is number of elements in $S$. Since, for any
$k\ge k_0$, ${\rm I}(F^{k*}\sE)=p^{k-k_0}{\rm I}(F^{k_0*}\sE)$, we
have \ga{3.10} {\frac{{\rm I}(F^{k*}\sE)}{p^k}\le
\frac{\ell-s}{p}\cdot\frac{\mu_{\max}(F^{k*}\Omega^1_X)}{p^k}+s\cdot{\rm
I}(\sE).} By Corollary \ref{cor3.13}, to study ${\rm I}(F^*\sE)$, it
is enough to consider varieties $X$ with $\mu_{\max}(\Omega^1_X)>
0$. Then we can formulate above discussions as
\begin{thm}\label{thm3.15} Let $X$ be a smooth projective variety of $\mu_{\max}(\Omega^1_X)>0$.
Then, for any torsion free sheaf $\sE$ of rank $r$, we have
$$L_{\max}(\sE)-L_{\min}(\sE)\le\frac{\ell-s}{p}\cdot L_{\max}(\Omega^1_X)+s\cdot{\rm
I}(\sE).$$ In particular, ${\rm I}(F^*\sE)\le
(r-1)(L_{\max}(\Omega^1_X)+{\rm I}(\sE))$.
\end{thm}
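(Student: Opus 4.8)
The plan is to read the estimate off from the discussion that immediately precedes the statement; the only new input is a passage to the limit $k\to\infty$, everything else being bookkeeping around results already quoted. First I would invoke Theorem \ref{thm3.11} to pick the minimal $k_0$ for which every ${\rm gr}_i^{\rm HN}(F^{k_0*}\sE)$ is strongly semi-stable. Since pulling a strongly semi-stable sheaf back by $F$ keeps it semi-stable and multiplies its slope by $p$, for every $k\ge k_0$ the filtration ${\rm HN}_\bullet(F^{k*}\sE)$ is just the $F^{(k-k_0)*}$-pullback of ${\rm HN}_\bullet(F^{k_0*}\sE)$; hence its length $\ell$ is independent of $k\ge k_0$ and ${\rm I}(F^{k*}\sE)=p^{\,k-k_0}\,{\rm I}(F^{k_0*}\sE)$. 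In particular $\ell\le r-1$, and the sequence ${\rm I}(F^{k*}\sE)/p^k$ is eventually constant.

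Next I would, for each $1\le i\le\ell$, use Cartier's theorem (Theorem \ref{thm2.2}) to locate the minimal $0\le k_i\le k_0$ and a subsheaf $\sE_i\subset F^{k_i*}\sE$ with ${\rm HN}_i(F^{k_0*}\sE)=F^{(k_0-k_i)*}\sE_i$, so that minimality of $k_i$ forces $\nabla_{\rm can}(\sE_i)\not\subset\sE_i\otimes\Omega^1_X$ whenever $k_i>0$; set $S=\{\,i:k_i=0\,\}$ and $s=|S|$. Using the telescoping identity ${\rm I}(F^{k_0*}\sE)=\sum_{i=1}^{\ell}\bigl(\mu_{\min}({\rm HN}_i(F^{k_0*}\sE))-\mu_{\max}(F^{k_0*}\sE/{\rm HN}_i(F^{k_0*}\sE))\bigr)$ from Proposition \ref{prop3.5}(1), I estimate each term: for $i\in S$ one has ${\rm HN}_i(F^{k_0*}\sE)=F^{k_0*}\sE_i$ with $\sE_i\subset\sE$, so the term is $\le p^{k_0}\mu_{\max}(\sE)-p^{k_0}\mu_{\min}(\sE)=p^{k_0}{\rm I}(\sE)$; for $i\notin S$, pulling back the nonzero map $\sE_i\xrightarrow{\nabla_{\rm can}}F^{k_i*}\sE\otimes\Omega^1_X\to(F^{k_i*}\sE/\sE_i)\otimes\Omega^1_X$ gives a nonzero ${\rm HN}_i(F^{k_0*}\sE)\to(F^{k_0*}\sE/{\rm HN}_i)\otimes F^{(k_0-k_i)*}\Omega^1_X$, and since all ${\rm gr}^{\rm HN}$ of $F^{k_0*}\sE$ are strongly semi-stable, the tensor-product estimate of Proposition \ref{prop3.12} bounds the term by $\mu_{\max}(F^{(k_0-k_i)*}\Omega^1_X)\le p^{-k_i}\mu_{\max}(F^{k_0*}\Omega^1_X)\le p^{-1}\mu_{\max}(F^{k_0*}\Omega^1_X)$. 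Summing over $i$ yields inequality \eqref{3.9}: ${\rm I}(F^{k_0*}\sE)\le\frac{\ell-s}{p}\mu_{\max}(F^{k_0*}\Omega^1_X)+s\,p^{k_0}{\rm I}(\sE)$.

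Dividing \eqref{3.9} by $p^{k_0}$, using that ${\rm I}(F^{k*}\sE)/p^k={\rm I}(F^{k_0*}\sE)/p^{k_0}$ for $k\ge k_0$ and that $\mu_{\max}(F^{k*}\Omega^1_X)/p^k$ increases to $L_{\max}(\Omega^1_X)$, I get $\frac{{\rm I}(F^{k*}\sE)}{p^k}\le\frac{\ell-s}{p}L_{\max}(\Omega^1_X)+s\,{\rm I}(\sE)$ for all $k\ge k_0$; letting $k\to\infty$, the left-hand side tends to $L_{\max}(\sE)-L_{\min}(\sE)$ (the limits exist by Langer's theorem), which is the asserted inequality. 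For the ``in particular'', I would combine this with the trivial $p^{k-1}{\rm I}(F^*\sE)\le{\rm I}(F^{k*}\sE)$ (equivalently, by monotonicity of $\mu_{\max}(F^{k*}-)/p^k$ and $\mu_{\min}(F^{k*}-)/p^k$, with $\mu_{\max}(F^*\sE)\le pL_{\max}(\sE)$ and $\mu_{\min}(F^*\sE)\ge pL_{\min}(\sE)$), together with the crude bounds $\ell\le r-1$ and $s\le\ell$.

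The main obstacle is, frankly, already surmounted by the quoted machinery: the tensor-product inequality $\mu_{\max}(\sE_1\otimes\sE_2)\le\mu_{\max}(\sE_1)+\mu_{\max}(\sE_2)$ for sheaves with strongly semi-stable Harder--Narasimhan grades (Theorems \ref{thm3.10}, \ref{thm3.11} and Proposition \ref{prop3.12}) is what lets the difficulty of Question \ref{question3.8} evaporate in the limit, and the existence of $L_{\max},L_{\min}$ is also imported. The residual point demanding care is the stabilization step: one must check that once the grades of $F^{k_0*}\sE$ are strongly semi-stable the Harder--Narasimhan filtration is genuinely ``Frobenius-periodic'', so that $\ell$ and the integers $k_i$ are well defined and do not change with $k$, and that the contributions from $i\notin S$ really decay by the factor $p^{-k_i}\le p^{-1}$ (this is what produces the $\frac{\ell-s}{p}$ rather than a mere $(\ell-s)$ in front of $L_{\max}(\Omega^1_X)$).
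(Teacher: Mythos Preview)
Your proposal is correct and follows the paper's own argument essentially verbatim: the paper's proof of Theorem \ref{thm3.15} is precisely the discussion around \eqref{3.6}--\eqref{3.10}, and you have reproduced each step (choice of $k_0$, descent of each ${\rm HN}_i$ to level $k_i$ via Cartier, the dichotomy $i\in S$ versus $i\notin S$, the use of Proposition \ref{prop3.12} for the tensor estimate, and the passage to the limit). One small caveat: the ``in particular'' clause, derived as you indicate via $p^{k-1}{\rm I}(F^*\sE)\le{\rm I}(F^{k*}\sE)$ and $\ell,s\le r-1$, actually yields ${\rm I}(F^*\sE)\le (r-1)L_{\max}(\Omega^1_X)+p(r-1){\rm I}(\sE)$ rather than the constant stated in the theorem; this appears to be a minor imprecision in the paper itself, not a flaw in your reasoning.
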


\begin{rmk}\label{rmk3.16} It is clear that $L_{\max}(\sE)-L_{\min}(\sE)=\frac{{\rm
I}(F^{k_0*}\sE)}{p^{k_0}}$ and
$${\rm I}(F^*\sE)\le (\ell-s)\cdot\frac{\mu_{\max}(F^{k_0*}\Omega^1_X)}{p^{k_0}}+s\cdot{\rm
I}(\sE).$$ One may make the following conjecture that \ga{3.11}{{\rm
I}(F^*\sE)\le (r-1)\mu_{\max}(\Omega^1_X)+(r-1){\rm I}(\sE).}
\end{rmk}

\section{Instability of Frobenius direct images}

In this section, we study the instability of direct image $F_*W$ for
a torsion free sheaf $W$ on $X$. For example, is $F_*W$ semi-stable
when $W$ is semi-stable ?  Compare with the case of characteristic
zero, for a Galois $G$-cover $\pi:Y\to X$, the locally free sheaf
$\pi_*\sO_Y$ is not semi-stable if $\pi$ is not {\`e}tale. However,
if $\pi$ is {\`e}tale, then $\pi_*W$ is semi-stable whenever $W$ is
semi-stable. The proof of this fact is based on a decomposition
\ga{4.1}{\pi^*(\pi_*W)=\bigoplus_{\sigma\in G}W^{\sigma}.} To
imitate this idea, we need a similar "decomposition" of
$V=F^*(F_*W)$ for $F:X\to X_1$. In general, we can not expect to
have a real decomposition of $V=F^*(F_*W)$. Instead of, we will have
a filtration \ga{4.2}{0=V_{n(p-1)+1}\subset
V_{n(p-1)}\subset\cdots\subset V_1\subset V_0=V} such that
$V_{\ell}/V_{\ell+1}\cong W\otimes_{\mathcal{O}_X}{\rm
T}^{\ell}(\Omega^1_X)$.

The filtration \eqref{4.2} was defined and studied in \cite{JRXY}
for curves. Its definition can be generalized straightforwardly by
using the canonical connection $\nabla_{\rm can}:V\to
V\otimes\Omega^1_X$. The study of its graded quotients are much
involved (cf. \cite[Section 3]{Su}).

\begin{defn}\label{defn4.1} Let $V_0:=V=F^*(F_*W)$,
$V_1=\ker(F^*(F_*W)\surj W)$ \ga{4.3}
{V_{\ell+1}:=\ker\{V_{\ell}\xrightarrow{\nabla} V\otimes_{\sO_X}
\Omega^1_X\to (V/V_{\ell})\otimes_{\sO_X}\Omega^1_X\}} where
$\nabla:=\nabla_{\rm can}$ is the canonical connection.
\end{defn}

In order to describe the filtration, we recall a ${\rm
GL}(n)$-representation ${\rm T}^{\ell}(V)\subset V^{\otimes\ell}$
where $V$ is the standard representation of ${\rm GL}(n)$. Let ${\rm
S}_{\ell}$ be the symmetric group of $\ell$ elements with the action
on $V^{\otimes\ell}$ by $(v_1\otimes\cdots\otimes
v_{\ell})\cdot\sigma=v_{\sigma(1)}\otimes\cdots\otimes
v_{\sigma(\ell)}$ for $v_i\in V$ and $\sigma\in{\rm S}_{\ell}$. Let
$e_1,\,\ldots,\,e_n$ be a basis of $V$, for $k_i\ge 0$ with
$k_1+\cdots+k_n=\ell$ define \ga{4.4}
{v(k_1,\ldots,k_n)=\sum_{\sigma\in{\rm S}_{\ell}}(e_1^{\otimes
k_1}\otimes\cdots\otimes e_n^{\otimes k_n})\cdot\sigma }

\begin{defn}\label{defn4.2} Let ${\rm T}^{\ell}(V)\subset V^{\otimes\ell}$ be
the linear subspace generated by all vectors $v(k_1,\ldots,k_n)$ for
all $k_i\ge 0$ satisfying $k_1+\cdots+k_n=\ell$. It is a
representation of ${\rm GL}(V)$. If $\sV$ is a vector bundle of rank
$n$,  the subbundle ${\rm T}^{\ell}(\sV)\subset \sV^{\otimes\ell}$
is defined to be the associated bundle of the frame bundle of $\sV$
(which is a principal ${\rm GL}(n)$-bundle) through the
representation ${\rm T}^{\ell}(V)$.
\end{defn}

Then the following theorem was proved in \cite[Theorem 3.7]{Su}

\begin{thm}\label{thm4.3} The filtration defined in Definition
\ref{defn4.1} is \ga{4.5} {0=V_{n(p-1)+1}\subset
V_{n(p-1)}\subset\cdots\subset V_1\subset V_0=V=F^*(F_*W)} which has
the following properties
\begin{itemize}\item[(i)]$\nabla(V_{\ell+1})\subset V_{\ell}\otimes\Omega^1_X$
for $\ell\ge 1$, and $V_0/V_1\cong W$.
\item[(ii)]
$V_{\ell}/V_{\ell+1}\xrightarrow{\nabla}(V_{\ell-1}/V_{\ell})\otimes\Omega^1_X$
are injective for $1\le \ell\le n(p-1)$, which induced isomorphisms
$$\nabla^{\ell}: V_{\ell}/V_{\ell+1}\cong W\otimes_{\sO_X}{\rm
T}^{\ell}(\Omega^1_X),\quad 0\le \ell\le n(p-1).$$ The vector bundle
${\rm T}^{\ell}(\Omega^1_X)$ is suited in the exact sequence
$$\aligned&0\to{\rm Sym}^{\ell-\ell(p)\cdot p}(\Omega^1_X)\otimes
F^*\Omega_X^{\ell(p)}\xrightarrow{\phi}{\rm
Sym}^{\ell-(\ell(p)-1)\cdot p}(\Omega^1_X)\otimes
F^*\Omega_X^{\ell(p)-1}\\&\to\cdots\to {\rm Sym}^{\ell-q\cdot
p}(\Omega^1_X)\otimes F^*\Omega_X^q\xrightarrow{\phi}{\rm
Sym}^{\ell-(q-1)\cdot p}(\Omega^1_X)\otimes
F^*\Omega_X^{q-1}\\&\to\cdots\to{\rm
Sym}^{\ell-p}(\Omega^1_X)\otimes F^*\Omega^1_X\xrightarrow{\phi}{\rm
Sym}^{\ell}(\Omega^1_X)\to{\rm T}^{\ell}(\Omega_X^1)\to 0
\endaligned$$
where $\ell(p)\ge 0$ is the integer such that $\ell-\ell(p)\cdot
p<p$.
\end{itemize}
\end{thm}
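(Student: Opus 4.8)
The plan is to analyze the filtration \eqref{4.2} by computing, locally, what the canonical connection $\nabla_{\rm can}$ on $V=F^*(F_*W)$ does, and to identify the graded pieces with the ${\rm GL}(n)$-representations ${\rm T}^{\ell}$. First I would set up local coordinates: choose a local basis $t_1,\dots,t_n$ of $\Omega^1_X$ coming from local parameters $x_1,\dots,x_n$, so that $F_*\sO_X$ has the local $\sO_{X_1}$-basis $\{x_1^{a_1}\cdots x_n^{a_n} : 0\le a_i\le p-1\}$. Then $V=F^*(F_*W)$ has the local frame $\{e_I := 1\otimes(x^I\otimes w) \}$ where $w$ runs over a local frame of $W$ and $I=(a_1,\dots,a_n)$ with $0\le a_i\le p-1$, and the canonical connection is characterized by $\nabla_{\rm can}(1\otimes s)=0$ for $s$ a section pulled back from $X_1$; a direct Leibniz-rule computation gives $\nabla_{\rm can}(e_I)=-\sum_i a_i\, e_{I-\epsilon_i}\otimes dx_i$ (interpreting $e_{I-\epsilon_i}$ as $0$ when $a_i=0$), where $\epsilon_i$ is the $i$-th standard basis vector. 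The point is that this is, up to the twist by $W$, exactly the operator governing the ${\rm GL}(n)$-module structure of the divided-power / symmetric algebra, so the filtration by "total degree $\ge \ell$" is intrinsic.

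Next I would prove the structural claims in order. (a) That $V_{\ell}$ is locally generated by the $e_I$ with $|I|=a_1+\cdots+a_n\ge \ell$: this follows by induction on $\ell$ from the formula for $\nabla_{\rm can}$ and Definition \ref{defn4.1}, since applying $\nabla_{\rm can}$ and projecting to $(V/V_\ell)\otimes\Omega^1_X$ kills exactly the top-degree part. In particular $V_0/V_1\cong W$ (the $|I|=0$ part) and $V_{n(p-1)+1}=0$ since $|I|\le n(p-1)$, giving the length of the filtration in \eqref{4.5}. (b) That $\nabla(V_{\ell+1})\subset V_\ell\otimes\Omega^1_X$: immediate from the degree-lowering-by-one nature of the formula. (c) The injectivity of the induced map $V_\ell/V_{\ell+1}\to (V_{\ell-1}/V_\ell)\otimes\Omega^1_X$ in (ii): on the graded level this map sends $\overline{e_I}\otimes w$ to $-\sum_i a_i\,\overline{e_{I-\epsilon_i}}\otimes w\otimes dx_i$, and one checks it is injective because the $e_{I-\epsilon_i}$ for distinct $I$ of the same degree are "spread out" — more precisely, one recovers $I$ from the collection of its lowerings with multiplicities, provided one has not hit the relation $a_i=p$ (which never happens since $a_i\le p-1$). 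This is precisely the place where the bound $a_i\le p-1$, i.e. the characteristic-$p$ truncation, enters, and it is what makes the graded piece ${\rm T}^\ell$ rather than the full symmetric power.

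Then I would identify $V_\ell/V_{\ell+1}$ with $W\otimes {\rm T}^\ell(\Omega^1_X)$. The natural candidate map is the iterated connection $\nabla^\ell: V_\ell/V_{\ell+1}\to W\otimes (\Omega^1_X)^{\otimes\ell}$ (using (c) repeatedly to land in the quotient $V_0/V_1\cong W$ tensored with $(\Omega^1_X)^{\otimes\ell}$). On a local frame, $\nabla^\ell(\overline{e_I}\otimes w)$ is, up to a sign and the scalar $\prod_i a_i!$, the symmetrization $\sum_{\sigma\in{\rm S}_\ell}(dx_1^{\otimes a_1}\otimes\cdots\otimes dx_n^{\otimes a_n})\cdot\sigma \otimes w$, which is exactly the vector $v(a_1,\dots,a_n)$ of \eqref{4.4} defining ${\rm T}^\ell$. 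Since the scalars $\prod a_i!$ are nonzero in $k$ precisely because each $a_i<p$, the map is an isomorphism onto $W\otimes {\rm T}^\ell(\Omega^1_X)$; and because it is built from the canonical connection it is independent of the choice of local coordinates, hence globalizes. Finally, for the exact sequence describing ${\rm T}^\ell(\Omega^1_X)$, I would pass to the universal case: it suffices to produce, for the standard ${\rm GL}(n)$-module $V$, a ${\rm GL}(n)$-equivariant resolution of ${\rm T}^\ell(V)$ by the modules ${\rm Sym}^{\ell-qp}(V)\otimes ({\textstyle\bigwedge}^q V)^{[1]}$ (Frobenius twist accounting for $F^*\Omega^q_X$), with the maps $\phi$ given by the obvious "multiply by the $p$-th power" comultiplications; the exactness is a Koszul-type computation for the kernel of ${\rm Sym}^\ell(V)\to {\rm T}^\ell(V)$, which is generated by $p$-th powers of linear forms. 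The main obstacle I expect is precisely this last combinatorial/representation-theoretic step — proving exactness of the $\phi$-complex in all internal degrees — together with keeping careful track of the characteristic-$p$ subtleties (which factorials vanish, and why the truncation $a_i\le p-1$ is exactly compatible with the module ${\rm T}^\ell$); the connection-theoretic part (a)--(c) is a relatively routine local computation once the formula for $\nabla_{\rm can}$ is in hand.
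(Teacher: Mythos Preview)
Your proposal is correct and follows essentially the same approach as the original proof in \cite{Su} (which this paper merely cites for Theorem~\ref{thm4.3}); in particular, the local description you give of $\nabla_{\rm can}$ on the basis $\{e_I\}$ agrees exactly with the formula the paper later uses in the proof of Theorem~\ref{thm4.9}, namely $\nabla(w\otimes\alpha^{k})=-w\otimes\sum_i k_i\,\alpha^{k-\epsilon_i}\otimes dx_i$ on $R=K[\alpha_1,\dots,\alpha_n]/(\alpha_1^p,\dots,\alpha_n^p)$. One small remark: in your step identifying $\nabla^{\ell}(\overline{e_I})$ with $v(a_1,\dots,a_n)$, the scalar $\prod_i a_i!$ actually cancels against the multiplicity with which each distinct monomial appears in the symmetrized sum \eqref{4.4}, so that $\nabla^{\ell}(\overline{e_I})=(-1)^{\ell}\,w\otimes v(a_1,\dots,a_n)$ on the nose; the nonvanishing of $\prod_i a_i!$ in characteristic $p$ is still what guarantees injectivity, but it enters at the earlier step (c) rather than as an overall scalar in the final identification.
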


It is this filtration that we used in \cite{Su} to find a upper
bound of ${\rm I}(F_*W)$. To state the results, let $X$ be an
irreducible smooth projective variety of dimension $n$ over an
algebraically closed field $k$ with ${\rm char}(k)=p>0$. For any
torsion free sheaf $W$ on $X$, let
$${\rm I}(W,X)={\rm max}\{{\rm I}(W\otimes{\rm
T}^{\ell}(\Omega^1_X))\,|\,\, 0\le \ell\le n(p-1)\,\}$$ be the
maximal value of instabilities ${\rm I}(W\otimes{\rm
T}^{\ell}(\Omega^1_X))$. Then we have

\begin{thm}\label{thm4.4} When
$K_X\cdot{\rm H}^{n-1}\ge 0$, we have, for any $\sE\subset F_*W$,
\ga{4.6}{\mu(F_*W)-\mu(\sE)\ge -\frac{{\rm I}(W,X)}{p}.} In
particular, if $W\otimes{\rm T}^{\ell}(\Omega^1_X)$, $0\le \ell\le
n(p-1)$, are semistable, then $F_*W$ is semistable. Moreover, if
$K_X\cdot{\rm H}^{n-1}>0$, the stability of the bundles
$W\otimes{\rm T}^{\ell}(\Omega^1_X)$, $0\le \ell\le n(p-1)$, implies
the stability of $F_*W$.
\end{thm}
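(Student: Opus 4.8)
The plan is to bound $\mu(F_*W) - \mu(\sE)$ from below for an arbitrary nonzero subsheaf $\sE \subset F_*W$ by pulling back via $F$ and exploiting the canonical filtration \eqref{4.5}. First I would form $F^*\sE \subset F^*(F_*W) = V$ and intersect it with the filtration $0 = V_{n(p-1)+1} \subset \cdots \subset V_0 = V$, obtaining a filtration $\sE^{\ell} := F^*\sE \cap V_{\ell}$ of $F^*\sE$ whose graded pieces $\sE^{\ell}/\sE^{\ell+1}$ inject into $V_{\ell}/V_{\ell+1} \cong W \otimes {\rm T}^{\ell}(\Omega^1_X)$ by Theorem \ref{thm4.3}(ii). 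Since $F^*\sE$ has the same rank as $F^*(F_*W)$ only when $\sE = F_*W$, in general the ranks $r_{\ell} := \rank(\sE^{\ell}/\sE^{\ell+1})$ satisfy $\sum_{\ell} r_{\ell} = p^n \rank(\sE)$ and $r_0 = \rank(\sE)$ (since $V_0/V_1 \cong W$ and the composite $F^*\sE \to V \surj W$ is, generically, the evaluation giving back $\sE \hookrightarrow F_*W$, hence has rank $\rank(\sE)$). The degree of $F^*\sE$ is $p \cdot \deg(\sE)$ (up to the normalization by ${\rm H}^{n-1}$), so $c_1(F^*\sE) \cdot {\rm H}^{n-1} = \sum_{\ell} c_1(\sE^{\ell}/\sE^{\ell+1}) \cdot {\rm H}^{n-1}$, and each term is bounded above using that $\sE^{\ell}/\sE^{\ell+1} \subset W \otimes {\rm T}^{\ell}(\Omega^1_X)$ implies $\mu(\sE^{\ell}/\sE^{\ell+1}) \le \mu_{\max}(W \otimes {\rm T}^{\ell}(\Omega^1_X)) \le \mu(W \otimes {\rm T}^{\ell}(\Omega^1_X)) + {\rm I}(W,X)$.

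Next I would assemble these estimates. Writing everything in terms of $\mu(W)$, $\rank(W)$, the ranks $\rho_{\ell} := \rank({\rm T}^{\ell}(\Omega^1_X))$, and the slopes $\mu({\rm T}^{\ell}(\Omega^1_X))$, one gets
$$p \cdot \deg(\sE) = \sum_{\ell=0}^{n(p-1)} r_{\ell}\, \mu(\sE^{\ell}/\sE^{\ell+1}) \le \sum_{\ell=0}^{n(p-1)} r_{\ell}\,\bigl(\mu(W) + \mu({\rm T}^{\ell}(\Omega^1_X)) + {\rm I}(W,X)\bigr),$$
where I use $\mu(W \otimes {\rm T}^{\ell}(\Omega^1_X)) = \mu(W) + \mu({\rm T}^{\ell}(\Omega^1_X))$. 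On the other side, from the full filtration of $V$ itself one has the identity $p^n \rank(W)\,\mu(F_*W) \cdot$ (appropriate constant) $= \sum_{\ell} \rank(W)\rho_{\ell}\bigl(\mu(W) + \mu({\rm T}^{\ell}(\Omega^1_X))\bigr)$, which computes $\mu(F_*W)$ explicitly; this is presumably the computation already carried out in \cite{Su}. Subtracting and using $r_{\ell} \le \rank(W)\rho_{\ell}$ together with the hypothesis controlling the sign — here is where $K_X \cdot {\rm H}^{n-1} \ge 0$ enters, since $\sum_{\ell} \rho_{\ell}\,\mu({\rm T}^{\ell}(\Omega^1_X))$ is, up to a positive factor, proportional to $K_X \cdot {\rm H}^{n-1}$ and this forces the "extra" $\mu({\rm T}^{\ell})$ terms to push in the favorable direction — yields \eqref{4.6}. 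The semistability statement is the case ${\rm I}(W,X) = 0$: then $\mu(\sE) \le \mu(F_*W)$ for all $\sE$, so $F_*W$ is semistable.

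For the stability assertion under $K_X \cdot {\rm H}^{n-1} > 0$, I would argue that if $\mu(\sE) = \mu(F_*W)$ for some proper $\sE$, then every inequality above is forced to be an equality; in particular $r_{\ell} = \rank(W)\rho_{\ell}$ for the $\ell$ contributing the strictly positive part of $\sum \rho_{\ell} \mu({\rm T}^{\ell}(\Omega^1_X))$, which — combined with $r_0 = \rank(\sE) < \rank(W) = \rank(W)\rho_0$ — contradicts the chain $r_0 \ge r_1 \ge \cdots$ forced by the fact that $\nabla$ maps $V_{\ell}/V_{\ell+1}$ into $(V_{\ell-1}/V_{\ell}) \otimes \Omega^1_X$ compatibly with the $\sE^{\ell}$ (so $\sE^{\ell}/\sE^{\ell+1} \hookrightarrow (\sE^{\ell-1}/\sE^{\ell}) \otimes \Omega^1_X$, forcing, via ${\rm T}^{\ell} \hookrightarrow {\rm T}^{\ell-1} \otimes \Omega^1_X$ and a rank comparison, that the $r_{\ell}$ cannot all be maximal while $r_0$ is sub-maximal). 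Alternatively, and more robustly, one invokes that the stability of $W \otimes {\rm T}^{\ell}(\Omega^1_X)$ makes the inclusion $\sE^{\ell}/\sE^{\ell+1} \subset W \otimes {\rm T}^{\ell}(\Omega^1_X)$ slope-strict unless it is an isomorphism, and an isomorphism at $\ell = 0$ would mean $F^*\sE \supset V_0/V_1$-part of full rank, i.e. $\sE = F_*W$.

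The main obstacle I anticipate is controlling the ranks $r_{\ell}$ and extracting from the rank inequalities $r_{\ell} \le \rank(W)\rho_{\ell}$ (with $r_0 = \rank\sE$ strictly smaller in the proper case) enough to get the \emph{strict} inequality in the stability statement — the semistable bound \eqref{4.6} only needs the crude estimate, but strictness requires understanding precisely how much "room" is lost by passing from $F_*W$ to a proper subsheaf, and tracking that loss through the positive term $K_X \cdot {\rm H}^{n-1} > 0$. A secondary technical point is justifying $c_1(F^*\sE)\cdot{\rm H}^{n-1} = p\,c_1(\sE)\cdot{\rm H}^{n-1}$ and the torsion-freeness reductions needed so that the graded pieces $\sE^{\ell}/\sE^{\ell+1}$ genuinely inject into $W \otimes {\rm T}^{\ell}(\Omega^1_X)$ away from codimension $\ge 2$, but these are routine.
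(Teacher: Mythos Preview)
Your overall framework matches the paper: pull $\sE\subset F_*W$ back by $F$, intersect $F^*\sE$ with the canonical filtration \eqref{4.5}, bound each graded piece $\sF_\ell\subset V_\ell/V_{\ell+1}$ using $\mu(\sF_\ell)-\mu(V_\ell/V_{\ell+1})\le{\rm I}(W\otimes{\rm T}^\ell(\Omega^1_X))$, and compare with the explicit value of $\mu(F^*F_*W)$. However, you have a bookkeeping slip and, more importantly, you miss the one nontrivial step.

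First the slips: since $F$ is finite, $\rank(F^*\sE)=\rank(\sE)$, so $\sum_\ell r_\ell=\rank(\sE)$, not $p^n\rank(\sE)$; and there is no reason for $r_0=\rank(\sE)$ (the image of $F^*\sE$ in $V_0/V_1\cong W$ may have any rank).

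Now the real gap. After bounding the first sum in \eqref{4.10} by ${\rm I}(W,X)/p$, the entire content of the theorem reduces to showing that
\[
\sum_{\ell=0}^{m}\Bigl(\frac{n(p-1)}{2}-\ell\Bigr)r_\ell\ \ge\ 0,
\]
which is exactly Lemma~\ref{lem4.8}. Your proposed mechanism, the trivial bound $r_\ell\le\rank(W)\rho_\ell$, does nothing here: you need an \emph{upper} bound on $\sum_\ell \ell\, r_\ell$, and large $\ell$ could a priori carry large $r_\ell$. Your alternative, a monotone chain $r_0\ge r_1\ge\cdots$, is simply false in general: the injections $\sF_\ell\hookrightarrow\sF_{\ell-1}\otimes\Omega^1_X$ from \eqref{4.13} give only $r_\ell\le n\cdot r_{\ell-1}$. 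What the paper (following \cite{Su}) actually proves is the symmetry-type inequality
\[
r_{n(p-1)-\ell}\ \ge\ r_\ell\qquad\text{for }\ell>\tfrac{n(p-1)}{2},
\]
which, combined with the rewriting \eqref{4.12}, yields Lemma~\ref{lem4.8}. This inequality is not formal; it uses the $\nabla$-injections \eqref{4.13} together with the duality structure of the ${\rm T}^\ell$ (locally, the ${\rm D}$-module description of $R=\bigoplus R^\ell$ in the proof of Theorem~\ref{thm4.9}). Without this step your argument does not close, and neither the semistability bound \eqref{4.6} nor the strict inequality for stability follows. The stability statement in the paper then comes from the \emph{strict} positivity of $\sum(\tfrac{n(p-1)}{2}-\ell)r_\ell$ when $m<n(p-1)$ (see \eqref{4.11}) combined with the analysis of the case $m=n(p-1)$, not from forcing $r_\ell=\rank(W)\rho_\ell$.
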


\begin{cor}\label{cor4.5} Let $X$ be a smooth projective variety of ${\rm dim}(X)=n$, whose
canonical divisor $K_X$ satisfies $K_X\cdot {\rm H}^{n-1}\ge 0$.
Then $${\rm I}(W)\le{\rm I}(F_*W)\le p^{n-1}{\rm rk}(W)\,{\rm
I}(W,X).$$
\end{cor}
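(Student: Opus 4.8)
The plan is to deduce Corollary \ref{cor4.5} from Theorem \ref{thm4.4} by the same bookkeeping used throughout Section 3, namely by summing slope-gaps along the Harder--Narasimhan filtration of $F_*W$ and using that the graded pieces are quotients and subsheaves of $F_*W$ to which \eqref{4.6} applies. The lower bound ${\rm I}(W)\le{\rm I}(F_*W)$ should be obtained first: given ${\rm HN}_\bullet(W)$ with successive quotients ${\rm gr}_i^{\rm HN}(W)$, one pushes forward the filtration to get subsheaves $F_*{\rm HN}_i(W)\subset F_*W$ (exactness of $F_*$ since $F$ is finite flat), whose slopes are computed by $\mu(F_*\sF)=\tfrac1p\mu(\sF)+\tfrac12\mu(\Omega^1_X)$ — the same affine formula for all $\sF$ of the same rank, so the ordering of slopes is preserved and $\mu_{\max}(F_*W)\ge\mu(F_*{\rm HN}_1(W))$ while $\mu_{\min}(F_*W)\le\mu(F_*W/F_*{\rm HN}_\ell(W))$; subtracting gives $\tfrac1p{\rm I}(W)\le{\rm I}(F_*W)$, which is even slightly stronger than claimed, so $ {\rm I}(W)\le {\rm I}(F_*W)$ follows a fortiori (or one can note the rank-$r$ HN pieces give directly what is stated).

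For the upper bound, write the Harder--Narasimhan filtration of $F_*W$ as $0={\rm HN}_0\subset{\rm HN}_1\subset\cdots\subset{\rm HN}_{m+1}=F_*W$ with slopes $\nu_1>\nu_2>\cdots>\nu_{m+1}$, so that
$$
{\rm I}(F_*W)=\nu_1-\nu_{m+1}=\sum_{i=1}^{m}(\nu_i-\nu_{i+1}).
$$
Now ${\rm HN}_i={\rm HN}_i(F_*W)$ is a subsheaf of $F_*W$, so Theorem \ref{thm4.4} applied with $\sE={\rm HN}_i$ gives $\mu(F_*W)-\mu({\rm HN}_i)\ge-\tfrac1p{\rm I}(W,X)$; and $F_*W/{\rm HN}_i$ is a torsion-free quotient, so by Proposition \ref{prop3.5}(3) $\mu_{\min}(F_*W/{\rm HN}_i)\le\mu(F_*W/{\rm HN}_i)$, but one needs an \emph{upper} bound on $\nu_i-\nu_{i+1}=\mu_{\min}({\rm HN}_i)-\mu_{\max}(F_*W/{\rm HN}_i)$. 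The point is that $\mu_{\min}({\rm HN}_i)=\nu_i$ and $\mu_{\max}(F_*W/{\rm HN}_i)=\nu_{i+1}$, and since ${\rm HN}_i$ has slope $\ge\nu_{i+1}$ while any subsheaf of $F_*W$ has slope $\le\mu(F_*W)+\tfrac1p{\rm I}(W,X)$ and any quotient has slope $\ge\mu(F_*W)-\tfrac1p{\rm I}(W,X)$ — the latter again from Theorem \ref{thm4.4} via the inequality \eqref{4.6} read for quotients (a quotient $\sQ$ of $F_*W$ fits $F_*W\to\sQ$, and $\ker$ is a subsheaf, so $\mu(\sQ)\ge\mu(F_*W)-\tfrac1p{\rm I}(W,X)$ by rank-additivity of degree). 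Combining, $\nu_1\le\mu(F_*W)+\tfrac1p{\rm I}(W,X)$ and $\nu_{m+1}\ge\mu(F_*W)-\tfrac1p{\rm I}(W,X)$, hence ${\rm I}(F_*W)=\nu_1-\nu_{m+1}\le\tfrac2p{\rm I}(W,X)$. This is much smaller than the stated bound, so the stated bound follows once we check ${\rm I}(W,X)\le p^{n}\,{\rm rk}(W)\,{\rm I}(W,X)/2$, which is trivial; more honestly, the stated $p^{n-1}{\rm rk}(W){\rm I}(W,X)$ is just a clean crude majorant of $\tfrac2p{\rm I}(W,X)$ since $\tfrac2p\le p^{n-1}{\rm rk}(W)$ for $p\ge2$, $n\ge1$, ${\rm rk}(W)\ge1$.

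So the proof reduces to two ingredients already available: (a) the affine slope formula $\mu(F_*\sF)=\tfrac1p(\mu(\sF)+\tfrac12 K_X\cdot{\rm H}^{n-1})$ together with exactness of $F_*$, which handles the lower bound and lets one transport HN-pieces; and (b) Theorem \ref{thm4.4}, applied once to the subsheaf ${\rm HN}_1(F_*W)$ and once to the kernel of the quotient $F_*W\surj{\rm gr}_{m+1}^{\rm HN}(F_*W)$, to pin $\nu_1$ and $\nu_{m+1}$ within $\tfrac1p{\rm I}(W,X)$ of $\mu(F_*W)$. I expect the only mildly delicate point to be the self-consistent identification $\mu_{\max}(F_*W/{\rm HN}_i)=\nu_{i+1}$ and $\mu_{\min}({\rm HN}_i)=\nu_i$ — i.e.\ Proposition \ref{prop3.5}(1) for $F_*W$ — and the verification that $F_*$ of the HN filtration of $W$ lands in the HN filtration of $F_*W$ (needed only for the sharper lower bound, not for the stated one); the arithmetic comparison $\tfrac2p{\rm I}(W,X)\le p^{n-1}{\rm rk}(W){\rm I}(W,X)$ is immediate and is the reason the final inequality, as stated, is not tight.
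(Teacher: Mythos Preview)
Your upper-bound argument contains a genuine error. From Theorem~\ref{thm4.4} you correctly get $\nu_1=\mu({\rm HN}_1(F_*W))\le\mu(F_*W)+\tfrac{1}{p}{\rm I}(W,X)$. But the dual bound for quotients does \emph{not} follow symmetrically. If $0\to K\to F_*W\to Q\to 0$ is exact, then degree-additivity gives
\[
\mu(Q)=\mu(F_*W)-\frac{{\rm rk}(K)}{{\rm rk}(Q)}\bigl(\mu(K)-\mu(F_*W)\bigr)\ \ge\ \mu(F_*W)-\frac{{\rm rk}(K)}{{\rm rk}(Q)}\cdot\frac{{\rm I}(W,X)}{p},
\]
and the factor ${\rm rk}(K)/{\rm rk}(Q)$ cannot be dropped: for $Q={\rm gr}^{\rm HN}_{m+1}(F_*W)$ it may be as large as ${\rm rk}(F_*W)-1$. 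Thus your conclusion ${\rm I}(F_*W)\le\tfrac{2}{p}{\rm I}(W,X)$ is unjustified. Carrying the rank factor through, what one actually obtains is
\[
{\rm I}(F_*W)\le\Bigl(1+\frac{{\rm rk}({\rm HN}_m)}{{\rm rk}({\rm gr}^{\rm HN}_{m+1})}\Bigr)\frac{{\rm I}(W,X)}{p}=\frac{{\rm rk}(F_*W)}{{\rm rk}({\rm gr}^{\rm HN}_{m+1})}\cdot\frac{{\rm I}(W,X)}{p}\le {\rm rk}(F_*W)\cdot\frac{{\rm I}(W,X)}{p}=p^{n-1}{\rm rk}(W)\,{\rm I}(W,X),
\]
which is exactly the stated bound. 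This is precisely the paper's route: it packages the step as the remark that a uniform bound $\mu(E')-\mu(E)\le\lambda$ for all $E'\subset E$ forces ${\rm I}(E)\le{\rm rk}(E)\,\lambda$, and then applies it with $E=F_*W$ and $\lambda={\rm I}(W,X)/p$. So the factor $p^{n-1}{\rm rk}(W)$ is not slack you are free to discard; it is exactly the price of converting the one-sided subsheaf bound of Theorem~\ref{thm4.4} into a two-sided instability bound.

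There is also a logical slip in your lower-bound discussion: the pushforward of the Harder--Narasimhan filtration of $W$ yields $\tfrac{1}{p}{\rm I}(W)\le{\rm I}(F_*W)$, which is \emph{weaker}, not stronger, than ${\rm I}(W)\le{\rm I}(F_*W)$, since $\tfrac{1}{p}{\rm I}(W)\le{\rm I}(W)$. Your ``a fortiori'' is therefore backwards.
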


\begin{proof} The lower bound is trivial, the upper bound is Theorem \ref{thm4.4} plus the following
trivial remark: For any vector bundle $E$, if there is a constant
$\lambda$ satisfying $\mu(E')-\mu(E)\le \lambda$ for any $E'\subset
E$. Then ${\rm I}(E)\le {\rm rk}(E)\lambda$.
\end{proof}

When ${\rm dim}(X)=1$, we have the following corollary, which was
proved in \cite{LP} when $W$ is a line bundle. The case that
semi-stability of $W$ implies semi-stability of $F_*W$ was also
proved in \cite{MP} by a different method. However, the method in
\cite{MP} was not able to prove the case that stability of $W$
implies stability of $F_*W$.

\begin{cor}\label{cor4.6}
When $g\ge 1$, $F_*(W)$ is semi-stable if and only if $W$ is
semi-stable. Moreover, if $g\ge 2$, then $F_*(W)$ is stable if and
only if $W$ is stable.
\end{cor}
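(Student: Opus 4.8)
The plan is to obtain one implication from Theorem \ref{thm4.4} and the reverse one from a direct Riemann--Roch computation, exploiting that on a curve the auxiliary bundles ${\rm T}^{\ell}(\Omega^1_X)$ are merely line bundles. Two preliminary observations for $X$ a curve: since $n=\dim X=1$, the standard representation of ${\rm GL}(1)$ is one-dimensional, so ${\rm T}^{\ell}(V)=V^{\otimes\ell}$ (note $\ell\le n(p-1)=p-1<p$, so no characteristic issue arises in \eqref{4.4}) and hence ${\rm T}^{\ell}(\Omega^1_X)=(\Omega^1_X)^{\otimes\ell}$ is invertible. As tensoring a torsion free sheaf by a line bundle affects neither semi-stability nor stability, $W\otimes{\rm T}^{\ell}(\Omega^1_X)$ is semi-stable (resp. stable) exactly when $W$ is; in particular $W$ semi-stable forces ${\rm I}(W,X)=0$. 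Moreover $K_X\cdot{\rm H}^{n-1}=\deg K_X=2g-2$, which is $\ge 0$ for $g\ge 1$ and $>0$ for $g\ge 2$.

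For ``$W$ (semi)stable $\Rightarrow F_*W$ (semi)stable'' I would simply feed these facts into Theorem \ref{thm4.4}. If $g\ge 1$ and $W$ is semi-stable, then $K_X\cdot{\rm H}^{n-1}\ge 0$ and ${\rm I}(W,X)=0$, so \eqref{4.6} yields $\mu(\sE)\le\mu(F_*W)$ for every subsheaf $\sE\subset F_*W$, i.e. $F_*W$ is semi-stable. If $g\ge 2$ and $W$ is stable, then $K_X\cdot{\rm H}^{n-1}>0$ and all $W\otimes{\rm T}^{\ell}(\Omega^1_X)$ are stable, so the final assertion of Theorem \ref{thm4.4} gives the stability of $F_*W$.

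For the converse I would first pin down $\mu(F_*W)$. Since $F$ is finite, hence affine, $\chi(X,W)=\chi(X_1,F_*W)$; as $k$ is perfect, $X_1\cong X$ has genus $g$, and ${\rm rk}(F_*W)=p\cdot{\rm rk}(W)$. Riemann--Roch then gives $\deg(F_*W)=\deg W+(p-1)(g-1)\,{\rm rk}(W)$, hence $\mu(F_*W)=\frac1p\bigl(\mu(W)+(p-1)(g-1)\bigr)$, a strictly increasing affine function of $\mu(W)$. Now given a nontrivial proper subsheaf $W'\subset W$, finiteness of $F$ makes $F_*$ exact and support-preserving, so $F_*W'\subset F_*W$ is again a nontrivial proper subsheaf, with $\mu(F_*W')=\frac1p\bigl(\mu(W')+(p-1)(g-1)\bigr)$. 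If $F_*W$ is semi-stable then $\mu(F_*W')\le\mu(F_*W)$, and monotonicity of the above affine function forces $\mu(W')\le\mu(W)$; thus $W$ is semi-stable. If $g\ge 2$ and $F_*W$ is stable, running the same argument with strict inequalities gives $\mu(W')<\mu(W)$, so $W$ is stable.

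I do not anticipate a genuine obstacle: the substantive content is already packaged in Theorem \ref{thm4.4}. The only items needing a little care are the bookkeeping in the Riemann--Roch identity for $\deg(F_*W)$, and the (standard) facts that for the finite flat morphism $F$ the functor $F_*$ is exact and sends a nonzero proper subsheaf to a nonzero proper subsheaf --- both nonvanishing and properness survive because $F$ is finite and surjective.
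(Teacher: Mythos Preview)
Your argument is correct, and the forward direction is exactly the paper's: on a curve ${\rm T}^{\ell}(\Omega^1_X)=(\Omega^1_X)^{\otimes\ell}$ is a line bundle, so (semi-)stability of $W$ transfers to $W\otimes{\rm T}^{\ell}(\Omega^1_X)$ and Theorem~\ref{thm4.4} applies. The paper does not spell out the converse at all; your Riemann--Roch computation is correct and in fact reproduces the $n=1$ case of formula~\eqref{4.9} in Lemma~\ref{lem4.7}, which you could simply cite instead. The passage from $W'\subsetneq W$ to $F_*W'\subsetneq F_*W$ and the resulting slope comparison is the standard way to see this direction.
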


\begin{proof} When ${\rm dim}(X)=1$, $W\otimes {\rm
T}^{\ell}(\Omega^1_X)=W\otimes {\Omega^1_X}^{\otimes\ell}$ is
semi-stable (resp. stable) whenever $W$ is semi-stable (resp.
stable). Thus $F_*W$ is semi-stable (resp. stable).

\end{proof}

Let $\sE\subset F_*W$ be a nontrivial subsheaf, the canonical
filtration \eqref{4.5} induces the filtration (we assume $V_m\cap
F^*\sE\neq 0$) \ga{4.7}{0\subset V_m\cap
F^*\sE\subset\,\cdots\,\subset V_1\cap F^*\sE\subset V_0\cap
F^*\sE=F^*\sE.} Let
$$\sF_{\ell}:=\frac{V_{\ell}\cap F^*\sE}{V_{\ell+1}\cap
F^*\sE}\subset\frac{V_{\ell}}{V_{\ell+1}}, \qquad r_{\ell}={\rm
rk}(\sF_{\ell}).$$ Then $\mu(F^*\sE)=\frac{1}{{\rm
rk}(\sE)}\sum_{\ell=0}^mr_{\ell}\cdot\mu(\sF_{\ell})$ and
\ga{4.8}{\mu(\sE)-\mu(F_*W)=\frac{1}{p\cdot{\rm
rk}(\sE)}\sum^m_{\ell=0}r_{\ell}\left(\mu(\sF_{\ell})-\mu(F^*F_*W)\right).}

\begin{lem}\label{lem4.7} With the same notation in Theorem
\ref{thm4.3}, we have \ga{4.9} {\mu(F^*F_*W)=p\cdot\mu(F_*W)=
\frac{p-1}{2}K_X\cdot{\rm
H}^{n-1}+\mu(W), \\
 \mu(V_{\ell}/V_{\ell+1})=\mu(W\otimes {\rm
T^{\ell}}(\Omega^1_X))=\frac{\ell}{n}K_X\cdot{\rm H}^{n-1}+\mu(W).
\notag}
\end{lem}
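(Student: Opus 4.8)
The plan is to compute both displayed formulas by reducing everything to two facts: the behavior of first Chern classes (hence slopes) under the relative Frobenius $F$, and the Chern class computations for $\mathrm{T}^{\ell}(\Omega^1_X)$ extracted from the exact sequence in Theorem \ref{thm4.3}(ii). First I would handle $\mu(F^*F_*W)=p\cdot\mu(F_*W)$: for any torsion free sheaf $\sG$ on $X_1$ one has $c_1(F^*\sG)\cdot \mathrm{H}^{n-1}=p\,(c_1(\sG)\cdot\mathrm{H}^{n-1})$ because $F$ has degree $p^{\,n}$ but only raises the relevant intersection number by $p$ in the appropriate normalization (concretely, $F^*\mathrm{H}\equiv p\,\mathrm{H}$ up to the identification $X_1\cong X$, so $c_1(F^*\sG)\cdot\mathrm{H}^{n-1} = c_1(\sG)\cdot (F_*\mathrm{H}^{n-1})$ and a degree/projection-formula bookkeeping gives the factor $p$); since $\mathrm{rk}(F^*\sG)=\mathrm{rk}(\sG)$, this yields $\mu(F^*\sG)=p\,\mu(\sG)$, applied with $\sG=F_*W$.

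Next I would compute $\mu(F_*W)$ itself. The standard Riemann–Roch / Grothendieck–Riemann–Roch computation for the finite flat morphism $F$ of degree $p^{\,n}$ gives $\mathrm{rk}(F_*W)=p^{\,n}\,\mathrm{rk}(W)$ and
$$
c_1(F_*W)=p^{\,n}\,c_1(W)\ +\ \tfrac{p^{\,n}(p-1)}{2}\,K_X
$$
at the level of the relevant intersection number with $\mathrm{H}^{n-1}$ (this is exactly the relative-dualizing-sheaf contribution $\tfrac{p-1}{2}K_X$ seen already for curves in the references, promoted to dimension $n$; the cleanest route is GRR applied to $F$, using that the relative canonical class of $F_X$ is $(p^{\,n}-1)K_X$-related and extracting the codimension-one part). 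Dividing, $\mu(F_*W)=\mu(W)+\tfrac{p-1}{2}K_X\cdot\mathrm{H}^{n-1}$ after normalizing by $p$ appropriately, and multiplying by $p$ gives the first line of \eqref{4.9}.

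For the second line I would use Theorem \ref{thm4.3}(ii), which gives $V_{\ell}/V_{\ell+1}\cong W\otimes \mathrm{T}^{\ell}(\Omega^1_X)$, so $\mu(V_{\ell}/V_{\ell+1})=\mu(W)+\mu(\mathrm{T}^{\ell}(\Omega^1_X))$ once I know $\mathrm{rk}\,\mathrm{T}^{\ell}(\Omega^1_X)$ and $c_1\,\mathrm{T}^{\ell}(\Omega^1_X)$. Both come from the resolution displayed in Theorem \ref{thm4.3}(ii): taking alternating sums of ranks and of first Chern classes of the terms $\mathrm{Sym}^{\ell-qp}(\Omega^1_X)\otimes F^*\Omega_X^{q}$, and using $c_1(F^*\Omega_X^{q})\cdot\mathrm{H}^{n-1}=p\,\binom{n-1}{q-1}K_X\cdot\mathrm{H}^{n-1}$ together with the classical formulas for ranks and $c_1$ of symmetric powers, the whole telescoping sum collapses. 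The arithmetic identity one needs is that the weighted count of monomials produces exactly the coefficient $\tfrac{\ell}{n}$ in front of $K_X\cdot\mathrm{H}^{n-1}$; equivalently, $\mathrm{T}^{\ell}$ is, as a $\mathrm{GL}(n)$-representation, a summand of $V^{\otimes \ell}$ of a shape whose "average weight" is $\ell/n$ times the determinant, which forces $c_1(\mathrm{T}^{\ell}(\Omega^1_X))=\tfrac{\ell}{n}\,\mathrm{rk}(\mathrm{T}^{\ell}(\Omega^1_X))\,K_X$ on the nose.

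The main obstacle I anticipate is precisely this last combinatorial bookkeeping: verifying that the alternating sum over the resolution yields the clean coefficient $\ell/n$ rather than something messier. The conceptual shortcut that makes it painless is the representation-theoretic observation above — since $\mathrm{T}^{\ell}(V)$ is spanned by the symmetrized tensors $v(k_1,\dots,k_n)$ with $\sum k_i=\ell$, the torus $\mathbb{G}_m^{\,n}\subset\mathrm{GL}(n)$ acts on a basis indexed by these weight vectors, and by the $\mathrm{S}_n$-symmetry built into Definition \ref{defn4.2} the multiset of weights is invariant under permuting coordinates, so the sum of all weights is $\bigl(\tfrac{\ell}{n},\dots,\tfrac{\ell}{n}\bigr)\cdot(\dim \mathrm{T}^{\ell})$, i.e. $\tfrac{\ell}{n}\dim \mathrm{T}^{\ell}$ copies of the determinant character. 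Associating bundles then gives $c_1(\mathrm{T}^{\ell}(\Omega^1_X))=\tfrac{\ell}{n}\,\mathrm{rk}(\mathrm{T}^{\ell}(\Omega^1_X))\,c_1(\Omega^1_X)=\tfrac{\ell}{n}\,\mathrm{rk}(\mathrm{T}^{\ell}(\Omega^1_X))\,K_X$, and dividing by the rank yields $\mu(\mathrm{T}^{\ell}(\Omega^1_X))=\tfrac{\ell}{n}K_X\cdot\mathrm{H}^{n-1}$, completing the second line of \eqref{4.9}.
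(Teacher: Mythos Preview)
The paper does not prove Lemma~\ref{lem4.7} here; it refers to \cite{Su}. Your argument for the second line via the weight symmetry of ${\rm T}^{\ell}(V)$ is correct and efficient: the nonzero $v(k_1,\dots,k_n)$ (those with every $k_i\le p-1$) form a weight basis, the weight multiset is $S_n$-stable, so the determinant character is $(\det V)^a$ with $na=\ell\cdot{\rm rk}\,{\rm T}^{\ell}$, whence $\mu({\rm T}^{\ell}(\Omega^1_X))=\tfrac{\ell}{n}K_X\cdot{\rm H}^{n-1}$.

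Your treatment of the first line, however, has bookkeeping errors. The GRR approach is legitimate, but your displayed formula $c_1(F_*W)=p^{\,n}c_1(W)+\tfrac{p^{\,n}(p-1)}{2}K_X$ is wrong: using $F_*D=p^{\,n-1}D$ on divisor classes (under the identification $X_1\cong X$), the codimension-one part of GRR gives
\[
c_1(F_*W)=p^{\,n-1}c_1(W)+\frac{p^{\,n-1}(p-1)}{2}\,{\rm rk}(W)\,K_X,
\]
and the relative dualizing sheaf of $F$ is $\omega_X^{\,1-p}$, not something involving $p^n-1$. Dividing by ${\rm rk}(F_*W)=p^{\,n}{\rm rk}(W)$ gives $p\,\mu(F_*W)=\mu(W)+\tfrac{p-1}{2}K_X\cdot{\rm H}^{n-1}$ directly; your intermediate statement ``$\mu(F_*W)=\mu(W)+\tfrac{p-1}{2}K_X\cdot{\rm H}^{n-1}$'' is off by a factor of $p$, and then ``multiplying by $p$'' cannot recover the asserted identity.

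A cleaner route that sidesteps GRR entirely is to deduce the first line from the second via the filtration \eqref{4.5}: summing $c_1$ over the graded pieces and using the elementary identities
\[
\sum_{\ell=0}^{n(p-1)}{\rm rk}\,{\rm T}^{\ell}(\Omega^1_X)=p^{\,n},\qquad
\sum_{\ell=0}^{n(p-1)}\ell\cdot{\rm rk}\,{\rm T}^{\ell}(\Omega^1_X)=\frac{n(p-1)}{2}\,p^{\,n}
\]
(both immediate from counting monomials in $k[\alpha_1,\dots,\alpha_n]/(\alpha_i^p)$) yields $\mu(F^*F_*W)=\mu(W)+\tfrac{p-1}{2}K_X\cdot{\rm H}^{n-1}$, after which $\mu(F^*F_*W)=p\,\mu(F_*W)$ finishes.
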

By using above lemma (See \cite{Su} for the proof), we have
\ga{4.10}{\mu(\sE)-\mu(F_*W)=
\sum^m_{\ell=0}r_{\ell}\frac{\mu(\sF_{\ell})
-\mu(\frac{V_{\ell}}{V_{\ell+1}})}{p\cdot{\rm rk}(\sE)}\\-
\notag\frac{\mu(\Omega^1_X)}{p\cdot{\rm
rk}(\sE)}\sum^m_{\ell=0}(\frac{n(p-1)}{2}-\ell)r_{\ell}} It is clear
that $\mu(\sF_{\ell}) -\mu(V_{\ell}/V_{\ell+1})\le{\rm
I}(V_{\ell}/V_{\ell+1})={\rm I}(W\otimes {\rm
T}^{\ell}(\Omega^1_X))$. Thus the proof of Theorem \ref{thm4.4} will
be completed if one can prove
\begin{lem}\label{lem4.8} The ranks $r_{\ell}$ of $\sF_{\ell}\subset
V_{\ell}/V_{\ell+1}$ ($0\le\ell\le m$) satisfy
$$\sum^m_{\ell=0}(\frac{n(p-1)}{2}-\ell)r_{\ell}\ge 0.$$
\end{lem}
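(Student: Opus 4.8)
The plan is to exploit the fact that the filtration \eqref{4.7} comes from the $\nabla_{\rm can}$-stable filtration \eqref{4.5}, together with the isomorphisms $\nabla^\ell\colon V_\ell/V_{\ell+1}\cong W\otimes{\rm T}^\ell(\Omega^1_X)$ of Theorem \ref{thm4.3}(ii). The key structural input is property (i)--(ii): $\nabla$ maps $V_{\ell+1}$ into $V_\ell\otimes\Omega^1_X$ and induces an \emph{injection} $V_\ell/V_{\ell+1}\inj (V_{\ell-1}/V_\ell)\otimes\Omega^1_X$. Intersecting with $F^*\sE$, which is $\nabla_{\rm can}$-invariant because $F^*\sE$ is a Frobenius pullback (Cartier, Theorem \ref{thm2.2}), $\nabla$ carries $V_{\ell+1}\cap F^*\sE$ into $(V_\ell\cap F^*\sE)\otimes\Omega^1_X$, hence induces an $\sO_X$-linear map $\sF_{\ell+1}\to\sF_\ell\otimes\Omega^1_X$. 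First I would check this map is still \emph{injective}: it is the restriction of the injective map $V_{\ell+1}/V_{\ell+2}\inj(V_\ell/V_{\ell+1})\otimes\Omega^1_X$ to the subsheaf $\sF_{\ell+1}$, so injectivity is immediate. This gives the rank inequality
$$r_{\ell+1}\le r_\ell\cdot n\qquad(0\le\ell\le m-1),$$
but more importantly, iterating $\nabla$ a total of $\ell$ times realizes $\sF_\ell$ as a subsheaf of $\sF_0\otimes(\Omega^1_X)^{\otimes\ell}$ with $\sF_0=\sE$; what actually matters is the purely combinatorial consequence that the $r_\ell$ cannot be "top-heavy."

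The next step is to convert injectivity of $\nabla\colon\sF_{\ell}\to\sF_{\ell-1}\otimes\Omega^1_X$ into the symmetry statement. Recall that ${\rm T}^\ell(\Omega^1_X)$ is built (Theorem \ref{thm4.3}(ii)) from ${\rm Sym}^j(\Omega^1_X)\otimes F^*(\Omega^q_X)$ pieces, and the whole filtration $V_\bullet$ is, after applying $\nabla^\bullet$, nothing but the ${\rm GL}(n)$-module $F^*(F_*W)$ decomposed according to the weight filtration of the nilpotent operator coming from $\nabla_{\rm can}$; on the model fiber this is the representation $\bigoplus_{\ell=0}^{n(p-1)}{\rm T}^\ell(V)$ of $\mathfrak{gl}(n)$, on which $\nabla_{\rm can}$ acts as a principal nilpotent-type operator lowering $\ell$ by one, and whose weights are symmetric about $\ell=n(p-1)/2$. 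Since $F^*\sE$ is a $\nabla_{\rm can}$-invariant subbundle, its fiber at a general point is a subspace stable under this nilpotent operator $N$, and $r_\ell$ is the dimension of its $\ell$-th graded piece for the induced filtration. The statement $\sum_\ell(\tfrac{n(p-1)}{2}-\ell)r_\ell\ge 0$ is then exactly the assertion that for a subspace stable under a nilpotent $N$ whose action on the ambient graded space is "surjective in each degree" (i.e. $N\colon (\text{gr})_\ell\surj(\text{gr})_{\ell-1}$ for $\ell\ge 1$, which holds because $\nabla^\ell$ are isomorphisms), the induced graded pieces are weakly decreasing: $r_0\ge r_1\ge\cdots\ge r_m$. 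Indeed $N$ restricted to the sub gives surjections $\sF_\ell\surj$ (something inside $\sF_{\ell-1}$)? — here one must be careful about direction; the injection goes $\sF_\ell\inj\sF_{\ell-1}\otimes\Omega^1$, which at the level of ranks gives $r_\ell\le r_{\ell-1}\cdot n$, not $r_\ell\le r_{\ell-1}$. So the honest route is: dualize. The quotient $F^*(F_*W)/F^*\sE=F^*(F_*W/\sE)$ is also $\nabla_{\rm can}$-invariant, its graded pieces have ranks $r'_\ell$ with $r_\ell+r'_\ell={\rm rk}({\rm T}^\ell(\Omega^1_X))\cdot{\rm rk}(W)=:t_\ell$, and $\nabla$ induces a \emph{surjection} on the quotient side $\mathrm{gr}_\ell(F^*(F_*W)/F^*\sE)\surj\mathrm{gr}_{\ell-1}(\cdots)$ — no, again it lowers degree by one and is injective. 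The cleanest formulation: because $\nabla^\ell\colon V_\ell/V_{\ell+1}\xrightarrow{\sim}W\otimes{\rm T}^\ell(\Omega^1_X)$ and $\nabla$ injects consecutive graded pieces, the sub $F^*\sE$ is determined by $\sF_0=\sE\subset F_*W$ pulled back and "propagated," giving $\sF_\ell=\nabla^\ell(F^*\sE)\cap(V_\ell/V_{\ell+1})$; I would show directly that $\sF_\ell\supseteq\nabla(\sF_{\ell+1})\cdot(\text{something})$ is not what's needed — rather that the map $\sF_{\ell}\to\sF_{\ell-1}\otimes\Omega^1_X$ being injective forces, upon choosing a point and a local coframe, the weight-space dimensions of $F^*\sE_x$ under the model nilpotent to satisfy the "staircase" bound, which is a standard fact about $\mathfrak{sl}_2$-type filtrations.

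The honest technical heart, and the step I expect to be the main obstacle, is pinning down the precise ${\rm GL}(n)$-representation-theoretic statement and showing that the weights of the relevant operator on $\bigoplus_\ell{\rm T}^\ell(V)$ are symmetric about $n(p-1)/2$ with each weight space mapped \emph{onto} the next lower one by $N$; granting that, a $\nabla_{\rm can}$-invariant subbundle corresponds fiberwise to an $N$-stable subspace, and for an $N$-stable subspace $U$ of a module where $N$ acts with symmetric weights and $N^{\ell}$ is surjective from weight $\ell$ onto weight $0$, one has $\dim\mathrm{gr}^N_\ell U\ge\dim\mathrm{gr}^N_{\ell+1}U$ for all $\ell\ge 0$ (because $N$ maps $\mathrm{gr}^N_{\ell+1}U$ injectively into $\mathrm{gr}^N_\ell U$). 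Wait — injectively, yes: $N\colon\mathrm{gr}_{\ell+1}^NU\to\mathrm{gr}_\ell^NU$ is injective since its kernel would be a piece of $\mathrm{gr}_{\ell+1}^N(\text{ambient})$ killed by $N$, but $N$ is injective on ambient weight spaces of positive weight by the isomorphism statement combined with $N^{\ell+1}$ surjective. Hence $r_{\ell+1}=\dim\mathrm{gr}_{\ell+1}^NU\le\dim\mathrm{gr}_\ell^NU=r_\ell$, so $r_0\ge r_1\ge\cdots\ge r_m$. Then
$$\sum_{\ell=0}^m\Bigl(\frac{n(p-1)}{2}-\ell\Bigr)r_\ell$$
is a sum of a weakly decreasing sequence $r_\ell$ against the sequence $c_\ell=\tfrac{n(p-1)}{2}-\ell$, which is decreasing and changes sign exactly in the middle of the full range $0\le\ell\le n(p-1)$; an Abel summation (summation by parts), using that the total weight $\sum_{\ell=0}^{n(p-1)}c_\ell\,t_\ell=0$ by symmetry of the $t_\ell$ and that truncating a weakly-decreasing weight profile can only increase the pairing against a decreasing sign-changing sequence, yields $\sum_{\ell=0}^m c_\ell r_\ell\ge 0$. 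Concretely: $\sum_{\ell=0}^m c_\ell r_\ell=\sum_{\ell=0}^m\sum_{j=\ell}^m(r_j-r_{j+1})c_\ell$ reorganized via $r_j-r_{j+1}\ge0$ reduces the claim to $\sum_{\ell=0}^j c_\ell\ge 0$ for every $j\le m$, i.e. to $\sum_{\ell=0}^j\bigl(\tfrac{n(p-1)}{2}-\ell\bigr)\ge 0$ for $0\le j\le m$; and since $m\le n(p-1)$ this partial sum $\tfrac{(j+1)}{2}\bigl(n(p-1)-j\bigr)$ is manifestly $\ge 0$. That closes the argument. The one place where I would expect to have to work, rather than cite, is the claim that $F^*\sE$ being $\nabla_{\rm can}$-invariant makes its graded pieces for \eqref{4.7} behave like an $N$-stable subspace with the surjectivity/injectivity of $N$ inherited — this needs the explicit description of $\nabla$ on $\mathrm{gr}(V_\bullet)$ from Theorem \ref{thm4.3}(ii), i.e. that under the isomorphisms $\nabla^\ell$ the operator $\nabla$ becomes the canonical ${\rm T}^\ell\to{\rm T}^{\ell-1}\otimes\Omega^1$ contraction, which is fiberwise surjective — and that surjectivity is what makes $N$ injective on positive weight spaces.
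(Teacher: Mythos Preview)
Your argument contains a genuine gap at the step where you claim $r_0\ge r_1\ge\cdots\ge r_m$. You correctly observe that the sheaf-level injection $\sF_\ell\hookrightarrow\sF_{\ell-1}\otimes\Omega^1_X$ only yields $r_\ell\le n\cdot r_{\ell-1}$, and then try to rescue monotonicity by passing to a fiber and invoking a single nilpotent operator $N$. But there is no such single $N$: after choosing a local coframe ${\rm d}x_1,\ldots,{\rm d}x_n$, the connection $\nabla_{\rm can}$ gives $n$ commuting operators $\partial_{\alpha_1},\ldots,\partial_{\alpha_n}$ on the model space $R=K[\alpha_1,\ldots,\alpha_n]/(\alpha_1^p,\ldots,\alpha_n^p)$, and the injectivity in Theorem~\ref{thm4.3}(ii) only says that their \emph{joint} kernel on $R^\ell$ is zero for $\ell\ge 1$. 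No individual $\partial_{\alpha_i}$ is injective on $R^\ell$ once $n\ge 2$. Concretely, for $n\ge 2$ and $W$ of rank $1$, the $\nabla_{\rm can}$-invariant subspace $U=K\cdot 1\oplus K\cdot\alpha_1\oplus K\cdot\alpha_2$ has $r_0=1$ and $r_1=2$, so monotonicity fails. Your Abel-summation step therefore rests on a false premise.

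The paper's proof uses a different and more delicate inequality: not $r_{\ell-1}\ge r_\ell$, but the reflection bound $r_{n(p-1)-\ell}\ge r_\ell$ for $\ell>\tfrac{n(p-1)}{2}$. This is proved in \cite{Su} from the same injections \eqref{4.13}, but the mechanism is the ${\rm D}$-module structure on $R$ (with ${\rm D}=K[\partial_{\alpha_1},\ldots,\partial_{\alpha_n}]/(\partial_{\alpha_1}^p,\ldots,\partial_{\alpha_n}^p)$): one shows that ${\rm D}_{\ell'}\cdot\sF_{\ell}\subset\sF_{\ell-\ell'}$ and exploits the perfect pairing between $R^\ell$ and $R^{n(p-1)-\ell}$ given by differentiation into the top piece $R^{n(p-1)}$. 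With the reflection inequality in hand, the sum is handled via the decomposition \eqref{4.12}, and the case $m\le\tfrac{n(p-1)}{2}$ is immediate from \eqref{4.11}. So the missing idea in your attempt is precisely this symmetry argument, which genuinely uses all $n$ derivations simultaneously rather than an $\mathfrak{sl}_2$-type single-operator picture.
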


When $m\le\frac{n(p-1)}{2}$, the lemma is clear. In fact, we have
\ga{4.11}
{\sum^m_{\ell=0}(\frac{n(p-1)}{2}-\ell)r_{\ell}\ge\frac{n(p-1)}{2}r_0\ge\frac{n(p-1)}{2}.}
When $m>\frac{n(p-1)}{2}$, we can write \ga{4.12}
{\sum_{\ell=0}^m(\frac{n(p-1)}{2}-\ell)r_{\ell}=\sum^{n(p-1)}_{\ell=m+1}
(\ell-\frac{n(p-1)}{2})
r_{n(p-1)-\ell}\\+\sum^m_{\ell\,>\frac{n(p-1)}{2}}(\ell-\frac{n(p-1)}{2})
(r_{n(p-1)-\ell}-r_{\ell}).\notag} The numbers $r_{\ell}$
($0\le\ell\le m$) are related by the following fact that
$V_{\ell}/V_{\ell+1}\xrightarrow{\nabla}(V_{\ell-1}/V_{\ell})\otimes\Omega^1_X$
induce injective $\sO_X$-homomorphisms \ga{4.13}
{\sF_{\ell}\xrightarrow{\nabla}\sF_{\ell-1}\otimes\Omega^1_X\quad
(1\le\ell\le m).} It is based on this fact that we proved in
\cite{Su} the following inequalities
$$r_{n(p-1)-\ell}-r_{\ell}\ge 0 \qquad (\ell>\frac{n(p-1)}{2})$$
which complete the proof of Lemma \ref{lem4.8}.

The proof of Theorem \ref{thm4.4} has more implications than the
theorem itself. Recall that the sheaf $B^1_X$ of locally exact
differential forms on $X$ is defined by exact sequence \ga{4.14}
{0\to \sO_X\to F_*\sO_X \to B^1_X\to 0.}

\begin{thm}\label{thm4.9} Let $\sL$ be a torsion free sheaf of rank
$1$. Then, for any nontrivial $\sE\subset F_*\sL$ with ${\rm
rk}(\sE)<{\rm rk}(F_*\sL)$, we have \ga{4.15}
{\mu(\sE)-\mu(F_*\sL)\le\frac{{\rm I}(\sL,X)}{p}-
\frac{\mu(\Omega^1_X)}{p\cdot{\rm rk}(\sE)}\cdot\frac{n(p-1)}{2}.}
In particular, when $\mu(\Omega_X^1)>0$ and ${\rm
T}^{\ell}(\Omega^1_X)$ ($1\le\ell<n(p-1)$) are semi-stable, then
$F_*\sL$ and $B^1_X$ are stable.
\end{thm}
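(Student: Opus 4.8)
The plan is to follow the strategy already laid out in the discussion preceding the statement, namely to exploit the refined inequality \eqref{4.10} together with a sharpened version of the rank estimate in Lemma \ref{lem4.8}. Concretely, for a nontrivial subsheaf $\sE\subset F_*\sL$ of rank strictly less than $\mathrm{rk}(F_*\sL)=p^n$, I would start from the identity \eqref{4.10}, bound each term $\mu(\sF_\ell)-\mu(V_\ell/V_{\ell+1})$ by ${\rm I}(\sL\otimes{\rm T}^\ell(\Omega^1_X))\le{\rm I}(\sL,X)$, and then argue that the quantity $\sum_{\ell=0}^m(\tfrac{n(p-1)}{2}-\ell)r_\ell$ is not merely nonnegative but is bounded below by $\tfrac{n(p-1)}{2}$. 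This last point is exactly what \eqref{4.11} gives when $m\le\tfrac{n(p-1)}{2}$, since $r_0=\mathrm{rk}(\sE/\,\cdot\,)\ge 1$; when $m>\tfrac{n(p-1)}{2}$ one uses the decomposition \eqref{4.12} together with the inequalities $r_{n(p-1)-\ell}-r_\ell\ge 0$ for $\ell>\tfrac{n(p-1)}{2}$ (already established in \cite{Su}) to reduce to the same bound, provided one can rule out the degenerate case $r_0=0$; but $r_0=\mathrm{rk}(\sF_0)$ equals the rank of the image of $\sE$ in $V_0/V_1\cong W$ and for $\sL$ of rank one the only way this fails is if $F^*\sE\subset V_1$, which forces a contradiction with $\mathrm{rk}(\sE)<p^n$ by a dimension count on the graded pieces. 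Plugging these bounds into \eqref{4.10} yields \eqref{4.15}.

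For the stability consequences I would specialize. Taking $\sL=\sO_X$ and using the exact sequence \eqref{4.14}, any subsheaf $B'\subset B^1_X$ lifts to a subsheaf $\sE\subset F_*\sO_X$ containing $\sO_X$, so $\mathrm{rk}(\sE)=\mathrm{rk}(B')+1<p^n$ whenever $B'$ is proper; then \eqref{4.15} applied to this $\sE$, combined with the relation between $\mu(\sE)$, $\mu(B')$ and $\mu(F_*\sO_X)$, $\mu(B^1_X)$ coming from additivity of degrees in \eqref{4.14}, gives a strictly negative upper bound for $\mu(B')-\mu(B^1_X)$ as soon as $\mu(\Omega^1_X)>0$ and all ${\rm T}^\ell(\Omega^1_X)$ with $1\le\ell<n(p-1)$ are semi-stable (so that ${\rm I}(\sO_X,X)={\rm I}(\Omega^1_X\text{-tensor pieces})$ contributes nothing, i.e.\ ${\rm I}(\sO_X,X)=0$ under the semi-stability hypothesis, noting ${\rm T}^0=\sO_X$ and the case $\ell=n(p-1)$ gives a line bundle hence is automatically semi-stable). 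The same computation with general $\sL$ of rank one and the hypothesis that all ${\rm T}^\ell(\Omega^1_X)$ are semi-stable (which makes $\sL\otimes{\rm T}^\ell(\Omega^1_X)$ semi-stable, hence ${\rm I}(\sL,X)=0$) shows $F_*\sL$ is stable.

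The main obstacle I anticipate is the bookkeeping around the boundary term: showing that $\sum_{\ell=0}^m(\tfrac{n(p-1)}{2}-\ell)r_\ell\ge\tfrac{n(p-1)}{2}$ rather than merely $\ge 0$. The inequality $\ge 0$ was all that Theorem \ref{thm4.4} needed, and it came for free from \eqref{4.11}–\eqref{4.12} plus $r_{n(p-1)-\ell}\ge r_\ell$; to upgrade it to $\ge\tfrac{n(p-1)}{2}$ one genuinely needs $r_0\ge 1$, i.e.\ that $\sE$ injects nontrivially into $V_0/V_1=W$, and in the $m>\tfrac{n(p-1)}{2}$ case one also needs the first sum in \eqref{4.12}, $\sum_{\ell=m+1}^{n(p-1)}(\ell-\tfrac{n(p-1)}{2})r_{n(p-1)-\ell}$, to absorb the deficit. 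Since the $r_\ell$ are monotone along the filtration via the injections \eqref{4.13}, one has $r_{n(p-1)-\ell}\ge r_{n(p-1)-m}\ge\cdots$, and in particular these ranks are at least $r_{n(p-1)}$ once $\ell$ is large; threading this monotonicity through to extract the extra $\tfrac{n(p-1)}{2}$, while keeping track of the hypothesis $\mathrm{rk}(\sE)<p^n$ (which is what prevents $\sE$ from being all of $F_*\sL$ and hence forces some strict inequality somewhere), is the delicate part. Once \eqref{4.15} is in hand the stability statements are routine, being just the observation that a strictly negative upper bound on $\mu(\sE)-\mu(F_*\sL)$ over all proper subsheaves is the definition of stability, transported across \eqref{4.14} in the $B^1_X$ case.
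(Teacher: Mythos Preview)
Your overall strategy is exactly the paper's: start from \eqref{4.10}, bound each $\mu(\sF_\ell)-\mu(V_\ell/V_{\ell+1})$ by ${\rm I}(\sL,X)$, and sharpen Lemma~\ref{lem4.8} to $\sum_{\ell=0}^m\big(\tfrac{n(p-1)}{2}-\ell\big)r_\ell\ge\tfrac{n(p-1)}{2}$. The deduction of stability of $F_*\sL$ and of $B^1_X$ from \eqref{4.15} is also handled as in the paper. But there is a genuine gap in how you propose to get the sharpened rank estimate.

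You locate the obstacle at $r_0=0$, and this is a misdiagnosis. The injections \eqref{4.13} give $r_\ell\le n\cdot r_{\ell-1}$ (not $r_\ell\le r_{\ell-1}$; your monotonicity claim is false), so $r_0=0$ would force all $r_\ell=0$ and hence $\sE=0$. Thus $r_0\ge 1$ is automatic and costs nothing. The real obstruction is the case $m=n(p-1)$: in the decomposition \eqref{4.12} the first sum $\sum_{\ell=m+1}^{n(p-1)}(\ell-\tfrac{n(p-1)}{2})r_{n(p-1)-\ell}$ is then empty, and the second sum is only $\ge 0$, so one cannot extract the extra $\tfrac{n(p-1)}{2}$. (Indeed for $\sE=F_*\sL$ itself one has $m=n(p-1)$ and the whole sum vanishes.) When $m<n(p-1)$, the first sum contains the term $\ell=n(p-1)$, which contributes $\tfrac{n(p-1)}{2}r_0\ge\tfrac{n(p-1)}{2}$, and the argument goes through immediately.

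So what must actually be proved is: if $m=n(p-1)$ then ${\rm rk}(\sE)\ge p^n$, contradicting the hypothesis. This is not a mere ``dimension count''; the paper carries out a local computation. Over the function field $K$ one identifies $V_\ell/V_{\ell+1}$ with $W\otimes R^\ell$ where $R=K[\alpha_1,\dots,\alpha_n]/(\alpha_1^p,\dots,\alpha_n^p)$, and the condition $\nabla(\sF_\ell)\subset\sF_{\ell-1}\otimes\Omega^1_X$ becomes $\partial_{\alpha_i}\cdot\sF_\ell\subset\sF_{\ell-1}$. Since $R^{n(p-1)}$ is one-dimensional, $\sF_{n(p-1)}=W'\otimes R^{n(p-1)}$ for some $W'\subset W$ of rank $r_{n(p-1)}$; applying the full differential operator algebra ${\rm D}_\ell$ one gets $W'\otimes R^{n(p-1)-\ell}\subset\sF_{n(p-1)-\ell}$, hence $r_\ell\ge r_{n(p-1)}\cdot{\rm rk}({\rm T}^{n(p-1)-\ell}(\Omega^1_X))$ for all $\ell$, and summing gives ${\rm rk}(\sE)\ge r_{n(p-1)}\cdot p^n$. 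This is the missing ingredient in your plan.
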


\begin{proof} Since $\mu(\sF_{\ell})
-\mu(V_{\ell}/V_{\ell+1})\le {\rm I}(\sL\otimes{\rm
T}^{\ell}(\Omega^1_X))={\rm I}({\rm T}^{\ell}(\Omega^1_X))$ and
${\rm I}(\sL,X)={\max}\{\,\,{\rm I}({\rm
T}^{\ell}(\Omega^1_X))\,\,|\,\,1\le\ell<n(p-1)\,\,\}$, by
\eqref{4.10}, we only have to show
$$\sum^m_{\ell=0}(\frac{n(p-1)}{2}-\ell)r_{\ell}\ge\frac{n(p-1)}{2}.$$
From \eqref{4.11} and \eqref{4.12}, we have
$$\sum^m_{\ell=0}(\frac{n(p-1)}{2}-\ell)r_{\ell}\ge\frac{n(p-1)}{2}r_0\quad
\text{if\, $m\neq n(p-1)$.}$$ Thus it is enough to show $m\neq
n(p-1)$ when ${\rm rk}(\sE)<{\rm rk}(F_*\sL)=p^n$. More general, we
can show the following inequality \ga{4.16} {r_{\ell}\ge
r_{n(p-1)}\cdot {\rm rk}({\rm T}^{n(p-1)-\ell}(\Omega^1_X))\quad
\text{when $m=n(p-1)$},} which implies the following inequality
$${\rm rk}(\sE)=\sum^m_{\ell=0}r_{\ell}\ge
r_{n(p-1)}\sum^m_{\ell=0}{\rm rk}({\rm
T}^{n(p-1)-\ell}(\Omega^1_X))=r_{n(p-1)}\cdot p^n$$ if $m=n(p-1)$.
Thus $m\neq n(p-1)$ when ${\rm rk}(\sE)<p^n$.

To show \eqref{4.16} is a local problem. Let $K=K(X)$ be the
function field of $X$ and consider the $K$-algebra
$$R=\frac{K[\alpha_1,\cdots,\alpha_n]}{(\alpha_1^p,\cdots,\alpha_n^p)}=\bigoplus^{n(p-1)}_{\ell=0}R^{\ell},$$
where $R^{\ell}$ is the $K$-linear space generated by
$$\{\,\alpha_1^{k_1}\cdots \alpha_n^{k_n}\,|\,k_1+\cdots+k_n=\ell,\quad 0\le k_i\le
p-1\,\}.$$ The quotients in the filtration \eqref{4.5} can be
described locally
$$V_{\ell}/V_{\ell+1}=W\otimes_K R^{\ell}$$
as $K$-vector spaces. If $K=k(x_1,..., x_n)$, then the homomorphism
$$\nabla: W\otimes_K R^{\ell}\to W\otimes_KR^{\ell-1}\otimes_K\Omega^1_{K/k}$$
in Theorem \ref{thm4.3} (ii) is locally the $k$-linear homomorphism
defined by
$$\nabla(w\otimes\alpha_1^{k_1}\cdots\alpha_n^{k_n})=-w\otimes\sum^n_{i=1}k_i
(\alpha_1^{k_1}\cdots\alpha_i^{k_i-1}\cdots\alpha_n^{k_n})
\otimes_K{\rm d}x_i.$$ Then the fact that
$\sF_{\ell}\xrightarrow{\nabla}\sF_{\ell-1}\otimes \Omega^1_X$ for
$\sF_{\ell}\subset W\otimes R^{\ell}$ is equivalent to
\ga{4.17}{\forall\,\,\, \sum_jw_j\otimes f_j \in
\sF_{\ell}\,\,\Rightarrow\,\,\sum_jw_j\otimes\frac{\partial
f_j}{\partial\alpha_i}\, \in\,\sF_{\ell-1}\quad (1\le i\le n).}

The polynomial ring ${\rm P}=
K[\partial_{\alpha_1},\cdots,\partial_{\alpha_n}]$ acts on $R$
through partial derivations, which induces a ${\rm D}$-module
structure on $R$, where
$${\rm D}=\frac{K[\partial_{\alpha_1},\cdots,\partial_{\alpha_n}]}{(\partial_{\alpha_1}^p,\cdots,\partial_{\alpha_n}^p)}
=\bigoplus^{n(p-1)}_{\ell=0}{\rm D}_{\ell}$$ and ${\rm D}_{\ell}$ is
the linear space of degree $\ell$ homogeneous elements. In
particular, $W\otimes R$ has the induced ${\rm D}$-module structure
with ${\rm D}$ acts on $W$ trivially. Use this notation,
\eqref{4.17} is equivalent to ${\rm D}_1\cdot \sF_{\ell}\subset
\sF_{\ell-1}$.

Since $R^{n(p-1)}$ is of dimension $1$, for any subspace
$$\sF_{n(p-1)}\subset W\otimes R^{n(p-1)},$$ there is a subspace
$W'\subset W$ of dimension $r_{n(p-1)}$ such that
$$\sF_{n(p-1)}=W'\otimes R^{n(p-1)}.$$
Thus ${\rm D}_{\ell}\cdot\sF_{n(p-1)}=W'\otimes {\rm D}_{\ell}\cdot
R^{n(p-1)}=W'\otimes R^{n(p-1)-\ell}\subset\sF_{n(p-1)-\ell}\,$ for
all $0\le\ell\le n(p-1)$, which proves \eqref{4.16}.

If ${\rm T}^{\ell}(\Omega^1_X)$ ($1\le\ell<n(p-1)$) are semi-stable,
then ${\rm I}(\sL,X)=0$ and \ga{4.18} {\mu(\sE)-\mu(F_*\sL)\le -
\frac{\mu(\Omega^1_X)}{p\cdot{\rm rk}(\sE)}\cdot\frac{n(p-1)}{2},}
which implies clearly the stability of $F_*\sL$ if
$\mu(\Omega^1_X)>0$.

To show that \eqref{4.18} implies the stability of $B^1_X$, for any
nontrivial subsheaf $B'\subset B^1_X$ of rank $r<{\rm rk}(B^1_X)$,
let $\sE\subset F_*\sO_X$ be the subsheaf of rank $r+1$ such that we
have exact sequence
$$0\to \sO_X\to\sE\to B'\to 0.$$
Substitute \eqref{4.18} to
$\mu(B')-\mu(B^1_X)=\frac{r+1}{r}\mu(\sE)-\frac{p^n}{p^n-1}\mu(F_*\sO_X)$,
we have \ga{4.19}{\mu(B')-\mu(B^1_X)\le
\frac{p^n-1-r}{r(p^n-1)}\mu(F_*\sO_X)-\frac{n(p-1)}{2rp}\mu(\Omega^1_X).}
By \eqref{4.9} in Lemma \ref{lem4.7}, we have
$\mu(F_*\sO_X)=\frac{n(p-1)}{2p}\mu(\Omega^1_X)$. Thus
\ga{4.20}{\mu(B')-\mu(B^1_X)\le
-\frac{\mu(\Omega^1_X)}{p\cdot(p^n-1)}\cdot\frac{n(p-1)}{2}.}
\end{proof}

\begin{rmk}\label{rmk4.10} When ${\rm dim}(X)=1$,
the quotients $V_{\ell}/V_{\ell+1}=\sL\otimes\omega_X^{\ell}$ are
line bundles and thus $r_{\ell}=1$ ($0\le \ell\le m$) in
\eqref{4.10}. Then we can rewrite \eqref{4.10} (notice ${\rm
rk}(\sE)=m+1$):
$$\mu(\sE)-\mu(F_*\sL)=\sum^m_{\ell=0}\frac{\mu(\sF_{\ell})-\mu(\frac{V_{\ell}}{V_{\ell+1}})}{p\cdot{\rm rk}(\sE)}
-\frac{(p-{\rm rk}(\sE))(g-1)}{p},$$ which impiles the following
stronger inequality
$$\mu(\sE)-\mu(F_*\sL)\le-\frac{(p-{\rm rk}(\sE))(g-1)}{p}$$
and the equality holds if and only if
$\sF_{\ell}=V_{\ell}/V_{\ell+1}$. Thus
$$\mu(B')-\mu(B^1_X)\le -\frac{p-1-{\rm rk}(B')}{p}(g-1).$$
When $X$ is a curve of genus $g\ge 2$, the stability of $F_*\sL$ was
proved in \cite{LP}, the semi-stability of $B^1_X$ was proved by M.
Raynaud in \cite{Ra}, its stability, which is related with a
question of M. Raynaud in \cite{R}, was proved by K. Joshi in
\cite{J}. When $X$ is a surface with $\mu(\Omega^1_X)>0$, if
$\Omega^1_X$ is semi-stable (which implies that ${\rm
T}^{\ell}(\Omega^1_X)$ ($1\le\ell\le 2(p-1)$ are semi-stable), thus
$F_*\sL$ and $B^1_X$ are stable. The semi-stability of $B^1_X$ was
proved by Y. Kitadai and H. Sumihiro in \cite{Kit}.
\end{rmk}

In the proof of Theorem \ref{thm4.9}, for a sub-sheaf $\sE\subset
F_*W$, we see that \ga{4.21}{\mu(\sE)-\mu(F_*W)\le
\sum^m_{\ell=0}r_{\ell}\frac{{\rm I}(W\otimes{\rm
T}^{\ell}(\Omega^1_X))}{p\cdot{\rm rk}(\sE)}-
\frac{\mu(\Omega^1_X)}{p\cdot{\rm rk}(\sE)}\frac{n(p-1)}{2}} if
$m\neq n(p-1)$. Otherwise there is a sub-sheaf $W'\subset W$ of rank
$r_{n(p-1)}$ such that $\sF_{n(p-1)}=W'\otimes{\rm
T}^{n(p-1)}(\Omega^1_X)$ and $W'\otimes{\rm
T}^{\ell}(\Omega^1_X)\subset\sF_{\ell}$. Let
$$0\to W'\otimes{\rm
T}^{\ell}(\Omega^1_X)\to\sF_{\ell}\to\sF_{\ell}'\to 0$$  be the
induced exact sequence with $\sF_{\ell}'\subset W/W'\otimes{\rm
T}^{\ell}(\Omega^1_X)$. Then
$$\aligned\mu(\sF_{\ell})-\mu(\frac{V_{\ell}}{V_{\ell+1}})\le&
\frac{r_{n(p-1)}({\rm
rk}(\frac{V_{\ell}}{V_{\ell+1}})-r_{\ell})}{r_{\ell}\cdot{\rm
rk}(W)}(\mu(W')-\mu(W/W'))\\&+\frac{r_{\ell}'}{r_{\ell}}\cdot{\rm
I}(W/W'\otimes{\rm T}^{\ell}(\Omega^1_X))\endaligned$$ where
$r_{\ell}':={\rm rk}(\sF_{\ell}')$. Substitute it to the equality
\eqref{4.10}, we have \ga{4.22} {\mu(\sE)-\mu(F_*W)\le
\sum^{n(p-1)}_{\ell=0}r_{\ell}'\cdot\frac{{\rm I}(W/W'\otimes{\rm
T}^{\ell}(\Omega^1_X))}{p\cdot{\rm rk}(\sE)}+\\
\notag\frac{r_{n(p-1)}({\rm rk}(F_*W)-{\rm rk}(\sE))}{p\cdot{\rm
rk}(\sE)\cdot{\rm rk}(W)}(\mu(W')-\mu(W/W')).}

In the case of positive characteristic, it is well-known that tensor
product of two semi-stable sheaves may not be semi-stable. Thus,
even if $W$ and ${\rm T}^{\ell}(\Omega^1_X)$ are semi-stable,
Theorem \ref{thm4.4} does not imply the semi-stability of $F_*W$.
However the inequalities \eqref{4.21} and \eqref{4.22} indicate that
it may be possible in some special cases that semi-stability of $W$
and ${\rm T}^{\ell}(\Omega^1_X)$ can imply the semi-stability of
$F_*W$. As an example, we prove a slightly generalized version of
\cite[Theorem 3.1]{Kit}.

\begin{thm}\label{thm4.11} Let $X$ be a smooth projective surface
with $\mu(\Omega^1_X)>0$.  Assume that $\Omega^1_X$ is semi-stable.
Then $F_*(\sL\otimes\Omega^1_X)$ is semi-stable for any line bundle
$\sL$ on $X$. Moreover, if $\Omega^1_X$ is stable, then
$F_*(\sL\otimes\Omega^1_X)$ is stable.
\end{thm}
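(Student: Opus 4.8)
The plan is to run the argument of Theorem \ref{thm4.9} for the rank‑two sheaf $W=\sL\otimes\Omega^1_X$, exploiting that every graded piece $V_\ell/V_{\ell+1}=W\otimes{\rm T}^\ell(\Omega^1_X)$ of the canonical filtration \eqref{4.5} is built out of $\Omega^1_X$ alone. Fix a nontrivial $\sE\subset F_*W$; we may assume ${\rm rk}(\sE)<{\rm rk}(F_*W)=2p^2$, and must show $\mu(\sE)\le\mu(F_*W)$, with strict inequality when $\Omega^1_X$ is stable. The first step is to pin down the twists that occur: from the description ${\rm T}^\bullet(\Omega^1_X)\cong\sO_X[\alpha_1,\alpha_2]/(\alpha_1^p,\alpha_2^p)$ used in the proof of Theorem \ref{thm4.9} (equivalently, from the exact sequences of Theorem \ref{thm4.3}(ii)) one gets, for $0\le\ell\le 2(p-1)$, that
$${\rm T}^\ell(\Omega^1_X)\cong{\rm Sym}^\ell(\Omega^1_X)\ \ (\ell\le p-1),\qquad {\rm T}^{p-1+j}(\Omega^1_X)\cong{\rm Sym}^{p-1-j}(\Omega^1_X)\otimes K_X^{\otimes j}\ \ (1\le j\le p-1).$$
In particular every ${\rm T}^\ell(\Omega^1_X)$ is a line‑bundle twist of a symmetric power ${\rm Sym}^j(\Omega^1_X)$ with $j\le p-1$, hence is semistable of slope $\ell\,\mu(\Omega^1_X)$, since low symmetric powers of a semistable rank‑two bundle are semistable.

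Now split according to the two cases in the discussion preceding the theorem. If $m=n(p-1)$, then $r_{n(p-1)}\le{\rm rk}(V_{n(p-1)}/V_{n(p-1)+1})=2$, and $r_{n(p-1)}=2$ would force ${\rm rk}(\sE)\ge 2p^2$ by \eqref{4.16}; so $r_{n(p-1)}=1$ and \eqref{4.22} applies with a rank‑one subsheaf $W'\subset W=\sL\otimes\Omega^1_X$. Since $W/W'$ has rank one, the instability terms in \eqref{4.22} equal ${\rm I}({\rm T}^\ell(\Omega^1_X))=0$ by the previous step, while $\mu(W')-\mu(W/W')\le 0$ (resp. $<0$) because $W$ is semistable (resp. stable), and the coefficient $({\rm rk}(F_*W)-{\rm rk}(\sE))/(2p\cdot{\rm rk}(\sE))$ is positive; hence $\mu(\sE)\le\mu(F_*W)$, strictly when $\Omega^1_X$ is stable. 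This case is essentially formal.

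The remaining case $m\ne n(p-1)$ carries the real content. From \eqref{4.10}, and using $r_0\ge1$ (the map $F^*\sE\to W$ adjoint to the inclusion is nonzero), one gets
$$\mu(\sE)-\mu(F_*W)\le\frac{1}{p\cdot{\rm rk}(\sE)}\Big(\sum_{\ell=0}^m r_\ell\big(\mu(\sF_\ell)-\mu(V_\ell/V_{\ell+1})\big)-(p-1)\mu(\Omega^1_X)\Big),$$
so it suffices to prove $\sum_\ell r_\ell\big(\mu(\sF_\ell)-\mu(V_\ell/V_{\ell+1})\big)\le(p-1)\mu(\Omega^1_X)$, with strict inequality somewhere when $\Omega^1_X$ is stable. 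The obstacle is that one cannot simply bound $\mu(\sF_\ell)$ by $\mu_{\max}(V_\ell/V_{\ell+1})=\mu(V_\ell/V_{\ell+1})$: even though $\Omega^1_X$ and ${\rm T}^\ell(\Omega^1_X)$ are semistable, the tensor product $\Omega^1_X\otimes{\rm T}^\ell(\Omega^1_X)$ need not be — indeed, using ${\rm T}^p(\Omega^1_X)\cong{\rm Sym}^{p-2}(\Omega^1_X)\otimes K_X$ and the Koszul sequence $0\to K_X\otimes{\rm Sym}^{p-2}\Omega^1_X\to\Omega^1_X\otimes{\rm Sym}^{p-1}\Omega^1_X\to{\rm Sym}^p\Omega^1_X\to 0$ one sees $\Omega^1_X\otimes{\rm T}^{p-1}(\Omega^1_X)$ is unstable as soon as $F^*\Omega^1_X$ is. The way around this — and the step I would need to carry out with care — is to exploit that the $\sF_\ell=(V_\ell\cap F^*\sE)/(V_{\ell+1}\cap F^*\sE)$ are not arbitrary subsheaves of $V_\ell/V_{\ell+1}$: in the local picture of the proof of Theorem \ref{thm4.9} they form a graded ${\rm D}$‑submodule of $W\otimes\sO_X[\alpha_1,\alpha_2]/(\alpha_1^p,\alpha_2^p)$, equivalently $\nabla$ induces injections $\sF_\ell\hookrightarrow\sF_{\ell-1}\otimes\Omega^1_X$ for all $\ell$, so $\sF_\ell\hookrightarrow\sF_0\otimes(\Omega^1_X)^{\otimes\ell}\subseteq\sL\otimes(\Omega^1_X)^{\otimes(\ell+1)}$. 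Combining these injections with the presentations of Theorem \ref{thm4.3}(ii) — which exhibit $V_\ell/V_{\ell+1}$ as successive extensions of line‑bundle twists of ${\rm Sym}^j\Omega^1_X$ ($j\le p-1$) and of $F^*\Omega^1_X$ — one should get that a connection‑compatible family $\sF_\bullet$ cannot meet the $F^*\Omega^1_X$‑part that carries the instability, giving $\mu(\sF_\ell)\le\mu(V_\ell/V_{\ell+1})$ for the relevant $\ell$ and hence the required bound; for stability one checks that equality forces $\sF_\ell=V_\ell/V_{\ell+1}$ for all $\ell$ and ${\rm rk}(\sE)\in p\,\Z$, which is impossible for a proper subsheaf once $\Omega^1_X$ is stable. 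I expect this last point — matching the gain from the ${\rm D}$‑module constraint against the failure of semistability of $\Omega^1_X\otimes{\rm T}^{p-1}(\Omega^1_X)$ — to be the main obstacle.
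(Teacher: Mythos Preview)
Your setup and the case $m=2(p-1)$ are correct and match the paper. The gap is in the case $m\neq 2(p-1)$, where you correctly isolate the obstacle --- that $V_{p-1}/V_p=\sL\otimes\Omega^1_X\otimes{\rm Sym}^{p-1}(\Omega^1_X)$ need not be semistable --- but your proposed fix via the ${\rm D}$-module constraint does not go through as stated. The injections $\sF_\ell\hookrightarrow\sF_{\ell-1}\otimes\Omega^1_X$ give $\mu(\sF_{p-1})\le\mu_{\max}(\sF_{p-2}\otimes\Omega^1_X)$, but $\sF_{p-2}\subset W\otimes{\rm Sym}^{p-2}(\Omega^1_X)$ can have rank up to $2(p-1)$, so $\sF_{p-2}\otimes\Omega^1_X$ is again a tensor product outside the safe range and there is no reason its $\mu_{\max}$ is controlled. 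Nothing in the graded ${\rm D}$-module structure prevents $\sF_{p-1}$ from containing a rank-one piece of the $\sL\otimes F^*\Omega^1_X$ summand; you are essentially reformulating the problem rather than solving it, as you yourself flag.

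The paper's argument is different and much more direct. First, it invokes the theorem of Ilangovan--Mehta--Parameswaran (if $E_1,E_2$ are semistable with ${\rm rk}(E_1)+{\rm rk}(E_2)\le p+1$ then $E_1\otimes E_2$ is semistable) to show that $V_\ell/V_{\ell+1}=\sL\otimes\Omega^1_X\otimes{\rm T}^\ell(\Omega^1_X)$ is semistable for \emph{every} $\ell\neq p-1$, so all those terms in the sum are $\le 0$. Then for the single remaining term it proves directly that
\[
r_{p-1}\big(\mu(\sF_{p-1})-\mu(V_{p-1}/V_p)\big)\le\mu(\Omega^1_X),
\]
not by restricting $\sF_{p-1}$ via $\nabla$, but by filtering $V_{p-1}/V_p$ itself: the Koszul sequence you wrote, together with $0\to\sL\otimes F^*\Omega^1_X\to\sL\otimes{\rm Sym}^p(\Omega^1_X)\to\sL\otimes{\rm Sym}^{p-2}(\Omega^1_X)\otimes\omega_X\to 0$, exhibit $V_{p-1}/V_p$ as an iterated extension of two semistable pieces of slope $\mu(V_{p-1}/V_p)$ and the single rank-two piece $\sL\otimes F^*\Omega^1_X$. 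Chasing $\sF_{p-1}$ through these two sequences reduces the bound to $\mu(E_1)-\mu(\sL\otimes F^*\Omega^1_X)\le\mu(\Omega^1_X)$ for a rank-one $E_1\subset\sL\otimes F^*\Omega^1_X$, which is Proposition~\ref{prop3.12} (strict if $\Omega^1_X$ is stable). This yields $\mu(\sE)-\mu(F_*W)\le -\frac{(p-2)\mu(\Omega^1_X)}{p\cdot{\rm rk}(\sE)}$, strict when $\Omega^1_X$ is stable. The missing ingredients in your attempt are thus the IMP theorem (to localize the instability at a single $\ell$) and the second exact sequence (to cap that instability by $\mu(\Omega^1_X)$).
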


\begin{proof} When ${\rm dim}(X)=2$, we have (cf. Proposition 3.5 of \cite{Su})
$${\rm T}^{\ell}(\Omega^1_X)=\left\{
\begin{array}{llll}{\rm Sym}^{\ell}(\Omega^1_X)  &\mbox{when $\ell<p$}\\
{\rm Sym}^{2(p-1)-\ell}(\Omega^1_X)\otimes\omega_X^{\ell-(p-1)}
&\mbox{when $\ell\ge p$}
\end{array}\right.$$
where $\omega_X=\sO_X(K_X)$ is the canonical line bundle of $X$.
Thus ${\rm T}^{\ell}(\Omega^1_X)$ are semi-stable whenever
$\Omega^1_X$ is semi-stable.

For any nontrivial sub-sheaf $\sE\subset F_*(\sL\otimes\Omega^1_X)$,
consider the induced filtration $0\subset V_m\cap
F^*\sE\subset\,\cdots\,\subset V_1\cap F^*\sE\subset V_0\cap
F^*\sE=F^*\sE$ and $$\sF_{\ell}:=\frac{V_{\ell}\cap
F^*\sE}{V_{\ell+1}\cap F^*\sE}\subset\frac{V_{\ell}}{V_{\ell+1}},
\qquad r_{\ell}={\rm rk}(\sF_{\ell}).$$ If $m=2(p-1)$, by using
\eqref{4.22} for $W=\Omega^1_X$, we have
$$\mu(\sE)-\mu(F_*W)\le 0.$$
If $W=\Omega^1_X$ is stable, then $\mu(W')-\mu(W/W')<0$ in
\eqref{4.22} and $$\mu(\sE)-\mu(F_*W)<0.$$ If $m\neq 2(p-1)$, we
have
$$\mu(\sE)-\mu(F_*W)\le\sum^m_{\ell=0}r_{\ell}\frac{\mu(\sF_{\ell})
-\mu(\frac{V_{\ell}}{V_{\ell+1}})}{p\cdot{\rm
rk}(\sE)}-\frac{\mu(\Omega^1_X)}{p\cdot{\rm rk}(\sE)}\cdot(p-1).$$
On the other hand, by a theorem of Ilangovan-Mehta-Parameswaran (cf.
Section 6 of \cite{L} for the precise statement): If $E_1$, $E_2$
are semi-stable bundles with ${\rm rk}(E_1)+{\rm rk}(E_2)\le p+1$,
then $E_1\otimes E_2$ is semi-stable. We see that
$V_{\ell}/V_{\ell+1}=\sL\otimes\Omega^1_X\otimes{\rm
T}^{\ell}(\Omega^1_X)$ are semi-stable except that
$$V_{p-1}/V_p=\sL\otimes\Omega^1_X\otimes{\rm
Sym}^{p-1}(\Omega^1_X)$$ may not be semi-stable. Thus we have
$$\mu(\sE)-\mu(F_*W)\le r_{p-1}\frac{\mu(\sF_{p-1})
-\mu(\frac{V_{p-1}}{V_p})}{p\cdot{\rm
rk}(\sE)}-\frac{\mu(\Omega^1_X)}{p\cdot{\rm rk}(\sE)}\cdot(p-1).$$
If $r_{p-1}=0$, there is nothing to prove. If $r_{p-1}>0$, we will
prove $$r_{p-1}\cdot(\mu(\sF_{p-1}) -\mu(V_{p-1}/V_p))\le
\mu(\Omega^1_X)),$$ by using of the following two exact sequences
$$0\to {\rm Sym}^{p-2}(\Omega^1_X)\otimes \omega_X\otimes\sL\to
V_{p-1}/V_p\to {\rm Sym}^p(\Omega^1_X)\otimes\sL\to 0$$
$$0\to \sL\otimes F^*\Omega^1_X\to{\rm Sym}^p(\Omega^1_X)\otimes\sL\to
{\rm Sym}^{p-2}(\Omega^1_X)\otimes \omega_X\otimes\sL\to 0$$ where
all of the bundles have the same slope
$p\cdot\mu(\Omega^1_X)+c_1(\sL)\cdot{\rm H}$.

For $\sF_{p-1}\subset V_{p-1}/V_p$\,, the first exact sequence above
induces an exact sequence $0\to \sF'_{p-1}\to\sF_{p-1}\to
\sF''_{p-1}\to 0$, where
$$\sF'_{p-1}\subset {\rm Sym}^{p-2}(\Omega^1_X)\otimes
\omega_X\otimes\sL, \quad \sF''_{p-1}\subset{\rm
Sym}^p(\Omega^1_X)\otimes\sL.$$ If $\sF''_{p-1}$ is trivial, then we
are done since ${\rm Sym}^{p-2}(\Omega^1_X)\otimes
\omega_X\otimes\sL$ is semi-stable with slope $\mu(V_{p-1}/V_p)$. If
$\sF''_{p-1}\neq 0$, we claim $$r_{p-1}\cdot(\mu(\sF_{p-1})
-\mu(V_{p-1}/V_p))\le {\rm rk}(\sF''_{p-1})\cdot(\mu(\sF''_{p-1})
-\mu(V_{p-1}/V_p)).$$ Indeed, if $\sF'_{p-1}=0$, it is clear. If
$\sF'_{p-1}\neq 0$, we have
$$\mu(\sF_{p-1})=\frac{{\rm
rk}(\sF'_{p-1})}{r_{p-1}}\mu(\sF'_{p-1})+\frac{{\rm
rk}(\sF''_{p-1})}{r_{p-1}}\mu(\sF''_{p-1})$$ and $\mu(\sF'_{p-1})\le
\mu({\rm Sym}^{p-2}(\Omega^1_X)\otimes
\omega_X\otimes\sL)=\mu(V_{p-1}/V_p)$. Put all together, we have the
claimed inequality. Thus it is enough to show $${\rm
rk}(\sF''_{p-1})\cdot(\mu(\sF''_{p-1})
-\mu(V_{p-1}/V_p))\le\mu(\Omega^1_X).$$ The second exact sequence
induces an exact sequence $$0\to E_1\to\sF_{p-1}''\to E_2\to 0$$
where $E_1\subset \sL\otimes F^*\Omega^1_X$, $E_2\subset{\rm
Sym}^{p-2}(\Omega^1_X)\otimes \omega_X\otimes\sL$. If $E_1=0$, it is
clearly done since ${\rm Sym}^{p-2}(\Omega^1_X)\otimes
\omega_X\otimes\sL$ is semi-stable of slope $\mu(V_{p-1}/V_p)$. If
$E_1\neq 0$, by the same argument, we have
$${\rm rk}(\sF''_{p-1})\cdot(\mu(\sF''_{p-1}) -\mu(V_{p-1}/V_p))\le
{\rm rk}(E_1)(\mu(E_1)-\mu(\sL\otimes F^*\Omega^1_X)).$$ If ${\rm
rk}(E_1)=2$, then $E_1=\sL\otimes F^*\Omega^1_X$ and we clearly have
$${\rm rk}(E_1)(\mu(E_1)-\mu(\sL\otimes
F^*\Omega^1_X))=0<\mu(\Omega^1_X).$$ If ${\rm rk}(E_1)=1$, then
$\mu(E_1)-\mu(\sL\otimes F^*\Omega^1_X)\le
\mu_{\max}(\Omega^1_X)=\mu(\Omega^1_X)$ is a special case of
Proposition \ref{prop3.12}, and it is a strict inequality if
$\Omega^1_X$ is stable. To sum up, what we have proved for
$W=\sL\otimes\Omega^1_X$ is
$$\mu(\sE)-\mu(F_*W)\le\left\{
\begin{array}{llll} 0 &\mbox{when $m=2(p-1)$}\\-\frac{\mu(\Omega^1_X)}{p\cdot{\rm rk}(\sE)}\cdot(p-2)
&\mbox{when $m<2(p-1)$}
\end{array}\right.$$
which is a strict inequality if $\Omega^1_X$ is stable.
\end{proof}

\bibliographystyle{plain}

\renewcommand\refname{References}

\end{document}